\newtheorem{theorem}{Theorem}[section]
\newtheorem{lemma}[theorem]{Lemma}
\newtheorem{proposition}[theorem]{Proposition}
\newtheorem{definition}[theorem]{Definition}
\theoremstyle{definition}
\newtheorem{example}{Example}
\newtheorem{corollary}[theorem]{Corollary}
\newtheorem{remark}[theorem]{Remark}
\numberwithin{equation}{section}
\begin{document}
	\title[Domain variations via strict Faber-Krahn type inequality]{Domain variations of  the first eigenvalue via a strict Faber-Krahn type inequality}
	\author[Anoop]{T. V. Anoop}
	\address{T. V. Anoop, Department of Mathematics, Indian Institute of Technology Madras, Chennai 600 036, India}
	\email{anoop@iitm.ac.in}
	\author[Ashok]{K. Ashok Kumar}
	\address{K. Ashok Kumar, Department of Mathematics, Indian Institute of Technology Madras, Chennai 600 036, India}
	\email{s.r.asoku@gmail.com}
	\thanks{Data sharing is not applicable to this article as no data sets were generated or analysed during the current study.}
	\subjclass[2010]{Primary 35J92, 49Q10; Secondary 35M12, 47J10}
	\keywords{Faber-Krahn inequality, Polarizations, Zaremba problem for $p$-Laplacian, Obstacle problems, Strict monotonicity of first eigenvalue}
	\begin{abstract}
		For $d\geq 2$ and $\frac{2d+2}{d+2} < p < \infty $, we prove a strict Faber-Krahn type inequality for the first eigenvalue $\lambda _1(\Omega )$ of the $p$-Laplace operator on a bounded Lipschitz domain $\Omega \subset \mathbb{R}^d$ (with mixed boundary conditions) under the polarizations. We apply this inequality to the obstacle problems on the domains of the form $\Omega \setminus \mathscr{O}$, where $\mathscr{O}\subset \subset \Omega $ is an obstacle. Under some geometric assumptions on $\Omega $ and $\mathscr{O}$, we prove the strict monotonicity of $\lambda _1 (\Omega \setminus \mathscr{O})$ with respect to certain translations and rotations of $\mathscr{O}$ in $\Omega $.
	\end{abstract}
	\maketitle
	\section{Introduction}\label{intro}
	In 1877, Lord Rayleigh~\cite{Rayleigh} conjectured that `\emph{the disk is the only planar domain that minimizes the first Dirichlet eigenvalue of the Laplace operator among all planar domains of fixed area.}' Nearly after 45 years, this conjecture was proved by Faber~\cite{Faber} and Krahn~\cite{Krahn} for the planar domains (in 1923), and it is extended for higher dimensional domains by Krahn~\cite{Krahn19261} (in 1925).  This result is known as the Faber-Krahn inequality which is also available for the first Dirichlet eigenvalue of the $p$-Laplace operator $\Delta _p$, defined by $\Delta _p u={\rm div}(|\nabla u|^{p-2}\nabla u)$ with $p\in (1,\infty )$, see for example \cite[page 191]{Polya-Szego} and \cite[II.4]{Kawohl85}. For a domain $\Omega \subset \mathbb{R}^{d}$, the Faber-Krahn inequality states that
	\begin{align}\label{faber-krahn}
		\lambda _1 (\Omega ^*) \leq \lambda _1 (\Omega ),
	\end{align}  
	where $\lambda _1(D)$ denotes the first Dirichlet eigenvalue of the $p$-Laplace operator  on a domain $D$ and $\Omega ^*$ is the open ball centred at the origin in $\mathbb{R}^{d}$ with the same Lebesgue measure as that of $\Omega $. If $\Omega $ is a ball, then the equality holds in~\eqref{faber-krahn}. The question \emph{`for which domains the strict inequality holds in~\eqref{faber-krahn}?'} is addressed in~\cite{DanersKennedy, Alvino-Ferone-Trombetti, Bhatta, Kesavan88,Anisa1}.
	
	\par  Noting that $\Omega ^*$ is the Schwarz symmetrization of $\Omega $, the inequality \eqref{faber-krahn} asserts that the first Dirichlet eigenvalue decreases under the Schwarz symmetrization. Next, we see that a similar result easily holds under the polarization as well. The polarization is one of the simplest rearrangements on $\mathbb{R}^d$ that was first introduced for sets by Wolontis~\cite{Wolontis}, and for functions by Ahlfors~\cite{Ahlfors73} (for $d=2$) and Baernstein and Taylor~\cite{Baernstein76} (for $d\geq 2$). We refer to \cite{Solynin12,Anoop-Ash-Kesh,Weth2010,Brock04,NirjanUjjalGhosh21} for further reading on polarizations and their applications. Now, we define the polarization of measurable sets and functions with respect to an open affine-halfspace in $\mathbb{R}^d$. Let $H$ be an open affine-halfspace in $\mathbb{R}^d$ (called a \emph{polarizer}), and let $\sigma _H$ be the reflection with respect to the boundary $\partial H$ in $\mathbb{R}^d$. We denote the set of all polarizers in $\mathbb{R}^d$ by $\mathscr{H}$. 
	\begin{definition}[Polarization]\label{defn-Polarization}
		Let $H\in \mathscr{H}$ and $\Omega \subseteq \mathbb{R}^d$. The polarization $P_H(\Omega )$ and the dual-polarization $P^H(\Omega )$ of $\Omega $ with respect to $H$ are defined as:
	\begin{align*}
		P_H(\Omega )&=\left[\left(\Omega \cup \sigma _H(\Omega )\right)\cap H \right]\cup \left[\Omega  \cap \sigma _H(\Omega )\right], \\
		P^H(\Omega )&=\left[\left(\Omega \cup \sigma _H(\Omega )\right)\cap H^\mathsf{c} \right] \cup \left[\Omega  \cap \sigma _H(\Omega )\right].
	\end{align*}
	For a measurable function $u:\mathbb{R}^d\longrightarrow \mathbb{R}$, the polarization $P_H(u)$ with respect to $H$ is defined as
	\begin{align*}
		P_H(u)(x) =
		\left\{
		\begin{aligned}
			&\max \left\{u(x),u(\sigma _H(x))\right\},&\mbox{ for } & x\in H,\\
			&\min \left\{u(x),u(\sigma _H(x))\right\},&\mbox{ for } & x\in \mathbb{R}^d\setminus H.
		\end{aligned}
		\right.
	\end{align*}
	Now, for $u:\Omega \longrightarrow \mathbb{R}$ let $\widetilde{u}$ be the zero extension of $u$ to $\mathbb{R}^d$. The polarization $P_H(u)$ is defined as the restriction of $P_H(\widetilde{u})$ to $P_H(\Omega )$.
	\end{definition}
	\begin{remark}
	The polarization of the sets and the functions satisfy the following relation:
	\begin{align*}
	    P_H(\mathbb{1}_{\Omega }) = \mathbb{1}_{P_H(\Omega )}, \mbox{ for any } \Omega \subseteq \mathbb{R}^d,
	\end{align*}
	where $\mathbb{1}_\Omega $ denotes the characteristic function of $\Omega $.
	\end{remark}
	In Figure~\ref{Figure-pol}, the dark shaded regions on the right side represent the polarization $P_H (\Omega )$ of $\Omega $ with respect to $H$. 
	\begin{figure}[hbt!]
		\centering
		\begin{tikzpicture}[scale=0.2]
			\begin{scope}[xshift=-36cm]
				\fill [gray!15] (-10, 0) rectangle (10,9);
				\draw [rotate around={-60:(1,0)}, fill=gray!75] (0, 0) ellipse (8cm and 4cm);
				\draw (-10,0) -- (10,0);
				\draw (-10,5) node[anchor=north west] {$\scriptstyle H$};
				\draw (1.75,-3) node[anchor=north west] {$\scriptstyle \Omega $};
			\end{scope}
			\fill [gray!15] (-10, 0) rectangle (10,9);
			\draw [rotate around = {60:(1,0)},fill=gray!75] (0, 0) ellipse (8cm and 4cm);
			\draw [rotate around = {-60:(1,0)}, fill=gray!75] (0, 0) ellipse (8cm and 4cm);
			\draw (-10,0) -- (10,0);
			\draw (-10,4) node[anchor=north west] {$\scriptstyle H$};
			\draw (-5.3,-4.5) node[anchor=north west] {$\scriptstyle \sigma_H(\Omega )$};
			\draw (2.5,-3.5) node[anchor=north west] {$\scriptstyle \Omega $};
			\begin{scope}
				\clip [rotate around = {60:(1,0)}] (0, 0) ellipse (8cm and 4cm);
				\clip [rotate around = {-60:(1,0)}] (0, 0) ellipse (8cm and 4cm);
				\fill [color=gray!130] (-8,8) rectangle (8,-8);
			\end{scope}
			\begin{scope}
				\clip (10,0) rectangle (-10,10);
				\clip [rotate around = {60:(1,0)}] (0,0) ellipse (8cm and 4cm);
				\fill [color=gray!130] (-8,8) rectangle (8,-8);
			\end{scope}
			\begin{scope}
				\clip (10,0) rectangle (-10,10);
				\clip [rotate around = {-60:(1,0)}] (0,0) ellipse (8cm and 4cm);
				\fill [color=gray!130] (-8,8) rectangle (8,-8);
			\end{scope}
			\draw (-2,3) node[anchor=north west] {$\scriptstyle P_H(\Omega )$};
			\begin{scope}[xshift=-36cm,yshift=-22cm]
				\fill [gray!15,rotate=22.5] (-10, 0) rectangle (10,8.5);
				\draw [fill=gray!75] (-6,-6) rectangle (6,6);
				\draw [rotate=22.5] (-10,0) -- (10,0);
				\draw (-11,2.5) node[anchor=north west] {$\scriptstyle H$};
				\draw (-8.5,-4) node[anchor =north west] {$\scriptstyle \Omega $};
			\end{scope}
			\begin{scope}[yshift=-22cm]
				\fill [gray!15,rotate=22.5] (-10, 0) rectangle (10,8.5);
				\draw [fill=gray!75] (-6,-6) rectangle (6,6);
				\draw [rotate=22.5] (-10,0) -- (10,0);
				\draw [fill=gray!75,rotate=45] (-6,-6) rectangle (6,6);
				\draw (-11,2.5) node[anchor=north west] {$\scriptstyle H$};
				\draw (-8.5,-4) node[anchor =north west] {$\scriptstyle \Omega $};
				\draw (-2,-8) node[anchor=north west] {$\scriptstyle \sigma _H(\Omega )$};
				\begin{scope}
					\clip (-6,-6) rectangle (6,6);
					\clip [rotate=22.5] (-10,0) rectangle (10,10);
					\fill [color=gray!130] (-10,-10) rectangle (10,10);
				\end{scope}
				\begin{scope}
					\clip [rotate=45](-6,-6) rectangle (6,6);
					\clip [rotate=22.5](-10,0) rectangle (10,10);
					\fill [color=gray!130] (-10,-10) rectangle (10,10);
				\end{scope}
				\begin{scope}
					\clip [rotate=45](-6,-6) rectangle (6,6);
					\clip (-6,-6) rectangle (6,6);
					\fill [color=gray!130] (-8,-8) rectangle (8,8);
				\end{scope}
				\draw (0,0) node[anchor=south] {$\scriptstyle P_H(\Omega )$};
			\end{scope}
		\end{tikzpicture}
		\caption{Polarization of an ellipse and a square.}
		\label{Figure-pol}
	\end{figure}
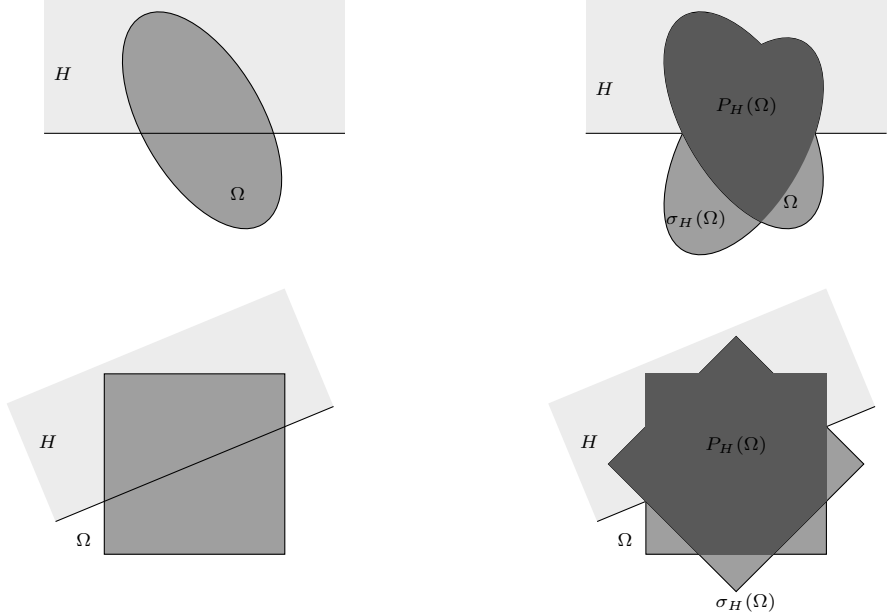 
	
	\noindent For $H\in \mathscr{H}$, the polarization $P_H$ is a rearrangement (preserves the inclusion order and the measure) on $\mathbb{R}^d$, see~\cite[Section~3]{BrockSolynin2000}. Further, $P_H$ takes an open set to an open set and a closed set to a closed set in $\mathbb{R}^d$. Throughout this article, we consider $p\in (1,\infty )$, unless otherwise specified. For a non-negative function $u\in W^{1,p}_0(\Omega )$ the polarization $P_H(u)\in W^{1,p}_0(P_H(\Omega ))$ and the norms are preserved, see~\cite[Corollary 5.1]{BrockSolynin2000}:
	\begin{align*}
		\left\| u \right\|_{\scriptscriptstyle p,\Omega } = \left\|P_H(u)\right\|_{\scriptscriptstyle p, P_H\Omega } \mbox{ and } \left\|\nabla u \right\|_{\scriptscriptstyle p, \Omega } = \left\|\nabla P_H(u) \right\|_{\scriptscriptstyle p, P_H\Omega }.
	\end{align*}
	Therefore, we have the equality in the P\'olya-Sz\"ego type inequality for the polarizations on $\mathbb{R}^d$. As an immediate consequence, the variational characterization of $\lambda _1(\Omega )$ yields the following Faber-Krahn type inequality: 
	\begin{align}\label{faber-krahn_pol}
		\lambda _{1}(P_H(\Omega ))\leq \lambda _{1}(\Omega ). 
	\end{align}
	Clearly, if $P_H(\Omega )=\Omega $ or $P_H(\Omega )=\sigma _H (\Omega )$ then the equality holds in~\eqref{faber-krahn_pol}. In this article, we identify the domains for which the strict inequality holds in~\eqref{faber-krahn_pol} for certain values of $p$. More precisely, we show that, if $p>\frac{2d+2}{d+2}$ and the equality holds in~\eqref{faber-krahn_pol} then $P_H(\Omega )=\Omega $ or $P_H(\Omega ) = \sigma _H (\Omega )$. We prove this result for the first eigenvalue of the $p$-Laplace operator with mixed boundary conditions on the multiply connected domains of the following form: 
	\vspace{2mm}
	\begin{enumerate}[label={$\bf (A_{\arabic*})$}]
		\setcounter{enumi}{-1}
		\item \label{hypothesis} $\Omega_{\rm out} \setminus \overline{\Omega _{\rm in}}\subset \mathbb{R}^d$ is a bounded Lipschitz domain with $\Omega_{\rm in} \subset \subset \Omega_{\rm out},$ and $\Omega _{\rm in} = \bigcup \limits_{j=1}^m \Omega _j,$ where $\Omega _j$ is simply connected and $\overline{\Omega _i} \cap \overline{\Omega _j}=\emptyset $ for $i, j\in \{1,2,\ldots , m\}$ with $i\neq j$. 
	\end{enumerate}
	\vspace{1mm}
	For $\Omega_{\rm out} \setminus \overline{\Omega _{\rm in}}$ as in~\ref{hypothesis}, we consider the following family of admissible polarizers
	\begin{align*}
	    \mathscr{H}_{\rm ad} :=\Big\{H\in \mathscr{H} : \sigma _H(\Omega _{\rm in}) \subset \subset \Omega _{\rm out}\Big\}.
	\end{align*}
	Since $\Omega _{\rm in} \subset \subset \Omega _{\rm out}$, the set $\mathscr{H}_{\rm ad}$ is always non-empty, and for $H\in \mathscr{H}_{\rm ad}$ we have (see Proposition~\ref{propo:pol-compliment1}):
	\begin{align*}
		P_H(\Omega_{\rm out} \setminus \overline{\Omega _{\rm in}}) = P_H(\Omega _{\rm out})\setminus P^H(\overline{\Omega _{\rm in}}) \mbox{ and } \partial P_H(\Omega_{\rm out} \setminus \overline{\Omega _{\rm in}}) = \partial P_H(\Omega _{\rm out}) \sqcup \partial P^H(\Omega _{\rm in}).
	\end{align*} 
	For $H\in \mathscr{H}_{\rm ad}$, we consider the following eigenvalue problems for the $p$-Laplace operator on both $\Omega $ and $P_H(\Omega )$ with mixed boundary conditions: 
	\vspace{0.5cm}
	
	\noindent
	\textbf{Neumann condition on $\partial \Omega _{\rm in}$:} 
	
	\noindent
	\begin{minipage}[t]{0.49\textwidth}
		\begin{align}\label{eigen1}
			\left.
			\begin{aligned}
				-\Delta _p u &=\nu |u|^{p-2}u \mbox{ in } \Omega_{\rm out} \setminus \overline{\Omega _{\rm in}},\\
				u &=0 \mbox{ on } \partial \Omega _{\rm out},\\
				\frac{\partial u}{\partial n} &=0 \mbox{ on } \partial \Omega _{\rm in};
			\end{aligned}
			\right\}
		\end{align}
	\end{minipage}
	\begin{minipage}[t]{0.49\textwidth}
		\begin{align}\label{eigen1-1}
			\left.
			\begin{aligned}
				-\Delta _p v &=\nu |v|^{p-2}v \mbox{ in } P_H(\Omega_{\rm out} \setminus \overline{\Omega _{\rm in}}),\\
				v & =0 \mbox{ on } \partial P_H(\Omega _{\rm out}),\\
				\frac{\partial v}{\partial n} &=0 \mbox{ on } \partial P^H(\Omega _{\rm in});
			\end{aligned}
			\right\}
		\end{align}
	\end{minipage}
	\vspace{0.5cm}
	
	\noindent
	\textbf{Neumann condition on $\partial \Omega _{\rm out}$:}
	
	\noindent
	\begin{minipage}[t]{0.49\textwidth}
		\begin{align}\label{eigen2}
			\left.
			\begin{aligned}
				-\Delta _p u &=\tau |u|^{p-2}u \mbox{ in } \Omega_{\rm out} \setminus \overline{\Omega _{\rm in}},\\
				u &=0 \mbox{ on } \partial \Omega _{\rm in},\\
				\frac{\partial u}{\partial n} &=0 \mbox{ on } \partial \Omega _{\rm out};
			\end{aligned}
			\right\}
		\end{align}
	\end{minipage}
	\begin{minipage}[t]{0.49\textwidth}
		\begin{align}
			\left.
			\begin{aligned}\label{eigen2-1}
				-\Delta _p v &=\tau |v|^{p-2}v \mbox{ in } P_H(\Omega_{\rm out} \setminus \overline{\Omega _{\rm in}}),\\
				v &=0 \mbox{ on } \partial P^H(\Omega _{\rm in}),\\
				\frac{\partial v}{\partial n} &=0 \mbox{ on } \partial P_H(\Omega _{\rm out}),
			\end{aligned}
			\right\}
		\end{align}
	\end{minipage} 
	
	\noindent where $\nu , \tau \in \mathbb{R}.$
	\vspace{2mm}
	
	\par The above eigenvalue problems can be collectively expressed as the following problem: 
	\begin{align}\label{eqn_intro_eigenZaremba1}\tag{$\mathcal{E}$}
		\left.
		\begin{aligned}
			-\Delta _p u &=\gamma |u|^{p-2}u \mbox{ in } \Omega ,\\
			u &=0 \mbox{ on } \Gamma _{\scriptscriptstyle D},\\
			\frac{\partial u}{\partial n} &=0 \mbox{ on } \Gamma _{\scriptscriptstyle N},
		\end{aligned}
		\right\}
	\end{align} 
	where $\Omega \subset \mathbb{R}^d$ is a bounded Lipschitz domain with $\partial \Omega = \Gamma _{\scriptscriptstyle N}\sqcup \Gamma _{\scriptscriptstyle D}$,  and $\gamma \in \mathbb{R}$. Let
	\begin{align*}
	    \mathcal{C}^{0,1}_{\scriptscriptstyle \Gamma _{D}}(\Omega ) \mathrel{\mathop:}= \Big\{v \mbox{ is a Lipschitz continuous function on }\Omega \mbox{ with } {\rm supp} \left(v\right)\cap \Gamma _{\scriptscriptstyle D}=\emptyset \Big\},
	\end{align*}
	and we consider the following Sobolev space:
	\begin{align*}
		W^{1,p}_{\scriptscriptstyle \Gamma _{\scriptscriptstyle D}}(\Omega )= \mbox{ the closure of } \mathcal{C}^{0,1}_{\scriptscriptstyle \Gamma _{\scriptscriptstyle D}}(\Omega ) \mbox{ in } W^{1,p}(\Omega ).
	\end{align*} 
	If $\Gamma _{\scriptscriptstyle N}=\emptyset $ (equivalently $\Gamma _{\scriptscriptstyle D}=\partial \Omega $) then $W^{1,p}_{\scriptscriptstyle \Gamma _{\scriptscriptstyle D}}(\Omega ) = W^{1,p}_0(\Omega )$. A real number $\gamma $ is said to be an eigenvalue of~\eqref{eqn_intro_eigenZaremba1} if there exists a non-zero function $u\in W^{1,p}_{\scriptscriptstyle \Gamma _{\scriptscriptstyle D}}(\Omega )$ such that
	\begin{align*}
		\int_{\Omega } |\nabla u|^{p-2}\nabla u \cdot \nabla v {\,\rm d}x -\gamma \int_{\Omega } |u|^{p-2}uv {\,\rm d}x =0 \mbox{ for every } v\in W^{1,p}_{\scriptscriptstyle \Gamma _{\scriptscriptstyle D}} (\Omega ),
	\end{align*}
	and the function $u$ is called as an eigenfunction corresponding to the eigenvalue $\gamma $. The standard variational arguments establish the existence of an infinite subset of eigenvalues tending to infinity (see~\cite[Proposition~A.1]{AAK}). The first eigenvalue $\gamma _{1} (\Omega )$ of~\eqref{eqn_intro_eigenZaremba1} is simple (the dimension of the eigenspace is one) and the corresponding eignfunctions have constant sign in $\Omega $ (see~\cite[Proposition~A.2]{AAK}). Moreover, the first eigenvalue $\gamma _{1} (\Omega )$ of~\eqref{eqn_intro_eigenZaremba1} has the following variational characterization:
	\begin{align*}
		\gamma _{1}(\Omega )=\inf \left\{\int _{\Omega }|\nabla u|^p {\,\rm d}x : u\in W^{1,p}_{\Gamma _{\scriptscriptstyle D}}(\Omega ) \mbox{ with } \int _{\Omega } |u|^p {\,\rm d}x =1 \right\}.
	\end{align*}
	Now, we state a Faber-Krahn type inequality for the first eigenvalues of the eigenvalue problems~\eqref{eigen1}~and~\eqref{eigen1-1}, and similarly for the first eigenvalues of the eigenvalue problems~\eqref{eigen2}~and~\eqref{eigen2-1}.  
	\begin{theorem}\label{thm~1.2}
		Let $p\in (1,\infty )$, $\Omega_{\rm out} \setminus \overline{\Omega _{\rm in}} \subset \mathbb{R}^d$ be a domain as given in~\ref{hypothesis}, and $H\in \mathscr{H}_{\rm ad}$.
		\begin{enumerate}[label={\rm (\roman*)}]
			\item If $\sigma_{\scriptscriptstyle H}(\Omega _{\rm in})=\Omega _{\rm in}$, then
			\begin{align}\label{f-k_pol1}
				\nu _{1}(P_H(\Omega_{\rm out} \setminus \overline{\Omega _{\rm in}})) \leq \nu _{1}(\Omega_{\rm out} \setminus \overline{\Omega _{\rm in}}).
			\end{align}
			\item If $\Omega_{\rm in} \neq \emptyset$ and $\sigma_{\scriptscriptstyle H}(\Omega _{\rm out})=\Omega _{\rm out}$, then
			\begin{align}\label{f-k_pol2}
				\tau _{1}(P_H(\Omega_{\rm out} \setminus \overline{\Omega _{\rm in}})) \leq \tau _{1}(\Omega_{\rm out} \setminus \overline{\Omega _{\rm in}}).
			\end{align}
			\item If $\frac{2d+2}{d+2}<p<\infty$, and the equality holds in~\eqref{f-k_pol1}~or~\eqref{f-k_pol2} then  
			\begin{align*}
				P_H(\Omega_{\rm out} \setminus \overline{\Omega _{\rm in}})=\Omega_{\rm out} \setminus \overline{\Omega _{\rm in}} \mbox{ or }   P_H(\Omega_{\rm out} \setminus \overline{\Omega _{\rm in}})=\sigma _{\scriptscriptstyle H}(\Omega_{\rm out} \setminus \overline{\Omega _{\rm in}}).
			\end{align*}
		\end{enumerate}
	\end{theorem}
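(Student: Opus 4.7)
For parts (i) and (ii) I would argue directly from the variational characterization of $\nu _1$ and $\tau _1$ combined with the polarization-invariance of the $L^p$- and Dirichlet-norms recalled in the excerpt. Concretely, let $u$ be a positive first eigenfunction of the relevant eigenvalue problem on $\Omega := \Omega_{\rm out}\setminus \overline{\Omega _{\rm in}}$, let $\widetilde u$ be its zero-extension to $\mathbb{R}^d$, and set $v := P_H(\widetilde u)|_{P_H\Omega }$. The symmetry assumption in each case, combined with Proposition~\ref{propo:pol-compliment1}, ensures that $P_H\Omega $ has the form $P_H(\Omega _{\rm out})\setminus \overline{\Omega _{\rm in}}$ in case (i) and $\Omega _{\rm out}\setminus \overline{P^H(\Omega _{\rm in})}$ in case (ii), and that $v$ vanishes on the correct Dirichlet portion of $\partial P_H\Omega $ (using $\widetilde u\equiv 0$ outside $\Omega _{\rm out}$ for (i), or $\widetilde u\equiv 0$ on $\overline{\Omega _{\rm in}}$ together with the symmetry of $\Omega _{\rm out}$ for (ii)). Hence $v \in W^{1,p}_{\Gamma _{\scriptscriptstyle D}}(P_H\Omega )$, and plugging $v$ into the Rayleigh quotient on $P_H\Omega $ together with the preservation of norms yields \eqref{f-k_pol1} or \eqref{f-k_pol2}.

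For part (iii), I will assume equality in \eqref{f-k_pol1}; the case of \eqref{f-k_pol2} will be entirely analogous. Then $v := P_H(u)$ realizes the infimum in the Rayleigh quotient on $P_H\Omega $, so it is itself a first eigenfunction of~\eqref{eigen1-1}. Simplicity of $\nu _1$ and positivity of its eigenfunctions give $v > 0$ on $P_H\Omega $. Setting $\widehat u := u\circ \sigma _H$, we obtain a first eigenfunction of the corresponding problem on $\sigma _H(\Omega )$ with the same eigenvalue. On the overlap region $A := \Omega \cap \sigma _H(\Omega )\cap H$ both $u$ and $\widehat u$ satisfy the same $p$-Laplacian eigenvalue equation with identical eigenvalue, whereas $v|_A = \max (u,\widehat u)$ must also satisfy that equation on $A$.

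The heart of the proof---and the step where the restriction $p > \frac{2d+2}{d+2}$ enters---is to establish the following dichotomy on $A$: either $u \geq \widehat u$ everywhere on $A$, or $u \leq \widehat u$ everywhere on $A$. The plan is to view the non-negative functions $v-u$ and $v-\widehat u$ on $A$ as weak solutions of a linearized problem and to invoke a strong comparison / unique-continuation principle for positive $p$-Laplacian eigenfunctions; this will force one of these differences to vanish identically on each connected component of $A$. The restriction on $p$ will supply the $C^{1,\alpha }$ regularity of Tolksdorf--DiBenedetto and a Hopf-type boundary-point estimate across $\partial H\cap A$ that together make such a comparison principle available for the degenerate/singular $p$-Laplace operator; this is the main obstacle and the reason the theorem restricts to $p > \frac{2d+2}{d+2}$.

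Once the dichotomy is in hand, the rest is essentially bookkeeping. If $u\geq \widehat u$ on $A$, then $P_H(u) = \widetilde u$ on $P_H\Omega $, which forces $P_H(\Omega )=\Omega $; if instead $u\leq \widehat u$ on $A$, then $P_H(u) = \widetilde u\circ \sigma _H$ on $P_H\Omega $ and $P_H(\Omega )=\sigma _H(\Omega )$. Either alternative yields the conclusion of (iii), so the verification reduces to the strong-comparison step above.
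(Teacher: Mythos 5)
Your treatment of parts (i) and (ii) matches the paper's: polarize a first eigenfunction, check it lands in the correct Sobolev space $W^{1,p}_{\Gamma_{\scriptscriptstyle D}}(P_H\Omega)$ using the symmetry hypothesis, and invoke the preserved Rayleigh quotient. That is exactly what Proposition~\ref{propo-pol-invariant} and the short argument in Section~\ref{FK type} do.

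For part (iii) you correctly identify the crucial ingredient -- a strong comparison principle for positive $p$-Laplacian solutions, valid for $p>\tfrac{2d+2}{d+2}$ (Proposition~\ref{SCP-p}) -- but the execution sketch has genuine gaps. First, you propose to apply the comparison principle on $A:=\Omega\cap\sigma_H(\Omega)\cap H$ (or its connected components). Sciunzi's strong comparison principle is stated for a bounded \emph{smooth} domain, and $A$ has no reason to be smooth; even its components may be ragged. The paper avoids this entirely by constructing a single small ball $B_0\subset\Omega\cap H$ and applying the comparison principle there. Second, the comparison principle only gives a per-component dichotomy, and you do not explain how to glue different components, or how to rule out the case where $u\ge\widehat u$ on some components and $u\le\widehat u$ on others. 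Third, the ``bookkeeping'' step is circular as written: the claim ``$u\ge\widehat u$ on $A$ implies $P_H(u)=\widetilde u$ on $P_H\Omega$'' is false unless $A_H=\Omega^{\mathsf c}\cap\sigma_H(\Omega)\cap H$ already has empty interior -- which is exactly equivalent to $P_H(\Omega)=\Omega$, the thing you are trying to prove. Breaking the circularity requires knowing that the interface $\overline{\Omega\cap H}\cap\overline{A_H}$ lies on the Dirichlet part of $\partial\Omega$ (Lemma~\ref{lemma~3.1}), so that $u$ (but not $\widehat u$) vanishes there, together with the connectedness of $P_H(\Omega)\cap H$ (Proposition~\ref{propo-PHdomain}).

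The paper's Lemma~\ref{lem:3.3} packages all three issues at once: using Lemma~\ref{lemma~3.1} it shows $\widetilde u$ extends continuously to $P_H(\Omega)\cap H$, then uses connectedness of $P_H(\Omega)\cap H$ to produce a boundary point $x_0$ of the open set $M_u=\{P_H(\widetilde u)>\widetilde u\}$ lying in $\Omega\cap H$, and finally chooses a small ball $B_0\subset\Omega\cap H$ about $x_0$. On $B_0$ both $u$ and $P_H(u)$ are genuine positive $C^1$ eigenfunctions (so the comparison principle applies on a smooth domain), and $B_0$ meets both $\{P_H(u)>u\}$ and $\{P_H(u)=u\}$, which contradicts the dichotomy the comparison principle furnishes. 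Your plan would need to be supplemented with exactly this localization and with the topological lemmas, so the ``bookkeeping'' is in fact the substantive part of the argument.
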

	\begin{remark}
		\noindent
		\begin{enumerate}[label=(\roman*)]
		    \item If $\Omega _{\rm in}=\emptyset ,$ in (i) then  $\nu _1 $ corresponds to the first  Dirichlet eigenvalue $\lambda _1$ and thus~\eqref{f-k_pol1} gives: for every $H\in \mathscr{H}_{\rm ad}$,
		\begin{align*}
		    \lambda _1(P_H(\Omega_{\rm out} \setminus \overline{\Omega _{\rm in}}))\leq \lambda _1(\Omega_{\rm out} \setminus \overline{\Omega _{\rm in}}).
		\end{align*}
		\vspace{1mm}
		\item If $\Omega _{\rm in}=\emptyset ,$ then $\tau _1(\Omega _{\rm out})= \tau _1 (P_H(\Omega _{\rm out}))=0,$ for every $H\in \mathscr{H}.$  This is the reason why we impose the condition $\Omega _{\rm in} \neq \emptyset$ in (ii). 
		\vspace{1mm}
		
		\item The symmetry assumptions in (i) and (ii) of Theorem~\ref{thm~1.2} ensure that $\Gamma _{\scriptscriptstyle N} \subseteq \partial P_H(\Omega )$ and hence the Neumann boundary is unaltered under such  polarizations. This fact is crucially used in our proof. Obtaining the same conclusions of Theorem~\ref{thm~1.2} without these additional symmetry assumptions seems to be a challenging problem.
		\end{enumerate}

			\end{remark}

	\subsection*{Application to the domain variations:}
	Next, we apply Theorem~\ref{thm~1.2} for the domains of the form $\Omega \setminus \mathscr{O} \subset \mathbb{R}^d$ to study the monotonicity of the first eigenvalue of~\eqref{eqn_intro_eigenZaremba1} on $\Omega \setminus \mathscr{O}$ under certain translations and rotations of $\mathscr{O}$ within $\Omega $. We assume the following:
	\begin{enumerate}[label={$\bf (A_{\arabic*})$}]
		\setcounter{enumi}{0}
	    \bigskip
	    \item \label{obstacle0}
	    $\mathscr{O} \subset \Omega $ is a closed set with nonempty interior such that $\Omega \setminus \mathscr{O}$ is a bounded Lipschitz domain.
	\end{enumerate}
	\bigskip 
	The set $\mathscr{O}$  in~\ref{obstacle0} is called as \emph{an obstacle}. The main idea is to express the translations and the rotations of $\mathscr{O}$ in terms of polarizations of punctured domain $\Omega \setminus \mathscr{O}$. Then we apply Theorem~\ref{thm~1.2}  and get the monotonicity of the eigenvalue. 
	
	\vspace{2mm}
	\noindent
	\textbf{The monotonicity along a straight line:} In this case, we set $\Omega _{\rm in}=\emptyset $ and $\Omega _{\rm out}=\Omega \setminus \mathscr{O}$ is a bounded Lipschitz domain in $\mathbb{R}^d$.  For a given $h\in \mathbb{S}^{d-1}$, we  study the monotonicity of the first Dirichlet eigenvalue of the $p$-Laplace operator with respect to the translations of the obstacle $\mathscr{O}$ in the $h$-direction within $\Omega $. Without loss of generality, we may assume that the origin $0\in \mathscr{O}$. We consider  the following family of polarizers:
	\begin{align}\label{fam-polarizers}
	    H_s =\bigl\{x\in \mathbb{R}^d: x\cdot h < s \bigr\}, \mbox{ for } s\in \mathbb{R}.
	\end{align}
	We make the following geometric assumption on $\Omega $ and $\mathscr{O}$:
	\vspace{1mm}
	\begin{enumerate}[label={$\bf (A_{\arabic*})$}]
		\setcounter{enumi}{1}
		\vspace{3mm}
		\item \label{ansatz1} $P_{H_0}(\Omega )=\Omega $, and $\mathscr{O}$ is Steiner symmetric with respect to the hyperplane $\partial H_{0}$ (see~Definition~\ref{defn-symm}). 
	\end{enumerate} 
	\vspace{3mm}
	The translations of $\mathscr{O}$ in the directions of $h$ are given by
	\begin{align}
	    \mathscr{O}_s= s h +\mathscr{O} \mbox{ for } s \in \mathbb{R}.
	\end{align}
	For $\Omega $ and $\mathscr{O}$ as given in~\ref{ansatz1}, define $\mathrm{L}_\mathscr{O}= \Big\{s \in \mathbb{R} : P_{\scriptscriptstyle H_s} (\Omega ) = \Omega \mbox{ and } \mathscr{O}_s \subset \Omega \Big\}.$ Let $\lambda _1(s)$ be the first eigenvalue of~\eqref{eigen1} with $\Omega _{\rm in}=\emptyset $ and $\Omega _{\rm out}= \Omega \setminus \mathscr{O}_s$ for $s\in \mathrm{L}_\mathscr{O}$. For $s \in \mathbb{R}$, let $\Sigma _s \mathrel{\mathop:}=\bigl\{x\in \Omega : x\cdot h >s \bigr\}$. A set $A\subseteq \mathbb{R}$ is said to be convex in the $h$-direction, if any line segment parallel to the $\mathbb{R}h$-axis with endpoints in $ A$ completely lies in $A$. Now, we have the following strict monotonicity result.
	\begin{theorem}\label{thm-mono}
		Let $\frac{2d+2}{d+2}<p<\infty $ and $h\in \mathbb{S}^{d-1}$. Assume that $\mathscr{O}, \Omega \subset \mathbb{R}^d$ satisfy \ref{obstacle0} and \ref{ansatz1}. If the set $\Sigma _{s_0} \bigcup \sigma _{\scriptscriptstyle H_{s_0}} \left(\Sigma _{s_0} \right)$ is convex in the $h$-direction for some $s_0 \in \mathrm{L}_\mathscr{O}$, then the set $\{s\in \mathrm{L}_\mathscr{O}:s\ge s_0\}$ is an interval and  $\lambda _1(\cdot )$ is strictly decreasing on this interval. 
	\end{theorem}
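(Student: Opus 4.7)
The plan is to realize the translation of $\mathscr{O}$ from position $s_1$ to position $s_2$ (with $s_0 \le s_1 < s_2$) as a polarization of $\Omega\setminus\mathscr{O}_{s_1}$ with respect to the orthogonal polarizer $H_t$, $t := (s_1+s_2)/2$, and then invoke Theorem~\ref{thm~1.2}. The central observation is that, because $\mathscr{O}$ is Steiner symmetric about $\partial H_0$, the translate $\mathscr{O}_{s_1}$ is Steiner symmetric about $\partial H_{s_1}$; in particular, on every line parallel to $h$ the slice of $\mathscr{O}_{s_1}$ is an interval centred at $\{x\cdot h = s_1\}$, and a line-by-line computation of the dual polarization gives $P^{H_t}(\mathscr{O}_{s_1}) = \mathscr{O}_{s_2}$. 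Combining this with the decomposition identity $P_{H_t}(\Omega \setminus \mathscr{O}_{s_1}) = P_{H_t}(\Omega) \setminus P^{H_t}(\mathscr{O}_{s_1})$ (applied with $\Omega_{\rm out} = \Omega$ and $\Omega_{\rm in} = \interior(\mathscr{O}_{s_1})$), together with $P_{H_t}(\Omega)=\Omega$ (to be justified below), one obtains $P_{H_t}(\Omega\setminus\mathscr{O}_{s_1}) = \Omega\setminus \mathscr{O}_{s_2}$.

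Next, I would establish the interval property of $\mathrm{L}_\mathscr{O}\cap[s_0,\infty)$. The convexity of $\Sigma_{s_0}\cup\sigma_{H_{s_0}}(\Sigma_{s_0})$ in the $h$-direction forces, on every line $\ell$ parallel to $h$ meeting $\Sigma_{s_0}$, the slice $\Sigma_{s_0}\cap \ell$ to be a single interval whose reflection about $\partial H_{s_0}$ lies in $\ell\cap \Omega$---the ``above'' and ``reflected above'' pieces assemble into one connected block symmetric about $\{x\cdot h = s_0\}$. Given $s_1,s_2\in \mathrm{L}_\mathscr{O}\cap[s_0,\infty)$, a line-by-line interpolation between $\mathscr{O}_{s_1},\mathscr{O}_{s_2}\subset\Omega$ and a direct check that the reflected slice $\sigma_{H_s}(\Sigma_s\cap \ell)$ sits inside $\ell\cap\Omega$ for every $s\in[s_0,s_2]$ yield both $\mathscr{O}_s\subset \Omega$ and $P_{H_s}(\Omega)=\Omega$; hence $s\in \mathrm{L}_\mathscr{O}$. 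In particular $t\in \mathrm{L}_\mathscr{O}$, and $\mathrm{L}_\mathscr{O}\cap[s_0,\infty)$ is an interval.

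With the polarization identity and the interval property in hand, Theorem~\ref{thm~1.2}(i), applied with $\Omega_{\rm in}=\emptyset$, $\Omega_{\rm out}=\Omega\setminus \mathscr{O}_{s_1}$ and polarizer $H_t$, yields $\lambda_1(s_2)\le\lambda_1(s_1)$. For the strict inequality I appeal to part~(iii): equality forces $P_{H_t}(\Omega\setminus\mathscr{O}_{s_1})$ to equal either $\Omega\setminus\mathscr{O}_{s_1}$---which is excluded because $\mathscr{O}_{s_1}\ne \mathscr{O}_{s_2}$---or $\sigma_{H_t}(\Omega\setminus\mathscr{O}_{s_1}) = \sigma_{H_t}(\Omega)\setminus\mathscr{O}_{s_2}$, which would force $\sigma_{H_t}(\Omega)=\Omega$. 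This last equality, together with the measure bound $|\Sigma_{s_0}|\le|\Omega\cap H_{s_0}|$ encoded by $P_{H_{s_0}}(\Omega)=\Omega$, would imply that $\Omega$ has Lebesgue measure zero in the open strip $\{s_0<x\cdot h< 2t-s_0\}=\{s_0<x\cdot h<s_2\}$; but $\mathscr{O}_{s_1}\subset\Omega$ has nonempty interior intersecting this strip (since $s_0\le s_1<s_2$), a contradiction. Hence $\lambda_1(s_2)<\lambda_1(s_1)$.

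The main obstacle is the second step: extracting from the abstract $h$-directional convexity assumption on $\Sigma_{s_0}\cup\sigma_{H_{s_0}}(\Sigma_{s_0})$ the concrete line-wise inclusions that propagate the polarization invariance $P_{H_s}(\Omega)=\Omega$ to all $s\in[s_0,s_2]$ and interpolate $\mathscr{O}_s\subset\Omega$, while handling the open/closed subtleties at the hyperplane $\partial H_s$ carefully.
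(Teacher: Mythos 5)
Your proposal reproduces the paper's proof: realize the translation from $\mathscr{O}_{s_1}$ to $\mathscr{O}_{s_2}$ as the polarization by $H_t$ with $t=(s_1+s_2)/2$ (the paper obtains $P^{H_t}(\mathscr{O}_{s_1})=\mathscr{O}_{s_2}$ by combining Propositions~\ref{Propo-Obs},~\ref{propo:char} and Lemma~\ref{lemma-4.1}, where you instead do a direct line-by-line computation), then feed the identity $P_{H_t}(\Omega\setminus\mathscr{O}_{s_1})=\Omega\setminus\mathscr{O}_{s_2}$ into Theorem~\ref{thm~1.2}; the convexity-based interval argument is likewise the one in the paper. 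Your explicit exclusion of the residual equality case $\sigma_{H_t}(\Omega)=\Omega$ is a welcome elaboration of a step the paper asserts without detail; note only the small slip that $2t-s_0=s_1+s_2-s_0$ equals $s_2$ only when $s_1=s_0$, which does not affect the argument since $\interior(\mathscr{O}_{s_1})$ meets the strip $\{s_0<x\cdot h<2t-s_0\}$ in any case.
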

	Throughout this article, for given $a\in \mathbb{R}^d$ and $r\geq 0$, we denote $B_r(a)=\left\{x\in \mathbb{R}^d:|x-a|<r\right\}$, the open ball centered at $a$ with the radius $r$, and the closure of $B_r(a)$ by $\overline{B}_r(a)$.
	\begin{remark}\label{rmk-1}
		In Theorem~\ref{thm-mono}, if $\Omega $ itself is convex in the $h$-direction, then $\mathrm{L}_\mathscr{O}$ is an interval containing $0$. In particular, if $\Omega =B_R(0)$, $\mathscr{O}=\overline{B}_r(0)$ for $0<R<r<\infty $ and $h=e_1=(1,0,\dots ,0)\in \mathbb{S}^{d-1}$. Then, both $\Omega $ and $\mathscr{O}$ are Steiner symmetric with respect to $\partial H_0$, and $\mathrm{L}_\mathscr{O}=[0,R-r)$. Therefore, by Theorem~\ref{thm-mono}, the first Dirichlet eigenvalue $\lambda _1 (B_R(0) \setminus \overline{B}_r(s e_1))$ is strictly decreasing for $s\in [0,R-r)$. Thus, Theorem~\ref{thm-mono} gives an alternate proof for many existing strict monotonicity results that were proved using the shape derivative (Hadamard perturbation) formula. For example, Kesavan~\cite{Kesavan} and Harrell-Kr\"oger-Kurata~\cite{HarrelKroger}, and  Anoop-Bobkov-Sasi~\cite{Anoop} for $p\in \left(\frac{2d+2}{d+2},\infty \right)$. 
	\end{remark}
	\begin{remark}
	    Due to the  symmetry restrictions on the Neumann boundary in Theorem~\ref{thm~1.2}, the monotonicity results (similar to that of Dirichlet eigenvalue in Remark~\ref{rmk-1}), when the Neumann boundary condition is specified on $\partial B_R(0)$ can not be deduced from Theorem~\ref{thm-mono}. However, such a monotonicity result is proved for $p=2$, by Anoop-Ashok-Kesavan~\cite{Anoop-Ash-Kesh} using the Hadamard perturbation formula and some geometric properties of the first eigenfunctions. This result is open for general $p\neq 2$.
	\end{remark}
	
	\noindent
	\textbf{The monotonicity with respect to the rotations about a point:} Next, we study the monotonicity of the first eigenvalue of~\eqref{eqn_intro_eigenZaremba1} on $\Omega \setminus \mathscr{O}$ with respect to the rotations of the obstacle $\mathscr{O}$ in $\Omega $ about a point $a\in \mathbb{R}^d$. We set $\mathbb{R}^+=[0, \infty )$, and make the following geometric assumptions on both $\Omega $ and $\mathscr{O}$:
	\begin{enumerate}[label={$\bf (A_{\arabic*})$}]
		\setcounter{enumi}{2}
		\item \label{assumption1} The domain $\Omega $ and the obstacle $\mathscr{O}$ are foliated Schwarz symmetric with respect to the ray $a+\mathbb{R}^+ \eta $, for some $\eta \in \mathbb{S}^{d-1}$ (see~Definition~\ref{defn-symm}).
	\end{enumerate}
	For $s\in [-1,1],$ let $\theta _s:=\arccos (s)\in [0,\pi ]$. For $\xi \in \mathbb{S}^{d-1}\setminus \{\eta \}$, let $R_{s, \xi }$ be the simple rotation on $\mathbb{R}^d$ with the plane of rotation is $X_\xi := \mbox{span} \left\{\eta,\xi \right\}$ and the angle of rotation is $\theta _s$ from the ray $\mathbb{R}^+\eta $  in the counter-clockwise direction. The rotation of the obstacle $\mathscr{O}$ by $R_{s, \xi }$ about the point $a$ is given by 
	\begin{align}\label{obstacle}
		\mathscr{O}_{s,\xi }= a+R_{s,\xi }(-a+\mathscr{O}).
	\end{align}
	Now, we observe the following facts (see Proposition~\ref{fssObs} and Lemma~\ref{lemma:fssObs}):
	\begin{enumerate}[label=(\alph*)]
	    \item The rotated obstacle $\mathscr{O}_{s,\xi }$ is foliated Schwarz symmetric with respect to the ray $a+ \mathbb{R}^+ R_{s,\xi}(\eta )$.
	    \item For any rotation $R$ that fixes $\eta $,  $\mathscr{O}=a+R(-a+\mathscr{O})$ and $\Omega = a+R(-a+\Omega )$. 
	    \item For any distinct $\xi _1, \xi _2 \in \mathbb{S}^{d-1}\setminus \{\eta \},$
	there exists $R$  that fixes $\eta$ such that 
	\begin{align*}
	   R(-a+\Omega\setminus \mathscr{O}_{s,\xi _1})&=-a+\Omega \setminus \mathscr{O}_{s,\xi_2}.
	\end{align*}
	\end{enumerate}
	From the above observations, it is evident that we only need to consider the rotations of the obstacle by $R_{s,\xi}$ with respect to $a$ in a   $X_\xi$-plane for a fixed $\xi  \in \mathbb{S}^{d-1}\setminus \{\eta \}.$ Thus for $s\in [-1,1],$ we set $\mathscr{O}_s=\mathscr{O}_{s,\xi}$ and  consider
	 \begin{align}
	    \begin{aligned}
	        \mathrm{C}_\mathscr{O} &:= \Bigl\{s \in [-1,1]: \mathscr{O}_{s}  \subset \Omega \Bigr\},\\
	       \gamma_{1} (s )  &:= \gamma_{1}(\Omega \setminus \mathscr{O}_{s}), \text{ the first eigenvalue of \eqref{eqn_intro_eigenZaremba1} on } \Omega \setminus \mathscr{O}_{s} \text{ for } s\in \mathrm{C}_\mathscr{O}.
	    \end{aligned}
	 \end{align}
	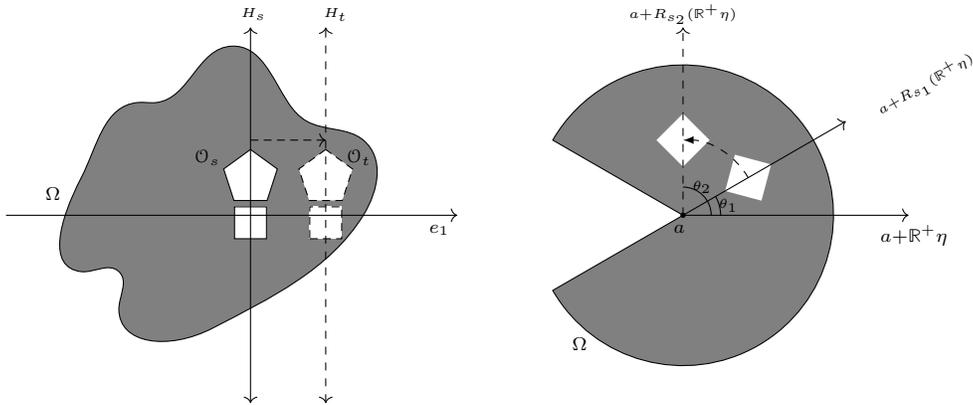
\begin{figure}[hbt!]
		\centering
		\begin{tikzpicture}[scale=0.5,use Hobby shortcut,closed=true]
			\draw [xshift=-1cm,fill=gray] (-3,1).. (-1.5,3).. (-1,3).. (1,4.5).. (2.5,3).. (3,2.5).. (4.5,2).. (4.5,0).. (0.5,-3).. (-2,-2.5).. (-2,-1.5).. (-3,-1.5).. (-3,1);
			\node [xshift=-0.5cm,fill=white!100,regular polygon, draw, regular polygon sides = 5, minimum size=0.75cm] (p) at (1.5,1) {};
			\node [xshift=-0.5cm,fill=white!100,regular polygon, draw, regular polygon sides = 4, minimum size=0.6cm] (p) at (1.5,-0.2) {};
			\node [xshift=-0.5cm,densely dashed,fill=white!100,regular polygon, draw, regular polygon sides = 5, minimum size=0.75cm] (p) at (3.5,1) {};
			\node [xshift=-0.5cm,densely dashed,fill=white!100,regular polygon, draw, regular polygon sides = 4, minimum size=0.6cm] (p) at (3.5,-0.2) {};
			\draw [xshift=-1cm,dashed,<->] (3.5,-5) -- (3.5,5);
			\draw [xshift=-1cm,<->] (1.5,-5) -- (1.5,5);
			\draw [xshift=-0.5cm,densely dashed,->] (1,2) -- (3,2);
			\draw [->] (-6,0) -- (6,0);
			\draw (-5.2,1) node[anchor =north west] {$\scriptstyle \Omega $};
			\draw (-1.25,2) node[anchor =north west] {$\scriptstyle \mathscr{O}_s$};
			\draw (2.8,2) node[anchor =north west] {$\scriptstyle \mathscr{O}_t$};
			\draw (5,0) node[anchor =north west] {$\scriptstyle e_1$};
			\draw (0,5.75) node[anchor = north west] {$\scriptscriptstyle H_s$};
			\draw (2.2,5.75) node[anchor =north west] {$\scriptscriptstyle H_t$};
			\begin{scope}[xshift=12cm]
				\draw [rotate around={210:(0,0)},fill=gray!100] (0,0) -- (4,0) arc[start angle=0, end angle=300,radius=4cm] -- (0,0);
				\node [rotate=75,regular polygon sides = 4, minimum size=0.5cm, fill=white, thick] (p) at (2*cos 30,2*sin 30) {};
				\node [rotate=135,regular polygon sides = 4, minimum size=0.5cm,fill=white, densely dotted] (p) at (2*cos 90,2*sin 90) {};
				\fill (0,0) circle[radius=2pt];
				\draw [dashed,-latex] (30:2) arc (30:90:2);
				\draw (0:1) arc (0:30:1);
				\draw (20:0.75) node[anchor = west ] {$\scriptscriptstyle \theta 	_1$};
				\draw (0:0.75) arc (0:90:0.75);
				\draw (90:0.75) node[anchor = west] {$\scriptscriptstyle \theta _2$};
				\draw [->] (0,0)--(6,0);
				\draw [dashed,->,rotate=90] (0,0)--(5,0);
				\draw [->,rotate=30] (0,0)--(5,0);
				\draw [rotate=90,dotted] (4,0) node[anchor =north, label={[label distance=0.5cm,text depth=-1ex,rotate=0] $\scriptscriptstyle a+R_{s_2} (\mathbb{R}^+ \eta )$}] {};
				\draw [rotate=18] (7.5,0) node[anchor =north, label={[label distance=0.5cm,text depth=-1ex,rotate=30] $\scriptscriptstyle a+ R_{s_1}(\mathbb{R}^+ \eta )$}] {};
				\draw (5,0) node[anchor =north west] {$\scriptstyle a+\mathbb{R}^+\eta $};
				\draw (-3.2,-3) node[anchor =north west] {$\scriptstyle \Omega $};
				\draw (-0.5,0) node[anchor =north west] {$\scriptstyle a$};
			\end{scope}
		\end{tikzpicture}
		\caption{The translations of $\mathscr{O}$ along the $e_1$-axis; and rotations of $\mathscr{O}$ about the point $a\in \mathbb{R}^d$, here $\theta _i =\arccos\left(s_i\right)$ for $i=1,2$ with $s_1>s_2$.}
	\end{figure}
	In this article, we consider the following  types of $\Omega $ and $\Gamma _{\scriptscriptstyle N} \subseteq \partial \Omega $: 
	\vspace{1mm}
	\begin{enumerate}[label={$\bf (A_{\arabic*})$}]
		\setcounter{enumi}{3}
		\item \label{choice-1} $\Omega =\Omega _0 \setminus \overline{B}_{\rho _0} (a)$, where $\overline{B}_{\rho _0}(a)\subsetneq \Omega _0 \subset \mathbb{R}^d$, $\rho _0\geq 0$; and $\Gamma _{\scriptscriptstyle N} = \partial B_{\rho _0}(a)$. 
		\vspace{1mm}
		\item \label{choice-2} $\Omega = B_R(a)\setminus \overline{\Omega _1}$, where $\overline{\Omega _1}\subset B_R(a)$, and $\Gamma _{\scriptscriptstyle N} = \partial B_R(a)$. 
	\end{enumerate}
	Now, we state  our monotonicity result for $\gamma_1(.)$ on $\mathrm{C}_\mathscr{O}$. 
	\begin{theorem}\label{thm-circ}
		Let $\frac{2d+2}{d+2}<p<\infty $ and $\Omega \subset \mathbb{R}^d$ be a domain. Assume that the pair $\Omega $ and $\Gamma _{\scriptscriptstyle N}$ satisfy either~\ref{choice-1} or~\ref{choice-2}. If $\Omega $ and $\mathscr{O}$  satisfy~\ref{obstacle0} and~\ref{assumption1} for some $a\in \mathbb{R}^d$ and $\eta\in \mathbb{S}^{d-1}$, then $\mathrm{C}_\mathscr{O}$ is an interval. In addition, if $\Omega$ is not radial with respect to $a$, then $\gamma _{1} (\cdot )$ is strictly increasing on $\mathrm{C}_\mathscr{O}$.
	\end{theorem}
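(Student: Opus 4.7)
The plan is to reduce the strict monotonicity of $\gamma_1(\cdot)$ on $\mathrm{C}_\mathscr{O}$ to a single application of Theorem~\ref{thm~1.2}, by choosing a polarizer $H$ whose associated polarization turns the domain with obstacle $\mathscr{O}_{s_2}$ into the domain with obstacle $\mathscr{O}_{s_1}$. For the interval claim, I exploit the foliated Schwarz symmetry: for each $r>0$, the sets $\Omega\cap\partial B_r(a)$ and $\mathscr{O}\cap\partial B_r(a)$ are spherical caps on $\partial B_r(a)$ with common pole $a+r\eta$ and some angular radii $\beta(r),\alpha(r)\in[0,\pi]$; since $R_{s,\xi}$ is an isometry fixing $a$, $\mathscr{O}_s\cap\partial B_r(a)$ is a cap of the same radius $\alpha(r)$ with pole $a+rR_{s,\xi}(\eta)$, at great-circle distance $\theta_s=\arccos(s)$ from $a+r\eta$. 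The spherical triangle inequality yields $\mathscr{O}_s\subset\Omega$ if and only if $\theta_s+\alpha(r)\leq\beta(r)$ for every $r$ with $\alpha(r)>0$, so $\mathrm{C}_\mathscr{O}=[\cos(\min(\theta^*,\pi)),1]$ with $\theta^*:=\inf\{\beta(r)-\alpha(r):\alpha(r)>0\}$, an interval.

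Fix $s_1<s_2$ in $\mathrm{C}_\mathscr{O}$ (so $\theta_{s_1}>\theta_{s_2}$) and set $\phi:=(\theta_{s_1}+\theta_{s_2})/2\in(0,\pi)$. Let $\sigma_H$ be the reflection fixing $a$ that acts on $X_\xi$ as reflection across the line through $a$ at angle $\phi$ from $\mathbb{R}^+\eta$ and as the identity on the orthogonal complement of $X_\xi$, and let $H$ be the open half-space bounded by $\partial H$ containing $\eta$. Then $\sigma_H$ swaps $R_{s_1,\xi}(\eta)$ and $R_{s_2,\xi}(\eta)$, and since each $\mathscr{O}_{s_i}$ is foliated Schwarz symmetric about its own axis with the same radial profile, $\sigma_H(\overline{\mathscr{O}_{s_2}})=\overline{\mathscr{O}_{s_1}}$. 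The key geometric lemma is the \emph{bisector cap identity}: for any spherical cap $C\subset\partial B_r(a)$ with pole $p$ and any half-space $H'$ with $a\in\partial H'$, one has $P_{H'}(C)=C$ when $p\in H'$, and $P_{H'}(C)=\sigma_{H'}(C)$ when $p\in(H')^{\mathsf{c}}$; both follow from the fact that $\partial H'\cap\partial B_r(a)$ is the perpendicular bisector of $p$ and $\sigma_{H'}(p)$, so any $x\in C\cap(H')^{\mathsf{c}}$ satisfies $\mathrm{dist}(x,\sigma_{H'}(p))\leq\mathrm{dist}(x,p)$, forcing $x\in\sigma_{H'}(C)$. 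Applied sphere-by-sphere, this yields $P_H(\Omega_0)=\Omega_0$ in case~\ref{choice-1} or $P_H(B_R(a))=B_R(a)$ in case~\ref{choice-2}, together with $P^H(\overline{\Omega_1})=\overline{\Omega_1}$ (the pole $-\eta$ of $\Omega_1$ lies in $H^{\mathsf{c}}$) and $P^H(\overline{\mathscr{O}_{s_2}})=\sigma_H(\overline{\mathscr{O}_{s_2}})=\overline{\mathscr{O}_{s_1}}$ (the pole of $\mathscr{O}_{s_2}$ lies in $H$). In case~\ref{choice-2}, the hypothesis $s_1\in\mathrm{C}_\mathscr{O}$ (equivalently $\alpha(r)+(\pi-\beta(r))\leq\pi-\theta_{s_1}$ for every $r$) ensures the caps of $\Omega_1$ and of $\mathscr{O}_{s_1}$ are disjoint on every sphere, killing the cross-reflection terms in the polarization formula and giving $P^H(\overline{\Omega_1}\cup\overline{\mathscr{O}_{s_2}})=\overline{\Omega_1}\cup\overline{\mathscr{O}_{s_1}}$. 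Combining these identities via Proposition~\ref{propo:pol-compliment1} delivers the crucial equality $P_H(\Omega\setminus\overline{\mathscr{O}_{s_2}})=\Omega\setminus\overline{\mathscr{O}_{s_1}}$.

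In case~\ref{choice-1}, taking $\Omega_{\rm out}=\Omega_0\setminus\overline{\mathscr{O}_{s_2}}$ and $\Omega_{\rm in}=B_{\rho_0}(a)$, Theorem~\ref{thm~1.2}(i) applies since $\sigma_H(B_{\rho_0}(a))=B_{\rho_0}(a)$; in case~\ref{choice-2}, taking $\Omega_{\rm out}=B_R(a)$ and $\Omega_{\rm in}=\Omega_1\cup\mathrm{int}(\mathscr{O}_{s_2})$, Theorem~\ref{thm~1.2}(ii) applies since $\sigma_H(B_R(a))=B_R(a)$. Either way, the preceding identity yields $\gamma_1(s_1)\leq\gamma_1(s_2)$. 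For strict monotonicity, suppose $\gamma_1(s_1)=\gamma_1(s_2)$ for some $s_1<s_2$; by the weak monotonicity just established, $\gamma_1$ is constant on $[s_1,s_2]$, so Theorem~\ref{thm~1.2}(iii), which requires $p>\tfrac{2d+2}{d+2}$, forces $\sigma_H(\Omega)=\Omega$ for the polarizer associated to every sub-pair $s<s'$ in $[s_1,s_2]$. As that pair varies, the angle $\phi=(\theta_s+\theta_{s'})/2$ traces a non-degenerate sub-interval, so $\Omega$ is invariant under a one-parameter continuous family of reflections whose products span a continuous group of rotations in the $X_\xi$-plane fixing $a$; combined with the foliated Schwarz symmetry of $\Omega$ about $a+\mathbb{R}^+\eta$, this forces $\Omega$ to be radial about $a$, contradicting the hypothesis. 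The main technical obstacle is verifying the polarization identity for the multi-component $\Omega_{\rm in}$ of case~\ref{choice-2}: ruling out cross-reflection terms such as $\overline{\Omega_1}\cap\sigma_H(\overline{\mathscr{O}_{s_2}})$ via the sphere-wise cap disjointness, which crucially uses $s_1\in\mathrm{C}_\mathscr{O}$.
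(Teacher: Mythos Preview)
Your overall strategy---choose the bisector polarizer $H$ so that $P_H(\Omega\setminus\mathscr{O}_{s_2})=\Omega\setminus\mathscr{O}_{s_1}$ and then invoke Theorem~\ref{thm~1.2}---is exactly the paper's (your ``bisector cap identity'' re-derives Lemma~\ref{lemma-4.2} and Proposition~\ref{propo:char}-\ref{char:fss}, and your interval computation is a concrete version of the paper's polarization-monotonicity argument). The gap is in the strict-monotonicity step. Theorem~\ref{thm~1.2}(iii) yields \emph{two} alternatives under equality: either $P_H(\Omega\setminus\mathscr{O}_{s'})=\Omega\setminus\mathscr{O}_{s'}$ or $P_H(\Omega\setminus\mathscr{O}_{s'})=\sigma_H(\Omega\setminus\mathscr{O}_{s'})$. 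You assert that equality ``forces $\sigma_H(\Omega)=\Omega$'', but that conclusion comes only from the second alternative; the first gives $\mathscr{O}_s=\mathscr{O}_{s'}$, which you never exclude. Your continuous-family-of-reflections argument is then built on an incomplete dichotomy, and in any case it is machinery the problem does not require.

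The paper's route is direct and uses a \emph{single} polarizer. For one pair $s<t$ with bisector $H$ (normal $h=R_s(\eta)-R_t(\eta)$), one checks $\eta\cdot h=s-t<0$, so the axis $a+\mathbb{R}^+\eta$ lies strictly in $H$, not on $\partial H$. A set foliated Schwarz symmetric about $a+\mathbb{R}^+\eta$ that is also $\sigma_H$-invariant must then be radial about $a$: on each $\partial B_r(a)$ the slice is a spherical cap whose pole $a+r\eta$ is moved by $\sigma_H$, and a cap invariant under a reflection that displaces its pole is either the full sphere or empty. Hence $\Omega$ non-radial already gives $\sigma_H(\Omega)\neq\Omega$, which rules out the second alternative for this one $H$; the first alternative ($\mathscr{O}_s=\mathscr{O}_t$, equivalently $\sigma_H(\mathscr{O}_t)=\mathscr{O}_t$) is dispatched by the same observation applied to $\mathscr{O}_t$, whose axis $a+\mathbb{R}^+R_t(\eta)$ also lies strictly in $H$. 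This delivers $\gamma_1(s)<\gamma_1(t)$ immediately, with no need to vary the pair or generate a rotation group.
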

	\begin{remark}
		If  $\Omega $ is radial with respect to the point $a$ (see Corollary~\ref{coro-radialsymm}), then the first eigenvalue $\gamma _{1}(\cdot )$ remains as a constant on $\mathrm{C}_\mathscr{O}$. 
	\end{remark}
	The rest of this article is organized as follows. In Section~\ref{Prelim}, the polarization of measurable sets and functions are introduced, and some of their important properties are discussed. Further, the characterizations of Steiner and foliated Schwarz symmetries using polarizations are given in Section~\ref{Prelim}. Also, we include a strong comparison principle and a few interior and boundary regularity results that are essential for the development of this article. A proof of Faber-Krahn inequality (Theorem~\ref{thm~1.2}) is given in Section~\ref{FK type}. The proofs of strict monotonicity results (Theorem~\ref{thm-mono} and~\ref{thm-circ}) are given in Section~\ref{MonoEigenvalue}. Many important remarks and explicit examples are included in Section~\ref{Remarks}.
	\section{Preliminaries}\label{Prelim}
	In this section, we discuss some of the important properties of the polarization of the sets and functions. Further, we give the definitions of Steiner and foliated Schwarz symmetries, and their characterizations in terms of polarizations. Lastly, we give some regularity results and strong comparison principles for the solutions of the $p$-Laplace operator. 
	\subsection{Polarization of sets}
	We discuss a few simple properties of the polarization of sets.
	\begin{proposition}\label{Propo-Obs}
		Let $H\in \mathscr{H}$ and $A, C \subseteq \mathbb{R}^d$. Then,
		\begin{enumerate}[label={\rm (\roman*)}]
			\item $P_H(A)$ is open, if $A$ is open; and $P_H(A)$ is closed if $A$ is closed;
			\item $P_H(A)\subseteq P_H(C)$, if $A\subseteq C$;
			\item $P_H(A\cap C)\subseteq P_H(A)\cap P_H(C)$ and $P_H(A) \cup P_H(C) \subseteq P_H(A\cup C)$;
			\item $P_H(\sigma _H(A))=P_H(A)$, $\sigma _H(P_H(A))={P}^H(A),$ and $\sigma _H({P}^H(A))=P_H(A)$;
			\item $P_H(A^\mathsf{c})=\left(P^H(A)\right)^\mathsf{c}$. 
		\end{enumerate}
	\end{proposition}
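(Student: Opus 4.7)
My plan is to derive all five items from a single pointwise description of the polarization. Writing $H' := \sigma_H(H)$ for the opposite open half-space and unfolding the definition gives: $x \in P_H(A)$ if and only if either $x \in H$ and $\{x,\sigma_H(x)\} \cap A \neq \emptyset$, or $x \in \mathbb{R}^d \setminus H$ and $\{x,\sigma_H(x)\} \subseteq A$. Because $\sigma_H$ fixes $\partial H$, the second case reduces to $x \in A$ there, and the dual $P^H(A)$ admits the same description with $H$ and $H'$ interchanged. Throughout I will use that $\sigma_H$ is a homeomorphic involution that commutes with unions, intersections, and complements, and sends open (respectively closed) sets to open (respectively closed) sets.

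For (i), openness of $P_H(A)$ when $A$ is open is verified case by case on the location of $x \in P_H(A)$: if $x \in H$, a sufficiently small ball around $x$ stays inside $H \cap (A \cup \sigma_H(A))$; if $x \in H'$, inside $H' \cap A \cap \sigma_H(A)$; and if $x \in \partial H$, a small ball around $x$ lies in the open set $A$ and, being symmetric under $\sigma_H$, in $A \cap \sigma_H(A)$. Closedness is proved dually by a sequential argument: taking $x_n \to x$ in $P_H(A)$, I pass to a subsequence lying entirely in $H$, $H'$, or $\partial H$ and invoke closedness of $A \cup \sigma_H(A)$ and $A \cap \sigma_H(A)$. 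Part (ii) is immediate because each of $A \mapsto A \cup \sigma_H(A)$ and $A \mapsto A \cap \sigma_H(A)$ is monotone in $A$, so the defining formula for $P_H$ is monotone. Part (iii) then follows from (ii) applied to the inclusions $A \cap C \subseteq A,\,C \subseteq A \cup C$.

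Part (iv) is a formal computation. The identity $P_H(\sigma_H(A)) = P_H(A)$ is immediate from $\sigma_H \circ \sigma_H = \mathrm{id}$. For $\sigma_H(P_H(A)) = P^H(A)$ I distribute $\sigma_H$ through the defining unions and intersections and use $\sigma_H(H) = H'$; the only point needing care is that the reflected formula naturally produces $H'$ rather than $H^\mathsf{c}$, so I must verify that the contributions on $\partial H$ agree, which follows from $A \cap \partial H = A \cap \sigma_H(A) \cap \partial H$. The third identity in (iv) is obtained by applying $\sigma_H$ to both sides of the second. Finally, (v) is a De Morgan calculation: I expand $P_H(A^\mathsf{c})$, convert the unions and intersections using De Morgan, and match the result with the similarly expanded $(P^H(A))^\mathsf{c}$. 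I do not expect any serious obstacle; the only mild care required is the bookkeeping on $\partial H$ that appears in (i) and (iv).
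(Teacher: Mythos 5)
Your proposal is correct and follows essentially the same route as the paper: everything is read off from the defining set-theoretic formulas for $P_H$ and $P^H$, together with the one nontrivial observation that $A\cap \partial H = \sigma_H(A)\cap \partial H$ because $\sigma_H$ fixes $\partial H$. The one place you do noticeably more work than necessary is part (i). Once you note that $P_H(A)=\bigl[(A\cup \sigma_H(A))\cap H\bigr]\cup \bigl[A\cap \sigma_H(A)\bigr]$, openness of $P_H(A)$ for open $A$ is immediate (a union of two open sets), with no case analysis on the location of $x$ required; and for closedness the $\partial H$ observation lets one rewrite $P_H(A)=\bigl[(A\cup \sigma_H(A))\cap \overline{H}\bigr]\cup \bigl[A\cap \sigma_H(A)\bigr]$, which is a union of two closed sets when $A$ is closed, avoiding the sequential subsequence argument entirely. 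This rewritten formula with $\overline{H}$ is in fact the device the paper records before saying that (i)--(iii) ``follow easily,'' and it also streamlines your $\partial H$ bookkeeping in (iv). Your treatments of (ii)--(v) match the paper's; in particular the De Morgan computation for (v) is exactly the paper's argument.
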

	\begin{proof}
		Recall that, for $H\in \mathscr{H}$, the polarizations of a set $A\subseteq \mathbb{R}^d$ are given by
		\begin{align*}
		    P_H(A)=\left[\left(A\cup \sigma _H(A)\right)\cap H\right]\cup \left[A\cap \sigma _H (A)\right], \mbox{ and }
		    P^H(A)=\left[\left(A\cup \sigma _H(A)\right)\cap H^\mathsf{c} \right] \cup \left[A\cap \sigma _H(A)\right].
	    \end{align*}
		Since $A\cap \partial H=\sigma _H(A)\cap \partial H$, we can also write
		\begin{align*}
		    P_H(A)=\left[\left(A\cup \sigma _H(A)\right)\cap \overline{H}\right]\cup \left[A\cap \sigma _H (A)\right] \mbox{ and } P^H(A)=\left[\left(A\cup \sigma _H(A)\right)\cap \overline{H}^\mathsf{c}\right]\cup \left[A\cap \sigma _H (A)\right].
		\end{align*}
		Now, (i)-(iii) follow easily from the above observations.\\ 
		(iv) This follows from the fact $\sigma _H(H)=\overline{H}^\mathsf{c}$ and the above observations.\\
		(v) By the definition, we have $ P_H(A^\mathsf{c})= \left[\left(A^\mathsf{c}\cup \sigma_H(A^\mathsf{c})\right)\cap H \right]\bigcup \left[A^\mathsf{c} \cap \sigma_H(A^\mathsf{c})\right],$ and hence
		\begin{align*}
			\left(P_H(A^\mathsf{c})\right)^\mathsf{c}	&= \left[\left(A \cap \sigma_H(A)\right)\cup H^\mathsf{c}\right] \cap \left[A\cup \sigma_H(A)\right] \\
			&=\left[A\cap \sigma_H(A)\right] \cup \left[\left(A\cup \sigma_H(A) \right) \cap H^\mathsf{c}\right] = P^H(A). \qedhere
		\end{align*}	
	\end{proof}
	The following proposition characterizes the invariance of a set under polarizations.  
	\begin{proposition}\label{propo:pol-equiv}
	    Let $H\in \mathscr{H}$ and $A\subseteq \mathbb{R}^d$. Then 
	    \begin{enumerate}[label=\rm (\roman*)]
	        \item $P_H (A)=A$ if and only if $\sigma _H(A) \cap H \subseteq A;$
	        \item $P^H (A)=A$ if and only if $\sigma _H(A) \cap H^c \subseteq A;$
	        \item $P_H (A)=P^H(A)$ if and only if $\sigma _H(A)=A.$
	    \end{enumerate}
	\end{proposition}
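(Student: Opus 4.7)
The strategy is to unpack the definitions of $P_H(A)$ and $P^H(A)$ and carry out straightforward set-theoretic manipulations, paying careful attention to the three disjoint regions $H$, $\partial H$, and $\sigma_H(H) = \overline{H}^{\mathsf{c}}$ into which $\mathbb{R}^d$ decomposes. The plan is to handle (i) directly, deduce (ii) by a symmetry appeal to the preceding proposition, and reserve the most care for (iii), where the boundary $\partial H$ (on which $\sigma_H$ acts as the identity) plays a non-trivial book-keeping role.

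For (i), the key observation is that $P_H(A) \cap H = (A \cup \sigma_H(A)) \cap H$, because the term $A \cap \sigma_H(A)$ is absorbed after restriction to $H$. Consequently, the assumption $P_H(A) = A$ immediately forces $(A \cup \sigma_H(A)) \cap H \subseteq A$, which is exactly the asserted condition. For the converse, I would verify $P_H(A) \subseteq A$ by writing $(A \cup \sigma_H(A)) \cap H = (A \cap H) \cup (\sigma_H(A) \cap H) \subseteq A$ using the hypothesis, and then establish $A \subseteq P_H(A)$ by a case split on $x \in A$: if $x \in H \cup \partial H$, the first component of $P_H(A)$ captures it; if $x \in \sigma_H(H)$, then $\sigma_H(x) \in \sigma_H(A) \cap H \subseteq A$, so $x \in A \cap \sigma_H(A)$, which is the second component.

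Part (ii) is completely parallel. It can be obtained either by directly repeating the argument of (i) with $H$ replaced by $H^{\mathsf{c}}$, or, more efficiently, by applying the reflection $\sigma_H$ to part (i) and invoking Proposition~\ref{Propo-Obs}(iv), which asserts $\sigma_H(P_H(A)) = P^H(A)$ and $\sigma_H(\sigma_H(A)) = A$.

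The most delicate part is (iii). The reverse implication is essentially immediate: if $\sigma_H(A) = A$, then $A \cup \sigma_H(A) = A \cap \sigma_H(A) = A$, so $P_H(A) = P^H(A) = A$. For the forward direction, intersecting both sides of $P_H(A) = P^H(A)$ with the open set $H$ yields $(A \cup \sigma_H(A)) \cap H = A \cap \sigma_H(A) \cap H$, which simultaneously forces $\sigma_H(A) \cap H \subseteq A$ and $A \cap H \subseteq \sigma_H(A)$. Applying $\sigma_H$ to the second inclusion produces $\sigma_H(A) \cap \overline{H}^{\mathsf{c}} \subseteq A$, while on $\partial H$ the identity $\sigma_H(A) \cap \partial H = A \cap \partial H$ holds trivially because $\sigma_H$ fixes $\partial H$ pointwise. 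Adding the three pieces gives $\sigma_H(A) \subseteq A$, and the reverse inclusion follows by symmetry (applying $\sigma_H$ once more). The only real obstacle throughout is making sure that each of the three regions $H$, $\partial H$, $\overline{H}^{\mathsf{c}}$ is accounted for exactly once, which is why keeping the definitions of $P_H$ and $P^H$ in their set-union form and restricting to half-spaces is the cleanest approach.
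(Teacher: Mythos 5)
Your proof is correct and follows essentially the same route as the paper's: unpack $P_H$ and $P^H$ into their set-union form, split $\mathbb{R}^d$ into $H$, $\partial H$, and $\overline{H}^{\mathsf{c}}$, and exploit $A \cap \partial H = \sigma_H(A) \cap \partial H$. One small slip in your converse for (i): a point $x \in A \cap \partial H$ is not picked up by the first component $(A \cup \sigma_H(A)) \cap H$ (since $H$ is open), but by the second component $A \cap \sigma_H(A)$ via $\sigma_H(x) = x$; the paper circumvents this by noting $P_H(A) = \bigl[(A \cup \sigma_H(A)) \cap \overline{H}\bigr] \cup \bigl[A \cap \sigma_H(A)\bigr]$. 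This is cosmetic. In (iii) you use only the $H$-half of the equality and bootstrap with $\sigma_H$, whereas the paper compares both the $H$- and $H^{\mathsf{c}}$-parts and adds them to get $A \cup \sigma_H(A) = A \cap \sigma_H(A)$ directly; both are equally valid.
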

	\begin{proof}
	    (i) From the definition of $P_H(A)$, it is clear that 
		\begin{equation}\label{def:PH3}
			P_H(A)=\left[\left(A\cup \sigma _H(A)\right)\cap H\right]\cup \left[A\cap \sigma _H (A)\cap {H}^\mathsf{c}\right].
		\end{equation}
		If $P_H(A)=A$, then $P_H(A)\cap H=A\cap H.$ Thus the above equation yields $\left(A\cup \sigma _H(A)\right)\cap H \subseteq A$, and hence we must have $\sigma _H(A) \cap H\subseteq A.$
		Conversely, assume that $\sigma _H(A)\cap H\subseteq A.$  Then, by applying $\sigma_H$ on both sides, and using the fact that $A \cap \partial H= \sigma_H(A) \cap \partial H,$ we obtain  $A\cap H^\mathsf{c} \subseteq \sigma _H(A),$   Therefore, $A\cap {H}^\mathsf{c}=A\cap \sigma_H (A)\cap {H}^\mathsf{c}.$ 
		From the assumption, we also have $A \cap{H}=\left[\left(A\cup  \sigma _H(A)\right)\cap {H} \right]$.  Now, using~\eqref{def:PH3}, we easily conclude  that $P_H(A)=A$.
		
		\noindent (ii) From Proposition~\ref{Propo-Obs}-(iv), we have $\sigma _H(P^H(A))=P_H(A)$ and $P_H(\sigma _H(A))=P_H(A)$. Therefore, we get $P^H(A)=A$ if and only if $P_H(\sigma _H(A))=\sigma _H(A).$ Now, from (i) we obtain
		\begin{align*}
		    P_H(\sigma _H(A))=\sigma _H(A) \mbox{ if and only if } A\cap H \subseteq \sigma _H(A).
		\end{align*}
		Now applying $\sigma _H$ on both sides of last inclusion and using the fact that $\sigma _H(A)\cap \partial H =A\cap \partial H$, we get
		\begin{align*}
		    P^H(A)=A \mbox{ if and only if } \sigma_H(A)\cap H^c \subseteq A.
		\end{align*}
		
		\noindent 
		(iii) From the definitions of $P_H(A)$ and $P^H(A)$, it is clear that 
		\begin{align*}
			P_H(A)=\left[\left(A\cup \sigma _H(A)\right)\cap H\right]\cup \left[A\cap \sigma _H (A)\cap {H}^\mathsf{c}\right],\\
			P^H(A)=\left[\left(A\cup \sigma _H(A)\right)\cap H^\mathsf{c}\right]\cup \left[A\cap \sigma _H (A)\cap H\right].
		\end{align*}
		If $P_H(A)=P^H(A)$, then $\left(A\cup \sigma _H(A)\right)\cap H = A\cap \sigma _H (A)\cap H$ and $A\cap \sigma _H (A)\cap {H}^\mathsf{c}=\left(A\cup \sigma _H(A)\right)\cap H^\mathsf{c}$. Therefore $A\cup \sigma _H(A)=A\cap \sigma _H(A)$, and hence $\sigma _H(A)=A$. Conversely, assume that $\sigma _H(A)=A$. Then, from above equations, we get $P_H(A)=P^H(A)=A$.
	\end{proof}
	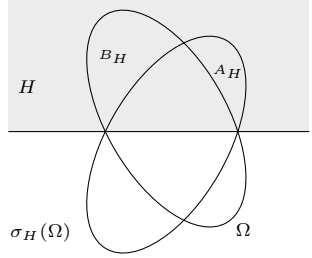
\begin{figure}[hbt!]
		\centering
		\begin{tikzpicture}[scale=0.2]
			\fill[gray!15] (-10, 0) rectangle (10,9);
			\draw[rotate around = {60:(1,0)}] (0, 0) ellipse (8cm and 4cm);
			\draw[rotate around = {-60:(1,0)}] (0, 0) ellipse (8cm and 4cm);
			\draw (-10,0) -- (10,0);
			\draw (-10,4) node[anchor=north west] {$\scriptstyle H$};
			\draw (-10.5,-5.5) node[anchor=north west] {$\scriptstyle \sigma _H(\Omega )$};
			\draw (4.5,-5.5) node[anchor=north west] {$\scriptstyle \Omega $};
			\draw (-3,5) node {$\scriptscriptstyle B_H$};
			\draw (4.6,4) node {$\scriptscriptstyle A_H$};
		\end{tikzpicture}
		\caption{The sets $A_H$ and $B_H$ of $P_H(\Omega )\cap H$.}
	\end{figure}
	\begin{proposition}\label{propo:pol-equiv1}
		Let $H\in \mathscr{H}$ and $\Omega \subseteq \mathbb{R}^d$ be an open set. Then,
		\begin{enumerate}[label={\rm (\roman*)}]
			\item $P_H(\Omega ) \neq \Omega $ if and only if $A_H \mathrel{\mathop:}= \sigma _H (\Omega ) \cap \Omega ^\mathsf{c} \cap H$ has non-empty interior;
			\item $P_H(\Omega ) \neq \Omega $ if and only if $B_H \mathrel{\mathop:}= \Omega \cap \sigma _H (\Omega ^\mathsf{c}) \cap H$ has non-empty interior.
		\end{enumerate}
	\end{proposition}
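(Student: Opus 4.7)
My plan is to deduce each equivalence by combining the characterization from Proposition~\ref{propo:pol-equiv}(i) with an openness/reflection argument. Recall that Proposition~\ref{propo:pol-equiv}(i) gives $P_H(\Omega) = \Omega$ iff $\sigma_H(\Omega) \cap H \subseteq \Omega$, i.e.\ iff $A_H = \sigma_H(\Omega) \cap \Omega^c \cap H$ is empty. So the content of (i) is really that for an open $\Omega$, non-emptiness of $A_H$ is equivalent to $A_H$ having non-empty interior; the backward implication then recovers $P_H(\Omega)\neq \Omega$ immediately.

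For the forward direction of (i), I would take $x_0 \in A_H$ and produce an open ball inside $A_H$ in two steps. First, since $\sigma_H(x_0) \in \Omega \cap \overline{H}^\mathsf{c}$ and both sets are open, there is $r>0$ with $B_r(\sigma_H(x_0)) \subseteq \Omega \cap \overline{H}^\mathsf{c}$; applying $\sigma_H$ gives $B_r(x_0) \subseteq \sigma_H(\Omega) \cap H$. Hence $B_r(x_0) \cap \Omega^\mathsf{c} \subseteq A_H$, and it remains to find an open sub-ball inside $\Omega^\mathsf{c}$. If one had $B_r(x_0) \subseteq \overline{\Omega}$, openness of $B_r(x_0)$ would force $B_r(x_0)\subseteq \overline{\Omega}^{\,\circ}$; the bounded Lipschitz domains relevant to this paper satisfy $\overline{\Omega}^{\,\circ}=\Omega$, contradicting $x_0 \notin \Omega$. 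So there exists $x_1 \in B_r(x_0) \setminus \overline{\Omega}$, and a small ball around $x_1$ lies in $\mathbb{R}^d \setminus \overline{\Omega} \subseteq \Omega^\mathsf{c}$, giving the desired open subset of $A_H$.

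Part (ii) follows by a symmetric argument. Given $x_0 \in B_H$, openness of $\Omega \cap H$ yields $B_r(x_0) \subseteq \Omega \cap H$, so that $B_r(x_0) \cap \sigma_H(\Omega^\mathsf{c}) \subseteq B_H$. Reflecting via $\sigma_H$, it suffices to find an open sub-ball of $B_r(\sigma_H(x_0))$ contained in $\Omega^\mathsf{c}$; since $\sigma_H(x_0) \in \Omega^\mathsf{c}$, the same regularity step as in (i) applies. The main obstacle is precisely this upgrading step: for a pathological open set (e.g.\ a ball with a single interior point deleted) $A_H$ can consist of an isolated boundary point of $\Omega$, so the proof implicitly relies on the non-degeneracy $\Omega = \overline{\Omega}^{\,\circ}$ enjoyed by the domains to which this proposition is later applied.
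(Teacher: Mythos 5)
Your treatment of (i) follows essentially the same route as the paper: both identify the interior of $A_H$ via the openness of $\sigma_H(\Omega)\cap H$, and both silently invoke $\Omega=\interior(\overline{\Omega})$. The paper's assertion ``since $\sigma_H(\Omega)\cap H$ is open, $P_H(\Omega)\neq\Omega$ if and only if $\sigma_H(\Omega)\cap H \nsubseteq \overline{\Omega}$'' is exactly the step that requires this regularity, so the gap you flag is genuinely present in the published argument as well. Your punctured-ball example is a valid counterexample for general open sets: with $\Omega=B_1(0)\setminus\{-\frac{1}{2}e_1\}$ and $H=\{x_1<0\}$ one has $P_H(\Omega)=B_1(0)\setminus\{\frac{1}{2}e_1\}\neq\Omega$, yet $A_H=\{-\frac{1}{2}e_1\}$ has empty interior. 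Adding the hypothesis $\Omega=\interior(\overline{\Omega})$ (automatic for the Lipschitz domains to which the proposition is later applied) repairs both proofs, and making that hypothesis explicit is a genuine improvement over the paper.

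Part (ii), however, does not ``follow by a symmetric argument,'' and here your proof has a real gap. Your reasoning correctly upgrades $B_H\neq\emptyset$ to $B_H$ having non-empty interior, but you never supply the link between $B_H$ and the polarization identity, and that link does not run to $P_H(\Omega)=\Omega$. Tracing through Proposition~\ref{propo:pol-equiv}(ii) and Proposition~\ref{Propo-Obs}(iv) gives $B_H=\emptyset$ iff $\Omega\cap H\subseteq\sigma_H(\Omega)$ iff $P^H(\Omega)=\Omega$ iff $P_H(\Omega)=\sigma_H(\Omega)$. Indeed, (ii) as printed is a misprint: taking $\Omega=H$ gives $P_H(\Omega)=\Omega$ while $B_H=H$ has non-empty interior. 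The correct statement, which is both what the paper's own proof of (ii) establishes (by substituting $\sigma_H(H)$ for $H$ in (i), using $P_{\sigma_H(H)}(\Omega)=\Omega \Leftrightarrow P_H(\Omega)=\sigma_H(\Omega)$ and $\sigma_H(A_{\sigma_H(H)})=B_H$) and what Lemma~\ref{lem:3.3} actually invokes, is that $P_H(\Omega)\neq\sigma_H(\Omega)$ if and only if $B_H$ has non-empty interior. Your argument therefore proves the right set-theoretic dichotomy for $B_H$ but attaches it to the wrong polarization condition; the missing step is precisely the reduction $H\mapsto\sigma_H(H)$, which is the whole content of the paper's proof of (ii).
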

	\begin{proof}
		(i) First, we observe that the interior of $A_H$ is $\sigma _H (\Omega ) \cap \overline{\Omega }^\mathsf{c} \cap H$. Since $\sigma _H (\Omega ) \cap H$ is open, from  Proposition~\ref{propo:pol-equiv}, we get    $P_H(\Omega ) \neq \Omega $ if and only if $\sigma _H (\Omega ) \cap H \nsubseteq \overline{\Omega }$. Clearly, $\sigma _H (\Omega ) \cap H \nsubseteq \overline{\Omega }$ if and only if $\overline{\Omega }^\mathsf{c} \cap \sigma _H (\Omega ) \cap H \ne \emptyset $.\\
		(ii) For $H\in \mathscr{H}$, we have $\sigma _H(H) \in \mathscr{H}$. Then from Proposition~\ref{Propo-Obs}, $P_H(\Omega )=\sigma _H (\Omega )$ if and only if $P_{\sigma _H(H)} (\Omega )=\Omega $. The proof follows from (i) by replacing $H$ with $\sigma _H(H)$.
	\end{proof}
	\begin{remark}
		For an open set $\Omega \subset \mathbb{R}^d$, if $P_H(\Omega ) \neq \Omega $ then the interior of $P_H(\Omega ) \setminus \Omega $ is non-empty. Therefore, if $P_H(\Omega ) \neq \Omega $ then $P_H(\Omega )$ can not be equal to $\Omega $ up to a set of measure zero (or up to a set of $p$-capacity zero).
	\end{remark}
	Now, we prove that the set $P_H(\Omega )\cap H$ is a domain when $\Omega $ is a domain. For this, we need the following lemma.
	\begin{lemma}\label{lemma-symm_conn}
		Let $H\in \mathcal{H}$ and $\Omega \subseteq \mathbb{R}^d$ be a domain. If $\sigma _H(\Omega )=\Omega $, then both $\Omega \cap H$ and $\Omega \cap \overline{H}^\mathsf{c}$ are connected.
	\end{lemma}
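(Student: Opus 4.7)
The plan is to argue by contradiction: assuming $\Omega \cap H$ disconnects, I will produce a separation of $\Omega$ itself, violating that $\Omega$ is a domain. The same argument applied to $\sigma_H(H)$ in place of $H$ handles $\Omega \cap \overline{H}^\mathsf{c}$, so I focus on the first assertion.

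Suppose $\Omega \cap H = U_1 \sqcup U_2$ with $U_1, U_2$ open and non-empty. Since $\sigma_H(H) = \overline{H}^\mathsf{c}$ and $\sigma_H(\Omega) = \Omega$, the reflected sets $U_i' := \sigma_H(U_i)$ are open and sit inside $\Omega \cap \overline{H}^\mathsf{c}$, and $U_1', U_2'$ are disjoint by injectivity of $\sigma_H$. Thus the open set $\Omega \setminus \partial H$ already splits as $(U_1 \cup U_1') \sqcup (U_2 \cup U_2')$, and my remaining task is to show that the points of $\Omega \cap \partial H$ can be assigned unambiguously to one side or the other, in an open manner.

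For each $x \in \Omega \cap \partial H$, pick $r>0$ with $B_r(x) \subset \Omega$. Then $B_r(x) \cap H$ is a non-empty, connected open subset of $\Omega \cap H$, so it lies entirely in $U_1$ or entirely in $U_2$; define $f(x) \in \{1,2\}$ accordingly. Applying $\sigma_H$, the reflected half-ball $B_r(x) \cap \overline{H}^\mathsf{c}$ lies inside $U_{f(x)}'$, and for every $y \in B_r(x) \cap \partial H$ a smaller concentric ball shows $f(y) = f(x)$. Hence $f$ is locally constant on $\Omega \cap \partial H$.

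Now set
\[
W_i := U_i \cup U_i' \cup f^{-1}(i), \qquad i=1,2.
\]
The previous paragraph shows that for any $x$ in the third piece, a whole Euclidean ball around $x$ lies in $W_i$, so each $W_i$ is open in $\mathbb{R}^d$; and by construction $W_1, W_2$ are disjoint, non-empty, and cover $\Omega$. This contradicts the connectedness of $\Omega$, completing the proof. The only real delicacy in the argument—where I expect to have to be careful—is verifying that the boundary piece $f^{-1}(i)$ is absorbed into a genuine open subset of $\mathbb{R}^d$ when glued to $U_i \cup U_i'$; this is exactly what the symmetry $\sigma_H(\Omega)=\Omega$ and the connectedness of $B_r(x)\cap H$ are used for.
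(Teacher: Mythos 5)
Your proof is correct, and it takes a route that is close in spirit but differs in its logical organization from the paper's. The paper fixes an arbitrary continuous map $f:\Omega\cap\overline{H}\to\{0,1\}$, extends it to $\widetilde f$ on all of $\Omega$ by composing with $\sigma_H$ on the other side (continuity across $\partial H$ comes from $\sigma_H|_{\partial H}=\mathrm{id}$), deduces that $f$ is constant from the connectedness of $\Omega$, concludes that $\Omega\cap\overline{H}$ is connected, and \emph{then} passes to the interior $\Omega\cap H$. You instead start from a hypothetical separation $\Omega\cap H=U_1\sqcup U_2$, reflect it to $\Omega\cap\overline{H}^{\mathsf{c}}$, and use full balls around points of $\Omega\cap\partial H$ to assign those points unambiguously to one side, producing an open separation $W_1\sqcup W_2$ of $\Omega$ itself. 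Both proofs exploit exactly the same two ingredients -- the reflection $\sigma_H$ and the fact that $\Omega$ is open, so every point of $\Omega\cap\partial H$ is the center of a full Euclidean ball inside $\Omega$. What your version buys is explicitness at precisely the point where the paper is terse: the paper's final inference, that the interior of the connected set $\Omega\cap\overline{H}$ is connected, is not a general topological fact (the interior of a connected set need not be connected) and really rests on the same half-ball observation you spell out when showing $f$ is locally constant and that each $W_i$ is open. So your argument is arguably the more self-contained of the two, at the cost of managing a three-piece partition $U_i\cup U_i'\cup f^{-1}(i)$ rather than a single extended function. One small remark: you should also note that $f(x)$ does not depend on the choice of radius $r$ (two admissible radii give nested connected half-balls, hence land in the same $U_i$); this is implicit in your sketch and easy, but worth a line.
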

	\begin{proof}
		Let $f:\Omega \cap \overline{H}\longrightarrow \{0,1\}$ be a continuous function. Using the symmetry of $\Omega $, we define 
		\begin{align*}
			\widetilde{f}(x) = \left\{
			\begin{aligned}
				&f(x),  & \mbox{for } x\in \Omega \cap \overline{H},\\
				&f\circ \sigma _H (x),  & \mbox{for } x\in \Omega \cap H^\mathsf{c}.
			\end{aligned}
			\right.
		\end{align*}
		Then $\widetilde{f}$ is a continuous function on $\Omega $, since $\sigma _H(x)=x$ for $x\in \partial H$. By the connectedness of $\Omega $, $\widetilde{f}$ is constant on $\Omega $. In particular, $f$ is constant on $\Omega \cap \overline{H}$, and hence $\Omega \cap \overline{H}$ is connected. Therefore ${\rm int}\left(\Omega \cap \overline{H}\right)=\Omega \cap {H}$ is connected, and hence $\sigma_H(\Omega \cap H)=\Omega \cap \overline{H}^\mathsf{c}$ is also connected.
	\end{proof}
	\begin{proposition}\label{propo-PHdomain}
		Let $H\in \mathscr{H} $ and $\Omega \subseteq \mathbb{R}^d$ be a domain. Then $P_H(\Omega )\cap H$ is a domain.
	\end{proposition}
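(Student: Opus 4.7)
The plan is to rewrite $P_H(\Omega)\cap H$ in a more tractable form, then split into two cases according to whether $\Omega$ meets its reflection. From the definition of polarization,
\begin{align*}
P_H(\Omega)\cap H \;=\; \bigl(\Omega\cup\sigma_H(\Omega)\bigr)\cap H \;=\; \widetilde{\Omega}\cap H,
\end{align*}
where $\widetilde{\Omega}:=\Omega\cup\sigma_H(\Omega)$. Openness of this set is immediate from the fact that $H$, $\Omega$, and $\sigma_H(\Omega)$ are all open, so all that remains is to verify connectedness.

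First I would handle the easy case $\Omega\cap\sigma_H(\Omega)=\emptyset$. Any point of $\Omega\cap\partial H$ would be fixed by $\sigma_H$ and thus lie in $\sigma_H(\Omega)\cap\Omega$, so the assumption forces $\Omega\cap\partial H=\emptyset$. Since $\Omega$ is connected, it must lie entirely inside $H$ or entirely inside $\overline{H}^{\mathsf c}$. In the first case, $\sigma_H(\Omega)\subset\overline{H}^{\mathsf c}$, so $\widetilde{\Omega}\cap H=\Omega$; in the second case, $\widetilde{\Omega}\cap H=\sigma_H(\Omega)$. Either way the intersection is connected.

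In the remaining case $\Omega\cap\sigma_H(\Omega)\neq\emptyset$, the set $\widetilde{\Omega}$ is a union of two connected open sets with nonempty intersection and is therefore itself a connected open set, i.e.\ a domain. Moreover $\widetilde{\Omega}$ is symmetric with respect to $\partial H$ by construction, since $\sigma_H(\widetilde{\Omega})=\sigma_H(\Omega)\cup\Omega=\widetilde{\Omega}$. I can then invoke Lemma~\ref{lemma-symm_conn} directly with $\widetilde{\Omega}$ in place of $\Omega$ to conclude that $\widetilde{\Omega}\cap H$ is connected, which is exactly $P_H(\Omega)\cap H$.

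The main obstacle is nothing deep; it is simply to notice that Lemma~\ref{lemma-symm_conn} is available only when the ambient set is both symmetric and connected, so the bookkeeping step of passing from $\Omega$ to the symmetrised union $\widetilde{\Omega}$ and treating the disjoint case separately is what makes the argument clean. No further regularity or measure-theoretic input is needed.
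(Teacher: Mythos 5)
Your proposal is correct and follows essentially the same route as the paper: rewrite $P_H(\Omega)\cap H$ as $(\Omega\cup\sigma_H(\Omega))\cap H$, handle the disjoint case $\Omega\cap\sigma_H(\Omega)=\emptyset$ directly via connectedness of $\Omega$, and in the overlapping case apply Lemma~\ref{lemma-symm_conn} to the symmetric connected union $\Omega\cup\sigma_H(\Omega)$.
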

	\begin{proof}
		First, we observe that $P_H(\Omega )\cap H=\left(\Omega \cup \sigma _H(\Omega ) \right)\cap H$ is open. For proving the connectedness, we consider the following two cases: \textbf{(a)} $\Omega \cap \sigma _H(\Omega)=\emptyset $, and \textbf{(b)} $\Omega \cap \sigma _H(\Omega ) \neq \emptyset $.
		
		\vspace{3mm}
		\noindent \underline{\textbf{(a)} $\Omega \cap \sigma _H(\Omega)=\emptyset$:} In this case, we have $\Omega \cap \partial H =\emptyset $, since  $\Omega \cap \partial H\subset \Omega \cap \sigma _H(\Omega)$. Therefore, $\Omega $ is the union of two open sets $\Omega \cap H$ and $\Omega \cap {\overline{H}^\mathsf{c}}$. By the connectedness of $\Omega $, one of them is equal to $\Omega $. If $\Omega \cap H=\Omega $ then $P_H(\Omega ) =\Omega $, and hence $P_H(\Omega )\cap H=\Omega $. If $\Omega \cap \overline{H}^\mathsf{c} =\Omega $ then $P_H(\Omega ) = \sigma _H(\Omega )$, and hence $P_H(\Omega ) \cap H=\sigma _H(\Omega )$. 
		
		\vspace{3mm}
		\noindent \underline{\textbf{(b)} $\Omega \cap \sigma _H(\Omega )\neq \emptyset $:} In this case, we have $\Omega \cup \sigma _H(\Omega )$ is connected, and it is symmetric with respect to $\partial H$. Thus, by Lemma~\ref{lemma-symm_conn}, $\left(\Omega \cup \sigma _H(\Omega ) \right) \cap H=P_H(\Omega )\cap H$ is connected.
		
		\noindent
		Therefore, in both of the cases, $P_H(\Omega )\cap H$ is domain.
	\end{proof}
	\noindent \textbf{The Steiner, axial and foliated Schwarz symmetries:} A set in $\mathbb{R}^d$ is said to have certain symmetry, if it is invariant under corresponding symmetrization or rearrangement on $\mathbb{R}^d$.  Here, we directly give the definitions of  the Steiner and the foliated Schwarz symmetries without  defining the associated symmetrizations (see~\cite[Definition~3.1 and Definition~3.2]{VanJeanWillem08}). The foliated Schwarz symmetrization with respect to a ray $a+\mathbb{R}^+\eta $ is the cap symmetrization with respect to $a+\mathbb{R}^+\eta $, see~\cite[Definition~3.2]{VanJeanWillem08}. 
	\begin{definition}\label{defn-symm}
	Let $A\subseteq \mathbb{R}^d$ be a measurable set.
	\begin{enumerate}[align=right,label=\rm (\arabic*)]
		\item\label{def-Steiner}\textbf{Steiner symmetry.}
		Let $S$ be an affine-hyperplane in $\mathbb{R}^d$. For each $x\in S,$ let $L_x$ be the line passing through $x$ and orthogonal to $S.$ Then $A$ is said to be Steiner symmetric with respect to $S$, if 
		\begin{align*}
		    \text{  for each } x\in S,\quad A\cap L_x= B_\rho(x)\cap L_x \text{ for some } \rho\ge 0.
		\end{align*}
		
		\item \label{def-axial}\textbf{Axial symmetry.} 
		Let $L$ be a  line in $\mathbb{R}^d$. For each $x\in L,$ let $S_x$ be the affine hyperplane passing through $x$ and orthogonal  to $L$. Then $A$ is said to be axially symmetric with respect to  $L,$ if 
		\begin{align*}
		    \text{  for each } x\in L,\quad A\cap \partial B_\rho (x)\cap S_x= \partial B_\rho (x)\cap S_x \text{ for some } \rho\ge 0.
		\end{align*}
		
		\item\label{def-fss}\textbf{Foliated Schwarz symmetry.}
		Let $a+\mathbb{R}^+\eta $ be a ray starting for some $a\in \mathbb{R}^d$ and $\eta \in \mathbb{S}^{d-1}$. Then $A$ said to be foliated Schwarz symmetric with respect to $a+\mathbb{R}^+\eta $, if 
		\begin{align*}
		    \text{for every } r>0,\quad A\cap \partial B_r(a)= B_\rho (a+r \eta ) \cap \partial B_r(a) \text{ for some } \rho \geq 0. 
		\end{align*} 
	\end{enumerate}
	\end{definition}
	\begin{remark}\label{Observations}
	    We observe that:
	    \begin{enumerate}[label=(\roman*)]
	        \item a set $A\subseteq \mathbb{R}^d$ is Steiner symmetric with respect to an affine-hyperplane $S,$ if and only if  $A$ is invariant under the reflection with respect to $S$ and convex in the orthogonal direction to  $S$;
	        \item a set $A\subseteq \mathbb{R}^d$ is axially symmetric with respect to a line $L,$  if and only if $A$ is invariant under the reflection with respect to every affine hyperplane containing L. In particular, if $L=\mathbb{R}\eta$ and $R$ is any rotation on $\mathbb{R}^d$ such that $R(\eta)=\eta$, then $R(A)=A.$ This follows from the definition, since the planes of rotation of such $R$ can not contain $\eta$, and hence  those planes must be orthogonal to $\eta $; 
	        \item let $A\subseteq \mathbb{R}^d $ be  foliated Schwarz symmetric  with respect to  $a+\mathbb{R}^+ \eta.$ Let $I_A:=\bigl\{r>0 : A\cap \partial B_r(a)\ne \emptyset \bigr\}$. For $r\in I_A,$ let $\rho(r)>0$ be such that $A\cap \partial B_r(a)=B_{\rho(r)}(a+r\eta )\cap \partial B_r(a)$. Then, 
	        \begin{align}\label{eqn:fssSet}
	            A=\bigcup_{r\in I_A}  B_{\rho(r)}(a+r\eta )\cap \partial B_r(a).
	        \end{align}
	    \end{enumerate}
	\end{remark}
	The following proposition provides some  properties of the foliated Schwartz symmetric sets.
	\begin{proposition}\label{fssObs}
	   If $A\subseteq \mathbb{R}^d$ is foliated Schwarz symmetric with respect to a ray $a+\mathbb{R}^+ \eta $ then
	   \begin{enumerate}[label={\rm (\roman*)}]
	      \item $A$ is axially symmetric with respect to $a+\mathbb{R} \eta,$ 
	      \item for any linear map $T$  and  $b\in \mathbb{R}^d$, the set $b+T(A)$ is foliated Schwarz symmetric with respect to $b+T(a)+\mathbb{R}^+T(\eta )$,
	      \item $R(-a+A)=-a+A,$ for any rotation $R$ on $\mathbb{R}^d$ that fixes  $\eta $. 
	   \end{enumerate}
	\end{proposition}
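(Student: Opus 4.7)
My approach will base all three assertions on the representation in Remark~\ref{Observations}(iii):
\begin{equation*}
A = \bigcup_{r \in I_A} \bigl( B_{\rho(r)}(a + r\eta) \cap \partial B_r(a) \bigr),
\end{equation*}
so that the proof reduces to tracking what various isometries do to the spheres $\partial B_r(a)$ and to the spherical caps $B_{\rho(r)}(a + r\eta) \cap \partial B_r(a)$ slice by slice.

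I will begin with (iii), which is the most concrete. Given a rotation $R$ with $R(\eta) = \eta$, I will translate the representation by $-a$ so that the apex sits at the origin, then note that $R$ is a linear isometry fixing both $0$ and $r\eta$ for every $r > 0$; consequently $R$ preserves each sphere $\partial B_r(0)$ and each ball $B_{\rho(r)}(r\eta)$. Applying $R$ term-by-term in the union then yields $R(-a + A) = -a + A$.

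For (i), after translating by $-a$ the axis $a + \mathbb{R}\eta$ becomes $\mathbb{R}\eta$, and by Remark~\ref{Observations}(ii) axial symmetry is equivalent to invariance of $-a + A$ under every reflection in a hyperplane through the origin that contains $\mathbb{R}\eta$. Each such reflection is an orthogonal transformation fixing $\eta$, so the verbatim argument from (iii)—which in fact used only that $R$ is an isometry fixing $0$ and $\eta$—applies. For (ii), I will take $T$ to be a linear isometry (the only case relevant to the applications in this paper, where $T$ is always a rotation $R_{s,\xi}$). Under such a $T$ the sphere $\partial B_r(a)$ maps to $\partial B_r(T(a))$ and the ball $B_{\rho(r)}(a + r\eta)$ maps to $B_{\rho(r)}(T(a) + rT(\eta))$; translating by $b$ shifts both centers by $b$. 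Substituting into the representation produces
\begin{equation*}
b + T(A) = \bigcup_{r \in I_A} \bigl( B_{\rho(r)}(b + T(a) + r T(\eta)) \cap \partial B_r(b + T(a)) \bigr),
\end{equation*}
which is exactly the defining form of foliated Schwarz symmetry with respect to $b + T(a) + \mathbb{R}^+ T(\eta)$.

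I do not anticipate a substantial obstacle: the entire statement is a bookkeeping exercise built on the slice decomposition \eqref{eqn:fssSet}. The only point to be careful about is (ii), where $T$ must preserve the Euclidean structure for the conclusion to make sense—a genuinely non-isometric linear map would distort spheres into ellipsoids and destroy the foliated structure, so the statement is to be read with $T$ an isometry, which is precisely what the paper needs.
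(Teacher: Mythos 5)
Your proof is correct and rests on the same key tool as the paper — the slice decomposition~\eqref{eqn:fssSet} — but you organize the logic in the opposite order. The paper proves (i) directly (noting that each cap $B_{\rho(r)}(a+r\eta)\cap\partial B_r(a)$ is itself axially symmetric about $a+\mathbb{R}\eta$), proves (ii) directly by intersecting with one sphere at a time, and then obtains (iii) as a corollary of (i), (ii) together with Remark~\ref{Observations}(ii). You instead prove (iii) first by tracking what an isometry fixing $0$ and $\eta$ does to each slice, and then derive (i) from the same calculation applied to reflections in hyperplanes containing $\mathbb{R}\eta$, invoking the characterization in Remark~\ref{Observations}(ii). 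Both routes are valid and roughly equal in length; yours has the minor advantage of making explicit that a single mechanism (``isometries fixing the axis permute the slices'') underlies (i) and (iii).

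Your remark about part (ii) is a genuinely useful observation. The proposition states ``for any linear map $T$,'' but the identity $(b+T(A))\cap\partial B_r(b+T(a))=b+T(A\cap\partial B_r(a))$ used in the paper's proof (and in yours) requires $T$ to carry spheres to spheres, which fails for a general linear map. The statement and proof are correct once $T$ is taken to be a linear isometry (or, with a trivial rescaling of radii, a similarity), and this is in fact the only case the paper ever invokes — $T=I$ in the proof of (iii), and $T=R_{s,\xi}$ in Lemma~\ref{lemma:fssObs}. So your restriction is not a loss of generality for the paper's purposes, and it is the honest hypothesis under which the slice-by-slice computation goes through.
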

	\begin{proof}
	   
	   (i) Observe that, for every $r\in I_A$, the set $B_{\rho (r)}(a+r \eta ) \cap \partial B_r(a)$ is axially symmetric with respect to $a+\mathbb{R}\eta.$ Now, using  ~\eqref{eqn:fssSet}, we conclude that $A$ is  axially symmetric  with respect to $a+\mathbb{R}\eta $.
	   
	   \noindent
	   (ii) Let  $r>0,$ then 
	   \begin{align*}
	    (b+T(A))\cap \partial B_r(b+T(a))&=b+T(A\cap \partial B_r(a)) =b+T(B_\rho (a+r\eta ) \cap \partial B_r(a)), \text{ for some } \rho \ge 0,
	    \end{align*}
	    where the last equality follows from  the definition foliated Schwarz symmetry. Thus 
	    \begin{align*}
	    (b+T(A))\cap \partial B_r(b+T(a))&=b+B_\rho (T(a)+rT(\eta) ) \cap \partial B_r(T(a))\\
	    &=B_\rho (b+T(a)+rT(\eta) ) \cap \partial B_r(b+T(a)).
	\end{align*}
	Now, we obtain the required conclusion by the definition of foliated Schwarz symmetry.
	
	\noindent
	(iii) By taking $T=I$ and $b=-a$ in (ii), we get  $-a+A$ is foliated Schwarz symmetric with respect to $\mathbb{R}^+\eta $. Thus by (i), $-a+A$ is axially symmetric with respect to   $\mathbb{R}\eta.$ Since $R$ fixes $\eta$, from (ii) of Remark \ref{Observations}, we conclude $R(-a+A)=-a+A.$
	\end{proof}
	Next, we characterize the foliated Schwarz and Steiner symmetric sets using the polarizations. First, we consider the following polarizers: for given  $a\in \mathbb{R}^d, \eta \in \mathbb{S}^{d-1},$ let
	\begin{align*}
	\mathscr{H}_{a,\eta }\mathrel{\mathop:}=\Big\{H\in \mathscr{H} : a+\mathbb{R}^+\eta \subset H \mbox{ and } a\in \partial H \Big\}.
	\end{align*}
	Some useful characterizations of the Steiner symmetry (from~\cite[Lemma~2.2]{Bobkov-Sergey19}), foliated Schwarz symmetry (from~\cite[Section~3]{VanJeanWillem08}) are given in the following proposition. 
	\begin{proposition}\label{propo:char}
	Let $A \subseteq \mathbb{R}^d$ be any set. 
	\noindent 
	\begin{enumerate}[label=\rm (\roman*)]
		\item \label{propo:equi-Steiner1} Let $H_s\in \mathscr{H}$ be as given in~\eqref{fam-polarizers}. Then, for $s_0\in \mathbb{R}$, the following statements are equivalent:
		\begin{enumerate}[label=\rm (\alph*)]
			\item the set $A$ is Steiner symmetric with respect to the affine-hyperplane $\partial H_{s_0}$,
			\item $P_{H_s} (A) = A$, for every $s \geq s_0$;  and $P^{H_s} (A) = A$, for every $s \leq s_0$.
		\end{enumerate}
		\item \label{char:fss} Let $a\in \mathbb{R}^d$ and $\eta \in \mathbb{S}^{d-1}$. Then the following are equivalent:
		\begin{enumerate}[label=\rm (\alph*)]
			\item the set $A$ is foliated Schwarz symmetric with respect to the ray $a+\mathbb{R}^+\eta $,
			\item $P^H(A)=\sigma _H(A)$, for every $H\in \mathscr{H}_{a,\eta }$.
		\end{enumerate}
	\end{enumerate}
	\end{proposition}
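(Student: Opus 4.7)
The plan is to deduce both equivalences from Proposition~\ref{propo:pol-equiv}, which tells us that $P_H(A)=A$ iff $\sigma_H(A)\cap H\subseteq A$, that $P^H(A)=A$ iff $\sigma_H(A)\cap H^c\subseteq A$, and that $P_H(A)=P^H(A)$ iff $\sigma_H(A)=A$. Coordinate-wise, for (i) I would take $h=e_1$ so that $H_s=\{x_1<s\}$ and $\partial H_{s_0}=\{x_1=s_0\}$. For (ii), I would reduce the condition $P^H(A)=\sigma_H(A)$ to $P_H(A)=A$ by applying $\sigma_H$ to both sides and using $\sigma_H(P^H(A))=P_H(A)$ from Proposition~\ref{Propo-Obs}(iv).

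For (i), in the direction (a)$\Rightarrow$(b), given $s\geq s_0$ and $x=(x_1,x')\in\sigma_{H_s}(A)\cap H_s$, I would look at the line slice through $x'$, note that it is a symmetric interval centered at $s_0$ containing the point $2s-x_1$, and then verify the elementary inequality $2s_0-(2s-x_1)\leq x_1\leq 2s-x_1$ (the left inequality uses $s\geq s_0$ and the right uses $x_1<s$), which forces $x_1$ into that interval. The analogous case $s\leq s_0$ handles $P^{H_s}(A)=A$. For (b)$\Rightarrow$(a), taking $s=s_0$ in both conditions and invoking Proposition~\ref{propo:pol-equiv}(iii) gives $\sigma_{H_{s_0}}(A)=A$. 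For convexity of slices, given two points in $A$ on a common line parallel to $e_1$, I would produce any intermediate point by reflecting the farther endpoint through an appropriate $\partial H_s$: choosing $s$ as the midpoint of the intermediate point and the endpoint automatically makes the intermediate point lie in $\sigma_{H_s}(A)\cap H_s$, and a short check shows $s\geq s_0$ (or $s\leq s_0$) in the relevant configuration, so the hypothesis applies. The symmetry already established reduces the mixed case (one point on each side of $\partial H_{s_0}$) to this one.

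For (ii), in direction (a)$\Rightarrow$(b), for $H\in\mathscr{H}_{a,\eta}$ with inward normal $\nu$ (so $\eta\cdot\nu>0$) and $x\in\sigma_H(A)\cap H$, I would set $r=|x-a|$, noting that $\sigma_H$ fixes $\partial B_r(a)$, and exploit the identity
\begin{equation*}
|\sigma_H(x)-(a+r\eta)|^2-|x-(a+r\eta)|^2 = 4r\,((x-a)\cdot\nu)(\nu\cdot\eta)>0,
\end{equation*}
which shows $x$ is strictly closer than $\sigma_H(x)\in A$ to the pole $a+r\eta$. Since $A\cap\partial B_r(a)$ is a spherical cap around that pole, $x$ lies in it. For (b)$\Rightarrow$(a), given $y_1,y_2\in\partial B_r(a)$ with $y_2\in A$ and $y_1$ closer than $y_2$ to $a+r\eta$ (equivalently $(y_1-a)\cdot\eta>(y_2-a)\cdot\eta$), I would take $H$ to be the halfspace with inward normal $\nu=(y_1-y_2)/|y_1-y_2|$ bounded by the hyperplane through $a$ perpendicular to $y_1-y_2$; the identity $(y_2-y_1)\cdot(y_1+y_2-2a)=|y_2-a|^2-|y_1-a|^2=0$ guarantees this hyperplane indeed passes through the midpoint of $[y_1,y_2]$, so that $\sigma_H(y_1)=y_2$. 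The condition $\nu\cdot\eta>0$ follows from the closeness assumption, placing $H$ in $\mathscr{H}_{a,\eta}$, and then $P_H(A)=A$ yields $y_1\in A$. This monotonicity on each sphere is exactly foliated Schwarz symmetry.

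The main technical obstacle is the case analysis in the convexity step of (i): one must be careful with the three relative positions of the intermediate point and the two endpoints with respect to $\partial H_{s_0}$, and ensure that the reflecting hyperplane $\partial H_s$ constructed falls into the range where the appropriate hypothesis ($P_{H_s}(A)=A$ or $P^{H_s}(A)=A$) applies. For (ii), the main subtlety is not the construction of $H$ (which is forced) but verifying the sign condition $\nu\cdot\eta>0$ and the quadratic identity placing the midpoint on the hyperplane through $a$; both are ultimately consequences of $|y_1-a|=|y_2-a|=r$.
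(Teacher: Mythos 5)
The paper does not prove this proposition; it merely imports both characterizations from the literature (Lemma~2.2 of the Bobkov--Kolonitskii reference for Steiner symmetry, and Section~3 of the Van Schaftingen--Willem reference for foliated Schwarz symmetry), so there is no in-paper proof to compare against. Your argument supplies a correct, self-contained proof.

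Your reductions are exactly right: via Proposition~\ref{propo:pol-equiv}, condition (b) of part (i) becomes the pair of inclusions $\sigma_{H_s}(A)\cap H_s\subseteq A$ for $s\geq s_0$ and $\sigma_{H_s}(A)\cap H_s^{\mathsf c}\subseteq A$ for $s\leq s_0$, while in part (ii) the condition $P^H(A)=\sigma_H(A)$ reduces to $P_H(A)=A$ by Proposition~\ref{Propo-Obs}(iv), hence to $\sigma_H(A)\cap H\subseteq A$. The line-by-line (respectively sphere-by-sphere) verification then goes through cleanly. In (i), the two inequalities $2s_0-(2s-x_1)\leq x_1$ and $x_1\leq 2s-x_1$ place $x_1$ inside the symmetric interval containing $2s-x_1$, and the converse uses $s=s_0$ together with Proposition~\ref{propo:pol-equiv}(iii) to recover reflection symmetry, then the midpoint reflection to recover convexity. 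In (ii), the identity $|\sigma_H(x)-(a+r\eta)|^2-|x-(a+r\eta)|^2=4r\,((x-a)\cdot\nu)(\nu\cdot\eta)$ is correct (it uses only that $\sigma_H$ is an isometry fixing $a$ and $|\sigma_H(x)|=|x|$), the sign condition is exactly admissibility of the polarizer, and for the converse the perpendicular bisector through $a$ is forced by $|y_1-a|=|y_2-a|$, with $\nu\cdot\eta>0$ equivalent to $y_1$ being the point nearer the pole.

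One small point worth flagging, though it is an imprecision of the paper's definition rather than a gap in your argument: in the (b)$\Rightarrow$(a) directions you obtain that each line slice is a symmetric convex set about $s_0$ (respectively, each spherical section is a cap about the pole), but the verbatim wording of Definition~\ref{defn-symm} demands equality with an \emph{open} interval $B_\rho(x)\cap L_x$ (respectively open cap). For a completely arbitrary set, strict monotonicity between points at distinct distances from the pole does not pin down what happens on the boundary circle of the cap. The paper itself works with the ``reflection-symmetric and convex in the orthogonal direction'' reading (Remark~\ref{Observations}), and the cited sources treat the relevant sets up to topology or measure, so this does not affect the use of the proposition; but if one insisted on the literal definition for arbitrary $A$ one would need to restrict to open sets or argue modulo null sets.
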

	We have the following corollary.
	\begin{corollary}\label{coro-radialsymm}
	   Let $A\subseteq \mathbb{R}^d$ be any set. If $A$ is foliated Schwarz symmetric with respect to both the rays $a+\mathbb{R}^+\eta $ and $a-\mathbb{R}^+ \eta $ for some $a\in \mathbb{R}^d$ and $\eta \in \mathbb{S}^{d-1}$. Then $A$ is radial with respect to the point $a$.
	\end{corollary}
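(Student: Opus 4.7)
The plan is to show that for every $r>0$ the set $A\cap \partial B_r(a)$ is either empty or all of $\partial B_r(a)$; since radial symmetry of $A$ about $a$ means exactly that $A$ is a union of spheres centered at $a$, this dichotomy gives the conclusion. I would work directly from the definition of foliated Schwarz symmetry given in Definition~\ref{defn-symm}, applied to both rays, rather than going through the polarization characterization of Proposition~\ref{propo:char}.

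Fix $r>0$. Foliated Schwarz symmetry with respect to $a+\mathbb{R}^+\eta$ provides some $\rho_1\geq 0$ with
\[
A\cap \partial B_r(a)=B_{\rho_1}(a+r\eta)\cap \partial B_r(a),
\]
a spherical cap about the ``north pole'' $a+r\eta$; similarly, foliated Schwarz symmetry with respect to $a-\mathbb{R}^+\eta$ yields $\rho_2\geq 0$ with
\[
A\cap \partial B_r(a)=B_{\rho_2}(a-r\eta)\cap \partial B_r(a),
\]
a cap about the ``south pole'' $a-r\eta$. The identity $|y-(a\pm r\eta)|^2=2r^2\mp 2r(y-a)\cdot\eta$ for $y\in\partial B_r(a)$ rewrites these two caps, respectively, as the level sets
\[
\{y\in\partial B_r(a):(y-a)\cdot\eta>c_1\}\quad\text{and}\quad\{y\in\partial B_r(a):(y-a)\cdot\eta<-c_2\},
\]
where $c_i=r-\rho_i^2/(2r)$. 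By hypothesis these two subsets of $\partial B_r(a)$ coincide.

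The key step is a comparison at the two poles: the quantity $(y-a)\cdot\eta$ attains the value $r$ at $y=a+r\eta$ and the value $-r$ at $y=a-r\eta$, so the first set contains $a+r\eta$ iff $c_1<r$ and contains $a-r\eta$ iff $c_1<-r$, whereas the second set contains $a-r\eta$ iff $c_2<r$ and contains $a+r\eta$ iff $c_2<-r$. Equating the two descriptions forces either $\rho_1=\rho_2=0$ (both sides empty) or $c_1<-r$ and $c_2<-r$, equivalently $\rho_1,\rho_2>2r$ (both sides equal to all of $\partial B_r(a)$). Hence $A\cap \partial B_r(a)\in\{\emptyset,\partial B_r(a)\}$ for every $r>0$, so $A$ is a union of spheres centered at $a$, i.e.\ radial about $a$.

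The only delicate point I anticipate is the bookkeeping at the degenerate parameter values $\rho_i\in\{0,2r\}$: the open-ball convention used in Definition~\ref{defn-symm} means that a ``full'' cap can in fact be the sphere with the antipodal point deleted, rather than the entire sphere. This punctured-sphere case is immediately incompatible with the opposite-pole cap description (the missing antipodal point on one side is precisely the pole that the opposite cap contains whenever it is nonempty), so the same pole comparison rules it out; no deeper obstacle arises.
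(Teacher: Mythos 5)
Your argument is correct and rests on the same geometric observation as the paper's: if the two spherical caps (centered at the antipodal poles $a\pm r\eta$ of $\partial B_r(a)$) coincide, then for $r\in I_A$ they are nonempty and hence must both equal the full sphere. The paper reaches this more briskly with a single-pole observation — it notes that the south pole $a-r\eta$ lies in $B_{\rho_2}(a-r\eta)\cap\partial B_r(a)=A\cap\partial B_r(a)=B_{\rho_1}(a+r\eta)\cap\partial B_r(a)$, hence $\rho_1>2r$ (the paper writes $\rho_1\geq 2r$, but the open-ball membership actually gives the strict inequality), so $\partial B_r(a)\subset B_{\rho_1}(a+r\eta)$; your version rewrites both caps as level sets of $(y-a)\cdot\eta$ on the sphere and compares at both poles. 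The extra care you take with the borderline values $\rho_i\in\{0,2r\}$, and in particular with the punctured-sphere case $\rho_1=2r$, is warranted and correctly handled — the opposite-pole membership rules it out exactly as you say — and this is in fact a point the paper passes over slightly loosely; your presentation just makes the same idea more explicit.
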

	\begin{proof}
	   Notice that, $A$ is radial with respect to $a\in \mathbb{R}^d$ provided $A\cap \partial B_r(a) = \partial B_r(a)$ for every $r\in I_A$, where $I_A=\{r\in \mathbb{R}:A\cap \partial B_r(a)\neq \emptyset \}$. Since $A$ is foliated Schwarz symmetric with respect to both the rays $a+\mathbb{R}^+\eta $ and $a-\mathbb{R}^+ \eta $, for each $r\in I_A$ we get:
	   \begin{align}\label{eqn:coro-radialsymm}
	       A\cap \partial B_r(a)= B_{\rho _1}(a+r\eta ) \cap \partial B_r(a) = B_{\rho _2}(a-r\eta )\cap \partial B_r(a) \mbox{ for some } \rho _1, \rho _2\geq 0.
	   \end{align}
	   Since $|a-(a-r\eta )|=r$, from~\eqref{eqn:coro-radialsymm} we obtain $a-r\eta \in B_{\rho _1}(a+r\eta )$. Thus $\rho _1 \geq |a-r\eta -(a+r\eta )|=2r$, and hence $B_{\rho _1}(a+r\eta ) \cap \partial B_r(a) =\partial B_r(a)$. Now, from~\eqref{eqn:coro-radialsymm} we conclude that $A\cap \partial B_r(a)=\partial B_r(a)$.
	\end{proof}
	\subsection{Polarization of punctured domains} 
	We consider the polarization of the punctured domains of the form $A\setminus C$, where $A\subseteq \mathbb{R}^d$ is open, and $C\subset A$ is closed. Clearly $\partial (A \setminus C) = \partial A \sqcup \partial C$.
	\begin{proposition}\label{propo:pol-compliment1}
	   Let $A\subseteq \mathbb{R}^d$ be open and $C \subset A$ be closed. If $H \in \mathscr{H}$ is such that $\sigma _H(C) \subset A$, then
	   \begin{enumerate}[label=\rm (\roman*)]
		   \item $P_H(A \setminus C)= P_H(A)\setminus P^H(C)$,
		   \item $P^H(C) \subset P_H(A )$, in particular $\partial P_H(A \setminus C) = \partial P_H(A) \sqcup \partial P^H(C).$ 
	   \end{enumerate}
	\end{proposition}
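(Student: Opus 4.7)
The plan is to prove (i) by two inclusions, and then to derive (ii) from (i) together with a general topological fact about open sets with a closed subset removed.

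For the inclusion $P_H(A\setminus C) \subseteq P_H(A) \setminus P^H(C)$, I would use Proposition~\ref{Propo-Obs} to avoid any case analysis: writing $A\setminus C = A \cap C^c$, part (iii) gives $P_H(A\cap C^c) \subseteq P_H(A) \cap P_H(C^c)$, and part (v) identifies $P_H(C^c) = (P^H(C))^c$. So $P_H(A\setminus C) \subseteq P_H(A)\setminus P^H(C)$, and this step does not require the hypothesis $\sigma_H(C)\subset A$.

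For the reverse inclusion, I would work directly from the defining formulas. Take $x \in P_H(A)\setminus P^H(C)$ and split into the cases $x\in H$ and $x\in H^c$ (the subset $\partial H\subseteq H^c$ is automatically covered since $\sigma_H$ fixes $\partial H$). If $x\in H^c$, then $x\in P_H(A)$ forces $x \in A\cap \sigma_H(A)$, while $x \notin P^H(C)$ forces $x \notin C\cup\sigma_H(C)$, and hence $x \in (A\setminus C)\cap \sigma_H(A\setminus C)\subseteq P_H(A\setminus C)$. If $x\in H$, then $x \in A\cup \sigma_H(A)$ and $x\notin C\cap \sigma_H(C)$; the hypothesis $\sigma_H(C)\subseteq A$ is used here to eliminate the a priori troublesome subcase in which $x \notin A$ and $\sigma_H(x)\in C$, because then $x \in \sigma_H(C)\subseteq A$, contradicting $x\notin A$. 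Chasing the remaining subcases (either $x\in A\setminus C$ directly, or $x\in A\cap C$ in which case $\sigma_H(x)\in \sigma_H(C)\subseteq A$ and $\sigma_H(x)\notin C$, so $x\in \sigma_H(A\setminus C)$) shows $x\in P_H(A\setminus C)$. This case analysis is the main technical point of the proof.

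For part (ii), the containment $P^H(C)\subseteq P_H(A)$ is a short direct verification from the defining formulas: every $x\in P^H(C)$ satisfies both $x$ and $\sigma_H(x)$ in $C\cup \sigma_H(C)\subseteq A$, using $C\subseteq A$ together with $\sigma_H(C)\subseteq A$; hence $x\in A\cap \sigma_H(A)\subseteq P_H(A)$. The boundary decomposition is then the following general topological fact: if $U\subseteq \mathbb{R}^d$ is open and $F\subseteq U$ is closed, then $U\setminus F$ is open and $\partial(U\setminus F)=\partial U \sqcup \partial F$, with disjointness coming from $\partial U\cap U=\emptyset$ and $\partial F\subseteq F\subseteq U$. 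Applying this with $U=P_H(A)$ (open by Proposition~\ref{Propo-Obs}(i)) and $F=P^H(C)$ (closed by the same proposition, as $C$ is closed) completes the argument.
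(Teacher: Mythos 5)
Your proof is correct in its main content, and your route to part (i) is genuinely different from the paper's. The paper proves (i) in one stroke by comparing $P_H(A\cap C^\mathsf{c})$ and $P_H(A)\cap P_H(C^\mathsf{c})$ separately on $H$ and on $H^\mathsf{c}$: on $H^\mathsf{c}$ the two always agree (the $H^\mathsf{c}$-part of a polarization is the intersection $\,\cdot\cap\sigma_H(\cdot)\cap H^\mathsf{c}$, which commutes with $\cap$), while on $H$ the comparison reduces to the identity $(A\cap C^\mathsf{c})\cup\sigma_H(A\cap C^\mathsf{c})=(A\cup\sigma_H(A))\cap(C^\mathsf{c}\cup\sigma_H(C^\mathsf{c}))$, and that is exactly where $C,\sigma_H(C)\subseteq A$ enter --- they force the two cross terms $\sigma_H(A)\cup C^\mathsf{c}$ and $A\cup\sigma_H(C^\mathsf{c})$ to be all of $\mathbb{R}^d$. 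You instead obtain the inclusion $P_H(A\setminus C)\subseteq P_H(A)\setminus P^H(C)$ for free from Proposition~\ref{Propo-Obs}(iii),(v), and then do the reverse inclusion by a pointwise case chase. Both routes work; the paper's is slicker (one algebraic identity, no case split, equality at once), while yours isolates precisely where the hypothesis is needed and is arguably easier to audit. For part (ii) your argument is essentially the paper's --- $P^H(C)\subseteq C\cup\sigma_H(C)$, which is contained in $A\cap\sigma_H(A)\subseteq P_H(A)$ --- followed by the standard fact that $\partial(U\setminus F)=\partial U\sqcup\partial F$ for $U$ open and $F\subseteq U$ closed.

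There is one small gap in the $x\in H$ case chase. After you eliminate the subcase $x\notin A$, $\sigma_H(x)\in C$, your list of ``remaining subcases'' ($x\in A\setminus C$ or $x\in A\cap C$) silently assumes $x\in A$. The subcase $x\notin A$, $\sigma_H(x)\in A$, $\sigma_H(x)\notin C$ is still live, and it is handled just like the first: then $\sigma_H(x)\in A\setminus C$, so $x\in\sigma_H(A\setminus C)\cap H\subseteq P_H(A\setminus C)$. This is a one-line addition, but as written the enumeration is not exhaustive.
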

	\begin{proof}
	   (i) For $A \subseteq \mathbb{R}^d$, denote $P_H^+(A)= P_H(A)\cap H$  and $P_H^-(A)=P_H(A)\cap H^\mathsf{c}$. Thus $P_H(A)=P_H^+(A)\sqcup P_H^-(A)$, and
	   \begin{align}\label{eqn:3.7}
		P_H(A)\cap P_H(C^\mathsf{c})= \left[P_H^+(A)\cap P_H^+(C^\mathsf{c}) \right] \sqcup \left[P_H^-(A)\cap P_H^-(C^\mathsf{c})\right].
	   \end{align}
	   On the other hand, we have $P_H^-(A\cap C^\mathsf{c}) = P_H^-(A) \cap P_H^-(C^\mathsf{c}).$	Since $C, \sigma _H(C) \subseteq A $, we get $\sigma _H(A )\cup C^\mathsf{c} = A \cup \sigma _H(C^\mathsf{c}) = \mathbb{R}^d$. Thus, $(A\cap C^\mathsf{c})\cup \sigma _H(A\cap C^\mathsf{c}) = \left(A \cup \sigma _H(A)\right) \cap \left(C^\mathsf{c}\cup \sigma _H (C^\mathsf{c}) \right)$, and hence $P_H^+(A \cap C^\mathsf{c})=P_H^+(A ) \cap P_H^+(C^\mathsf{c}).$ Therefore, from~\eqref{eqn:3.7} and using $P_H(C^\mathsf{c})=(P^H(C))^\mathsf{c}$ (Proposition~\ref{Propo-Obs}-(v)), we obtain 
	   \begin{align*}
		P_H(A \setminus C)=P_H(A ) \cap P_H(C^\mathsf{c})=P_H(A )\setminus P^H(C).
	   \end{align*}
	   (ii) Since $C\cup \sigma _H(C)$ is a symmetric set in $A,$ by the definitions of $P^H$ and $P_H$, we get $$P^H(C)\subseteq C\cup \sigma _H(C)=P_H(C\cup \sigma _H(C))\subset P_H(A).$$ Moreover,  $P^H(C)$ is closed and $P_H(A )$ is open in $\mathbb{R}^d.$ Thus,
	   \begin{align*}
 		\partial P_H(A\setminus C)&=\partial\left (P_H(A )\setminus P^H(C)\right) =\partial P_H(A) \sqcup \partial P^H(C). \qedhere 
	   \end{align*}
	\end{proof}
	\begin{remark}
	    The assumption $\sigma _H(C)\subset A$ is essential  for the conclusions of the above proposition. To see this, consider $A=B_R(0)$, $C=\overline{B}_r(0)$, and the polarizers $H_s:=\{x\in \mathbb{R}^d : x_1<s\}$ for $s\in \mathbb{R}$. For $s>\frac{R-r}{2}$, we have $\lvert\sigma _{H_t}(C)\cap A^\mathsf{c}\rvert=\lvert\overline{B}_r(2t e_1) \cap B_R(0)^\mathsf{c}\rvert>0$, where $|A|$ is the Lebesgue measure of $A\subseteq \mathbb{R}^d$. Then, $\lvert P_{H_t}(A)\setminus P^{H_t}(C)\rvert =\lvert B_R(0)\setminus \overline{B}_r(2t e_1)\rvert> \lvert B_R(0)\setminus \overline{B}_r(0)\rvert$. Since $P_H$ is measure preserving, we conclude that $P_H(A)\setminus P^H(C)\neq P_H(A\setminus C)$.
	\end{remark}
	\subsection{Polarization of functions} Now, we consider the	polarization of functions defined on a domain $\Omega \subseteq \mathbb{R}^d$ and discuss some important properties of polarization of functions, such as Lipschitz continuity, non-expansivity, norm preserving property. Recall the definition of polarization of functions (from Definition~\ref{defn-Polarization}). 
    \begin{proposition}\label{propo_support}
	   Let $H\in \mathscr{H}$, and $u\in \mathcal{C}(\mathbb{R}^d)$ be  a non-negative function. Then 
	   \begin{align*}
		{\rm supp} \left(P_H(u)\right) = P_H({\rm supp}\left(u\right)).
	   \end{align*}
    \end{proposition}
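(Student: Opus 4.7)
The plan is to exploit the continuity of $u$ to replace the closure defining $\mathrm{supp}(u)$ by the open set $\{u>0\}$, then to transfer the claim to a purely set-theoretic statement about the interaction of $P_H$ with closures. First I would observe that $P_H(u)$ is itself a continuous function on $\mathbb{R}^d$, since $u$ and $\sigma_H$ are continuous and pointwise $\max/\min$ of two continuous functions is continuous on each of the (relatively) closed sets $\overline H$ and $\overline{H}^{\mathsf c}$, and agrees on $\partial H$ where $\sigma_H$ is the identity. With $P_H(u)$ continuous, $\mathrm{supp}(P_H(u))=\overline{\{P_H(u)>0\}}$.

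Second, I would establish the pointwise identity
\begin{align*}
\{P_H(u)>0\}=P_H(\{u>0\}).
\end{align*}
For $x\in H$, $P_H(u)(x)>0$ iff $u(x)>0$ or $u(\sigma_H(x))>0$, i.e.\ $x\in (\{u>0\}\cup\sigma_H(\{u>0\}))\cap H$; for $x\in H^{\mathsf c}$, $P_H(u)(x)>0$ iff both $u(x)>0$ and $u(\sigma_H(x))>0$, i.e.\ $x\in \{u>0\}\cap\sigma_H(\{u>0\})$. Together these give $P_H(\{u>0\})$ exactly. Writing $A=\{u>0\}$ (which is open by continuity of $u$) and $\overline{A}=\mathrm{supp}(u)$, the proposition reduces to the set identity $\overline{P_H(A)}=P_H(\overline A)$.

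Third, I would handle the two inclusions of this set identity separately. The inclusion $\overline{P_H(A)}\subseteq P_H(\overline A)$ is immediate: $P_H(A)\subseteq P_H(\overline A)$ by monotonicity (Proposition~\ref{Propo-Obs}(ii)), and $P_H(\overline A)$ is closed since $\overline A$ is closed (Proposition~\ref{Propo-Obs}(i)); so the closure of $P_H(A)$ lies inside $P_H(\overline A)$. For the reverse inclusion, I would take $x\in P_H(\overline A)$ and split into cases. If $x\in H$, then $x\in\overline A$ or $\sigma_H(x)\in\overline A$; pick a sequence $y_n\to x$ in $A$ (respectively $\sigma_H(y_n)\to x$ with $y_n\in A$), and use that $H$ is open and $\sigma_H$ is a homeomorphism to place the terms eventually in $P_H(A)\cap H=(A\cup\sigma_H(A))\cap H$. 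If $x\in\partial H$, choose balls $B_r(x)$, which are $\sigma_H$-invariant since $\sigma_H(x)=x$, and apply the same density argument. If $x$ lies in the interior of $H^{\mathsf c}$, both $x\in\overline A$ and $\sigma_H(x)\in\overline A$; here I would combine density of $A$ in $\overline A$ near both $x$ and $\sigma_H(x)$ with continuity of $u$ to produce $y_n\to x$ lying simultaneously in $A$ and in $\sigma_H(A)$, i.e.\ in $P_H(A)\cap H^{\mathsf c}$.

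The main obstacle is precisely the last case: to extract from $x\in\overline A\cap \sigma_H(\overline A)$ a sequence in $A\cap\sigma_H(A)$ converging to $x$. One cannot do this purely at the set level, since in general $\overline{A\cap \sigma_H(A)}$ is strictly smaller than $\overline A\cap \sigma_H(\overline A)$. The continuity of $u$ is what makes the passage work: by applying the definition of $\mathrm{supp}(u)$ to balls $B_{1/n}(x)$ (which, by choice of radius, can be paired with their symmetric counterparts $B_{1/n}(\sigma_H(x))=\sigma_H(B_{1/n}(x))$), one uses that $u$ attains positive values on each, and then matches these using the function's continuity at $x$ and $\sigma_H(x)$ to obtain a single point $y_n$ with $u(y_n)>0$ and $u(\sigma_H(y_n))>0$. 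I expect this is where the nonnegativity hypothesis and the continuity of $u$ (rather than merely the openness of $\{u>0\}$) are essentially used.
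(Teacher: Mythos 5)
Your handling of the inclusion ${\rm supp}(P_H(u))\subseteq P_H({\rm supp}(u))$ is correct and parallels the paper's argument: you show $\{P_H(u)>0\}=P_H(\{u>0\})$ and use that $P_H(\overline{A})$ is closed and contains $P_H(A)$, whereas the paper shows $P_H(u)\equiv 0$ on $P^H(F^{\mathsf{c}})=(P_H(F))^{\mathsf{c}}$ for $F={\rm supp}(u)$. Both routes yield the same one-sided containment, and your observation that $P_H(u)$ is itself continuous (pasting lemma over $\overline{H}$ and $H^{\mathsf{c}}$) is also fine.

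The difficulty you isolate in the case $x$ lying in the interior of $H^{\mathsf{c}}$, however, cannot be repaired by continuity: the reverse inclusion is simply false, and the obstruction you describe is exactly where it fails. The proposed fix --- extracting a single $y_n\to x$ with both $u(y_n)>0$ and $u(\sigma_H(y_n))>0$ --- breaks down because near such an $x$ the set $\{u>0\}$ and its reflection $\sigma_H(\{u>0\})$ may be disjoint, approaching $x$ from opposite sides. Concretely, take $d=1$, $H=(-\infty,0)$, and $u(y)=\max\{0,\sin(\pi y)\}$ for $|y|\le 3$ and $u(y)=0$ otherwise, which is continuous, non-negative and compactly supported. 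Then $F={\rm supp}(u)=[-2,-1]\cup[0,1]\cup[2,3]$, and for every $x\ge 0$ one computes $P_H(u)(x)=\min\bigl\{\max\{0,\sin\pi x\},\max\{0,-\sin\pi x\}\bigr\}=0$, so ${\rm supp}(P_H(u))=[-3,0]$. On the other hand $1,2\in F\cap\sigma_H(F)\subseteq P_H(F)$, so $P_H({\rm supp}(u))=[-3,0]\cup\{1,2\}$, which strictly contains ${\rm supp}(P_H(u))$. The points $1$ and $2$ are precisely of the type you worried about: each lies in $\overline{A}\cap\sigma_H(\overline{A})$ with $A=\{u>0\}$, but $A$ and $\sigma_H(A)$ approach them from opposite sides and never intersect nearby.

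For the record, the paper's own proof is also incomplete here: it establishes the inclusion carefully and then dismisses the converse as ``easy to see from the definition,'' which the example above contradicts. Only the inclusion ${\rm supp}(P_H(u))\subseteq P_H({\rm supp}(u))$ is actually used downstream (in Proposition~\ref{propo:pol_Lip1}), so nothing in the paper is harmed, but the equality as stated should be replaced by an inclusion. Your instinct to be suspicious was right; the correct move is to flag the converse as false with a counterexample rather than to try to force it through.
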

    \begin{proof} 
	   Let $F={\rm supp}\left(u\right)$. Clearly $u=u\circ \sigma _H=0$ on $F^\mathsf{c}\cap \sigma_H (F^\mathsf{c})$, and $u=0$ or $u\circ \sigma _H=0$ on $F^\mathsf{c} \cup \sigma _H(F^\mathsf{c})$. Since $u\geq 0$, by the definition, we get $P_H(u)=0$ on $\big[\left(F^\mathsf{c} \cup \sigma _H (F^\mathsf{c})\right)\cap H^\mathsf{c}\big]\cup \big[F^\mathsf{c}\cap \sigma_H(F^\mathsf{c}) \big] = P^H(F^\mathsf{c})$. Now, since $P^H(F^\mathsf{c}) = \left(P_H(F)\right)^\mathsf{c}$ (from~Proposition~\ref{Propo-Obs}-(v)), we get ${\rm supp}\left(P_H(u)\right)\subseteq P_H(F).$ The other way inclusion is easy to see from the definition. Therefore, ${\rm supp} \left(P_H(u)\right) = P_H({\rm supp}\left(u\right))$.
	\end{proof}
	\begin{remark}\label{rmk-1.7}
	   Similarly, for non-positive function $u\in \mathcal{C}(\mathbb{R}^d)$, ${\rm supp}\left(P_H(u)\right)\subseteq P^H({\rm supp}\left(u\right)).$ More generally, for any function $u\in \mathcal{C}(\mathbb{R}^d)$ we have ${\rm supp}\left(P_H(u)\right)=P_H({\rm supp}\left(u^+\right)) \cup P^H({\rm supp}\left(u^-\right))$ (see~\cite[Section-2]{Bobkov-Sergey19}), where $u^+=\max\{0,u\}$ and $u^-=\min\{0,u\}$.
    \end{remark}
    The H\"older continuity of polarizations of H\"older continuous functions defined on $\mathbb{R}^d$ is given in \cite[Corollary 3.1]{BrockSolynin2000}. The same result holds for the functions defined on a symmetric domain.
    \begin{proposition}\label{propo:pol_Lip}
	   Let $\Omega _0\subseteq \mathbb{R}^d$ be a domain and $H\in \mathscr{H}$ such that $\sigma _H(\Omega _0)=\Omega _0$. If $u\in \mathcal{C}^{0,\alpha } (\Omega _0)$ for some $\alpha \in (0,1]$, then $P_H u \in \mathcal{C}^{0,\alpha}(\Omega _0)$.
	\end{proposition}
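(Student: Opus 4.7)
The plan is to directly estimate $|P_H u(x)-P_H u(y)|$ by case analysis on the positions of $x,y$ relative to $H$, relying only on the symmetry $\sigma_H(\Omega_0)=\Omega_0$ and the fact that $\sigma_H$ is an isometry. Since $\sigma_H(\Omega_0)=\Omega_0$, the reflected function $u\circ \sigma_H$ is defined on all of $\Omega_0$ and inherits the same H\"older seminorm $[u]_{0,\alpha}$ as $u$. By Definition~\ref{defn-Polarization}, $P_H u=\max\{u,u\circ \sigma_H\}$ on $\Omega_0\cap H$ and $P_H u=\min\{u,u\circ \sigma_H\}$ on $\Omega_0\cap H^\mathsf{c}$; both formulas reduce to $u$ on $\Omega_0\cap \partial H$, so continuity of $P_H u$ across $\partial H$ is automatic. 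It therefore suffices to show
\begin{align*}
    |P_H u(x)-P_H u(y)|\le [u]_{0,\alpha}\,|x-y|^\alpha,\quad x,y\in \Omega_0.
\end{align*}

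If $x,y$ lie on the same closed side of $\partial H$, the elementary inequality $|\max(a_1,b_1)-\max(a_2,b_2)|\le \max(|a_1-a_2|,|b_1-b_2|)$ (and its min analogue), applied with $(a_1,b_1)=(u(x),u(\sigma_H x))$ and $(a_2,b_2)=(u(y),u(\sigma_H y))$, combined with $|x-y|=|\sigma_Hx-\sigma_Hy|$, immediately yields the desired bound.

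The interesting case is $x\in H$, $y\in H^\mathsf{c}$. Writing $x'=\sigma_Hx$, $y'=\sigma_Hy$, pick $a_*\in\{x,x'\}$ and $b_*\in\{y,y'\}$ realizing $P_H u(x)=u(a_*)$ and $P_H u(y)=u(b_*)$; then $|P_H u(x)-P_H u(y)|=|u(a_*)-u(b_*)|\le [u]_{0,\alpha}|a_*-b_*|^\alpha$. The key geometric observation is that when $x$ and $y$ lie on opposite sides of $\partial H$, each of the four distances $|x-y|$, $|x-y'|$, $|x'-y|$, $|x'-y'|$ is at most $|x-y|$: for any $x_0\in\partial H$ and unit normal $\eta$ to $\partial H$ pointing into $H$, one computes $|x-y'|^2-|x-y|^2=4\langle x-x_0,\eta\rangle\langle y-x_0,\eta\rangle$, which is $\le 0$ because $\langle x-x_0,\eta\rangle>0$ while $\langle y-x_0,\eta\rangle\le 0$; the bound on $|x'-y|$ is symmetric, and $|x'-y'|=|x-y|$. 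Since $(a_*,b_*)\in\{x,x'\}\times\{y,y'\}$, this forces $|a_*-b_*|\le |x-y|$ and closes the estimate.

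The main obstacle is precisely this opposite-side case. The natural alternative of picking $z\in [x,y]\cap \partial H$ (where $P_H u(z)=u(z)$) and splitting by the triangle inequality would require the segment $[x,y]$ to lie in $\Omega_0$, which can fail for non-convex symmetric domains. The case analysis above sidesteps this difficulty by exploiting the distance-shrinkage property of the reflection $\sigma_H$ when $x$ and $y$ sit on opposite sides of $\partial H$, which is the mechanism making polarization compatible with H\"older regularity.
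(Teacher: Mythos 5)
Your proposal is correct and follows essentially the same route as the paper's proof: the same case split into same-side and opposite-side pairs, the same $\max$/$\min$ Lipschitz bound for the same-side case, and the same key observation that $|x-y'|=|x'-y|\le|x-y|$ when $x,y$ straddle $\partial H$. The only difference is cosmetic: you supply the short computation $|x-y'|^2-|x-y|^2=4\langle x-x_0,\eta\rangle\langle y-x_0,\eta\rangle\le 0$, which the paper simply asserts.
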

	\begin{proof}
	   For $u\in \mathcal{C}^{0,\alpha }(\Omega _0)$, there exists $L>0$ such that $|u(x)-u(y)|\leq L |x-y|^\alpha \mbox{ for any } x,y \in \Omega _0.$ For simplicity of notation, we denote the reflection $\sigma_H(z)$ of $z\in \mathbb{R}^d$ with respect to $\partial H$ by $z^*$. Let $x,y\in \Omega _0$. Since $\sigma _H(\Omega _0)=\Omega _0,$ both $x^*, y^* \in \Omega _0$, and ${\rm supp}\left(P_H(u)\right)\subseteq \Omega _0$ (from Remark~\ref{rmk-1.7}). If both $x, y \in H$, then 
	   \begin{align*}
	       |P_H u(x)-P_H u(y)|&\leq \left|\max \big\{u(x), u(x^*)\big\}- \max \big\{u(y),u(y^*)\big\}\right|\\
	       &\leq \max \Big\{|u(x)-u(y)|, |u(x^*)-u(y^*)|\Big\}\leq L |x-y|^\alpha .
	   \end{align*}
	   Similarly, if $x,y\in H^\mathsf{c}$ then $|P_H u(x)-P_H u(y)|\leq L |x-y|^\alpha .$ Now, if $x\in H$ and $y\in H^\mathsf{c}$ then $|x-y^*|=|x^*-y|\leq |x-y|$. Therefore
	   \begin{align*}
	 	   |P_H u(x)-P_H u(y)|&\leq \left|\max \big\{u(x), u(x^*)\big\} - \min \big\{u(y), u(y^*)\big\}\right|\\
	 	   &\leq \max \Big\{|u(x)-u(y)|, |u(x)-u(y^*)|, |u(y)-u(x^*)|, |u(x^*)-u(y^*)| \Big\}\\
	 	   &\leq L |x-y|^\alpha . \qedhere
	  \end{align*}	
    \end{proof}
    Now, we state the following non-expansive property of polarization, see \cite[Theorem~3.1]{BrockSolynin2000} and \cite[Theorem 3, Corollary 1]{Crowe86}.
    \begin{proposition}\label{propo:pol_nonexp}
	   Let $\Omega _0\subset \mathbb{R}^d$, and $j$ be any Young function. Then, for any $H \in \mathscr{H}$ and any non-negative measurable functions $u,v$ on $\Omega _0$,
	   \begin{align*}
		  \int_{P_H\Omega _0}j\left(|P_H u - P_H v|\right) {\,\rm d}x \leq \int _{\Omega _0} j(|u-v|) {\,\rm d}x .
	   \end{align*}
	   In particular, for $j(t)=t^p, \ 1\leq p <\infty $,
	   \begin{align*}
		  \left\|P_H u -P_H v\right\|_{p,P_H\Omega _0} \leq \left\| u-v \right\|_{p,\Omega _0} \mbox{ for any non-negative } u, v\in L^p(\Omega _0).
	   \end{align*}
    \end{proposition}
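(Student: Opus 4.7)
The plan is to reduce the claimed integral inequality to a pointwise comparison on the reflection orbits $\{x, \sigma_H(x)\}$ and then invoke a classical two-point rearrangement inequality for convex functions. First I would zero-extend $u$ and $v$ to $\mathbb{R}^d$; by Proposition~\ref{propo_support}, the polarized functions are supported in $P_H(\Omega_0)$, and since $j(0) = 0$ both integrals agree with the corresponding integrals over all of $\mathbb{R}^d$. Splitting $\mathbb{R}^d = H \sqcup H^{\mathsf{c}}$ (the boundary $\partial H$ being a null set) and applying the change of variables $y = \sigma_H(x)$ on $H^{\mathsf{c}}$, and writing $x^* := \sigma_H(x)$ for brevity, the inequality reduces to
\[
\int_H \bigl[ j(|P_H u(x) - P_H v(x)|) + j(|P_H u(x^*) - P_H v(x^*)|)\bigr] {\,\rm d}x \le \int_H \bigl[ j(|u(x) - v(x)|) + j(|u(x^*) - v(x^*)|)\bigr] {\,\rm d}x.
\]
Hence it suffices to establish this inequality of integrands pointwise for a.e.\ $x \in H$.

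To perform the pointwise step, fix $x \in H$ and set $a := u(x)$, $b := u(x^*)$, $c := v(x)$, $d := v(x^*)$, all non-negative. By the definition of polarization, $(P_H u(x), P_H u(x^*))$ equals $(\max\{a,b\}, \min\{a,b\})$, and analogously for $v$. Assume without loss of generality $a \ge b$. If also $c \ge d$, the polarized pairs coincide with $(a,b)$ and $(c,d)$ on the orbit $\{x, x^*\}$, and the integrand inequality holds with equality. The only non-trivial configuration is $a \ge b$ with $c < d$, in which the claim reduces to the two-point inequality
\[
j(|a-d|) + j(|b-c|) \le j(|a-c|) + j(|b-d|).
\]

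The main obstacle is this last pointwise two-point inequality. It expresses the classical principle that, for a convex function $j$ with $j(0) = 0$, pairing two non-negative sequences in the same decreasing order minimizes the sum of $j$-values of the differences. In our setting, $\{|a-d|, |b-c|\}$ arises from the aligned pairing of the decreasing pairs $(a,b)$ and $(d,c)$, while $\{|a-c|, |b-d|\}$ comes from the anti-aligned one; a short case analysis on the relative positions of $a, b, c, d$ in $[0,\infty)$ shows that the decreasing rearrangement of the latter weakly majorizes that of the former, and Karamata's inequality applied to the convex, non-decreasing $j$ then delivers the desired comparison. This step is precisely the content of~\cite[Theorem~3.1]{BrockSolynin2000} and \cite[Theorem~3]{Crowe86}. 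Once it is in place, integrating over $H$ completes the argument for general Young $j$, and the $L^p$ statement is obtained by specialising to $j(t) = t^p$.
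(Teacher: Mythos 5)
The paper does not actually prove this proposition; it merely states the result and refers to \cite[Theorem~3.1]{BrockSolynin2000} and \cite[Theorem 3, Corollary 1]{Crowe86}, so there is no in-paper proof to compare against. Your blind proof is correct, and it is essentially the standard argument underlying those references: pass to zero-extensions (using that $P_H\widetilde{u}$ and $P_H\widetilde{v}$ vanish a.e.\ off $P_H(\Omega_0)$ and $j(0)=0$), fold $\mathbb{R}^d$ onto $H$ via the measure-preserving reflection $\sigma_H$ (discarding the null set $\partial H$), and verify the resulting two-point inequality on each orbit $\{x,\sigma_H(x)\}$. The two-point step
\[
j\bigl(\lvert\max\{a,b\}-\max\{c,d\}\rvert\bigr)+j\bigl(\lvert\min\{a,b\}-\min\{c,d\}\rvert\bigr)\le j(|a-c|)+j(|b-d|),
\]
for non-negative $a,b,c,d$, does hold: in the aligned case it is an equality, and in the anti-aligned case one checks directly (as you indicate) that $(|a-c|,|b-d|)$ weakly majorizes the pair of polarized differences, after which the Hardy--Littlewood--P\'olya inequality applies because a Young function is convex, non-decreasing, and vanishes at the origin. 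Two minor nitpicks, neither of which affects the mathematics. First, Proposition~\ref{propo_support} is stated only for continuous non-negative $u$, so quoting it for zero-extensions of merely measurable functions is technically out of scope; but the needed fact, that $P_H\widetilde{u}$ vanishes a.e.\ outside $P_H(\Omega_0)$, follows immediately from the definition of $P_H$ (for $x\in H\setminus P_H(\Omega_0)$ both $\widetilde{u}(x)$ and $\widetilde{u}(\sigma_H x)$ vanish, and similarly on $H^{\mathsf{c}}$ the minimum is $0$). Second, your closing sentence attributes the two-point lemma to \cite[Theorem~3.1]{BrockSolynin2000} and \cite[Theorem~3]{Crowe86}, but those theorems are the full integral statement, not the two-point kernel; citing them at that point collapses the argument back into the mere attribution the paper already gives, so it would be cleaner to either cite a majorization source or simply carry out the short case analysis you allude to.
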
 
    We state the following invariance property of polarizations, see~\cite[Proposition 2.3.]{Schaftingen05} and \cite[Lemma~3.1]{Weth2010}.
    \begin{proposition}\label{propo:pol_invariant}
       Let $\Omega _0\subseteq \mathbb{R}^d$ be an open set and $H\in \mathscr{H}$ such that $\sigma _H(\Omega _0)=\Omega _0$. If $u\in W^{1,p}(\Omega _0)$ then $P_H(u) \in W^{1,p}(\Omega _0)$, and 
       \begin{align}\label{eqn:pol_norm}
          \left\|u\right\|_{p} = \left\|P_H u\right\|_{p} \mbox{ and } \left\|\nabla u\right\|_{p}= \left\|\nabla P_H u\right\|_{p}.
       \end{align}
    \end{proposition}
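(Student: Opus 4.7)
The plan is to express $P_H u$ in terms of the pointwise $\max$ and $\min$ of $u$ and its reflection, and then reduce both norm identities to a decomposition of $\Omega_0$ combined with a single change of variables.

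First, let $v(x) := u(\sigma_H(x))$. Since $\sigma_H$ is an isometric involution with $\sigma_H(\Omega_0) = \Omega_0$, the standard change-of-variables for Sobolev functions gives $v \in W^{1,p}(\Omega_0)$ with $\|v\|_p = \|u\|_p$ and $|\nabla v(x)| = |\nabla u(\sigma_H(x))|$ a.e.\ (as $D\sigma_H$ is orthogonal). From Definition~\ref{defn-Polarization},
\begin{align*}
P_H u = \max(u,v) \text{ on } \Omega_0 \cap H, \qquad P_H u = \min(u,v) \text{ on } \Omega_0 \cap H^{\mathsf c}.
\end{align*}
Because $\sigma_H$ fixes $\partial H$ pointwise, $u = v$ on $\partial H \cap \Omega_0$ in the trace sense, so the two pieces have matching traces on the interface. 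The usual Sobolev gluing argument, together with the fact that the $\max$ and $\min$ of $W^{1,p}$ functions lie in $W^{1,p}$, gives $P_H u \in W^{1,p}(\Omega_0)$. The Stampacchia chain rule then yields $\nabla P_H u = \nabla u$ on $(\{u>v\}\cap H)\cup(\{u<v\}\cap H^{\mathsf c})$ and $\nabla P_H u = \nabla v$ on $(\{u<v\}\cap H)\cup(\{u>v\}\cap H^{\mathsf c})$, while on $\{u=v\}$ one has $\nabla u = \nabla v$ a.e.

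Next, I partition $\Omega_0$ into these five regions. The key observation is that $y = \sigma_H(x)$ is measure-preserving and sends $H \cap \{u<v\}$ bijectively onto $H^{\mathsf c} \cap \{u>v\}$ and vice versa, while transforming $|\nabla v(x)|^p\,dx$ into $|\nabla u(y)|^p\,dy$ and $|v(x)|^p\,dx$ into $|u(y)|^p\,dy$. Applying this substitution to the two cross-terms,
\begin{align*}
\int_{H\cap\{u<v\}} |\nabla v|^p \, dx + \int_{H^{\mathsf c}\cap\{u>v\}} |\nabla v|^p \, dx = \int_{H^{\mathsf c}\cap\{u>v\}} |\nabla u|^p \, dy + \int_{H\cap\{u<v\}} |\nabla u|^p \, dy,
\end{align*}
and adding the remaining contributions from $\{u>v\}\cap H$, $\{u<v\}\cap H^{\mathsf c}$, and $\{u=v\}$ (all of which already involve $|\nabla u|^p$) recombines into $\int_{\Omega_0}|\nabla u|^p\,dx$. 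Hence $\|\nabla P_H u\|_p = \|\nabla u\|_p$. The identical bookkeeping without gradients yields $\|P_H u\|_p = \|u\|_p$.

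The main obstacle is to justify rigorously that the piecewise description of $P_H u$ produces a function in $W^{1,p}(\Omega_0)$ and not merely in $W^{1,p}(\Omega_0 \cap H)$ and $W^{1,p}(\Omega_0 \cap H^{\mathsf c})$ separately. This hinges on the trace identity $u|_{\partial H} = v|_{\partial H}$, which forces $\max(u,v) = \min(u,v)$ along the interface and allows the two pieces to be stitched into a single Sobolev function; once this point is settled, everything else reduces to a routine case analysis and one orthogonal change of variables.
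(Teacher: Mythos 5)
Your proof is correct and takes essentially the same route as the paper: both set $v := u\circ\sigma_H$, use the fact that $u-v$ vanishes on $\partial H$ to pass from $W^{1,p}$ information on $\Omega_0 \cap H$ and $\Omega_0 \cap H^{\mathsf c}$ separately to a global $W^{1,p}$ function, and then obtain the norm identities by partitioning according to $\{u>v\}$, $\{u<v\}$, $\{u=v\}$ and applying the reflection $\sigma_H$ as an orthogonal, measure-preserving change of variables. The only cosmetic difference is that the paper packages the max/min description algebraically as $P_H u = \tfrac{1}{2}\bigl(u + v + |u-v|\mathbb{1}_{\Omega_0\cap H} - |u-v|\mathbb{1}_{\Omega_0\cap H^{\mathsf c}}\bigr)$, whereas you glue $\max(u,v)$ and $\min(u,v)$ directly across $\partial H$ via matching traces; these are the same observation in different notation.
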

    \begin{proof}
	   Let $u\in W^{1,p}(\Omega _0)$. Since $\Omega _0$ is symmetric with respect to $\partial H$, we have $v\mathrel{\mathop:}=u\circ \sigma _H \in W^{1,p}(\Omega _0)$. Moreover, using  the standard arguments we can easily show that, $ |u-v|,$ $f\mathrel{\mathop:}=|u-v|\mathbb{1}_{\Omega _0 \cap H} ,$ and $g\mathrel{\mathop:}=-|u-v|\mathbb{1}_{\Omega _0 \cap H^\mathsf{c}}$ are in $W^{1,p}(\Omega _0) $. Thus $P_H(u)=\frac{1}{2}\left(u+v+f+g\right)$ is also in $W^{1,p}(\Omega _0)$. To prove that the norms are preserved, first observe that 
	   \begin{align*}
		P_H u =\left\{
		\begin{aligned}
			u \quad \mathrm{a.e.,}& \quad \mbox{ in } \left[\left(\Omega _0 \cap H\right)\cap \{u>v\}\right]\cup \left[\left(\Omega _0 \cap H^\mathsf{c} \right) \cap \{u<v\}\right],\\
			v \quad \mathrm{a.e.,}& \quad \mbox{ in } \left[\left(\Omega _0 \cap H^\mathsf{c} \right)\cap \{u>v\}\right]\cup \left[\left(\Omega _0 \cap H\right)\cap \{u<v\}\right];
		\end{aligned}
		\right.
	\end{align*}
	\begin{align*}
		\nabla P_H u =\left\{
		\begin{aligned}
			\nabla u \quad \mathrm{a.e.,}& \quad \mbox{in } \left[\left(\Omega _0 \cap H\right)\cap \{u>v\}\right]\cup \left[\left(\Omega _0 \cap {H}^\mathsf{c} \right)\cap \{u<v\}\right],\\
			\nabla v \quad \mathrm{a.e.,}& \quad \mbox{in } \left[\left(\Omega _0 \cap {H}^\mathsf{c} \right)\cap \{u>v\}\right]\cup \left[\left(\Omega _0 \cap H\right)\cap \{u<v\}\right].
		\end{aligned}\right.
	\end{align*}
	Now, by integrating $|P_H(u)|^p$ and $|\nabla P_H(u)|^p$ over $\Omega _0$, and using $\sigma _H\left(\left(\Omega _0 \cap H\right)\cap \{u>v\}\right) = \big(\Omega _0 \cap \overline{H}^\mathsf{c}\big)\cap \{u<v\}$ we get~\eqref{eqn:pol_norm}.
	\end{proof}
	Recall that, for a domain $\Omega \subseteq \mathbb{R}^d$ and $\Gamma _{\scriptscriptstyle D}\subseteq \partial \Omega$, the Sobolev space $W^{1,p}_{\Gamma _{\scriptscriptstyle D}}(\Omega )$ is defined by
	\begin{align*}
	W^{1,p}_{\scriptscriptstyle \Gamma _{\scriptscriptstyle D}}(\Omega )=\mbox{the closure of } \mathcal{C}^{0,1}_{\scriptscriptstyle \Gamma _{\scriptscriptstyle D}} (\Omega ) \mbox{ in } W^{1,p}(\Omega ),
	\end{align*}
	where $\mathcal{C}^{0,1}_{\scriptscriptstyle \Gamma _{\scriptscriptstyle D}} (\Omega ) = \bigl\{\varphi \in \mathcal{C}^{0,1}(\Omega ): {\rm supp} \left(\phi \right)\cap \Gamma _{\scriptscriptstyle D} = \emptyset \bigr\}$. We give the analogous result of Proposition~\ref{propo:pol_invariant} for the functions in $W^{1,p}_{\scriptscriptstyle \Gamma _{\scriptscriptstyle D}} (\Omega )$ in the following proposition.  
	\begin{proposition}\label{propo:pol_Lip1}
	Let $\Omega =\Omega_{\rm out}\setminus \overline{\Omega_{\rm in}} \subset \mathbb{R}^d$ be as given in~\ref{hypothesis}, $\Gamma _{\scriptscriptstyle D}\subseteq \partial \Omega $ and $H\in \mathscr{H}_{\rm ad}$. Let $\varphi  \in \mathcal{C}^{0,1}_{\Gamma _{\scriptscriptstyle D}}(\Omega )$ be any non-negative function. 
	\begin{enumerate}[label={\rm (\roman*)}]
		\item If $\Gamma _{\scriptscriptstyle D}=\partial \Omega_{\rm out}$ and $\sigma _H(\Omega _{\rm in}) = \Omega _{\rm in}$, then $P_H(\varphi )\in \mathcal{C}^{0,1}_{\scriptscriptstyle \partial P_H(\Omega_{\rm out})} (P_H(\Omega ))$. 
		\item If $\Gamma _{\scriptscriptstyle D} = \partial \Omega_{\rm in}$ and $\sigma _H (\Omega_{\rm out}) = \Omega_{\rm out}$, then $P_H(\varphi )\in \mathcal{C}^{0,1}_{\scriptscriptstyle \partial P^H(\Omega_{\rm in})} (P_H(\Omega ))$.
	\end{enumerate}
	In both of the cases~\eqref{eqn:pol_norm} holds.
	\end{proposition}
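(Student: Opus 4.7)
My plan is to reduce both parts to Propositions~\ref{propo:pol_Lip}, \ref{propo_support}, and \ref{propo:pol_invariant} by choosing a $\sigma_H$-symmetric auxiliary domain $\Omega_0$ containing $P_H(\Omega)\cup P^H(\Omega)$, on which the zero extension $\widetilde\varphi$ of $\varphi$ restricts to a globally Lipschitz function. Once $P_H\widetilde\varphi\in\mathcal{C}^{0,1}(\Omega_0)$ is obtained from Proposition~\ref{propo:pol_Lip}, the restriction $P_H\widetilde\varphi|_{P_H(\Omega)}$ will coincide with $P_H(\varphi)$ from Definition~\ref{defn-Polarization} because both $x$ and $\sigma_H(x)$ lie in $\Omega_0$ for every $x\in P_H(\Omega)$, using $\sigma_H(P_H(\Omega))=P^H(\Omega)\subseteq \Omega_0$ together with Proposition~\ref{propo:pol-compliment1}. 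For part~(i), I take $\Omega_0=\mathbb{R}^d\setminus\overline{\Omega_{\rm in}}$, which is a domain for $d\ge 2$ and is $\sigma_H$-symmetric since $\sigma_H(\Omega_{\rm in})=\Omega_{\rm in}$. Because ${\rm supp}(\varphi)$ has positive distance from $\partial\Omega_{\rm out}$, the function $\widetilde\varphi|_{\Omega_0}$ has no jump across $\partial\Omega_{\rm out}$, while the potentially problematic jump across $\partial\Omega_{\rm in}$ is excised by removing $\overline{\Omega_{\rm in}}$ from $\Omega_0$.

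Part~(ii) is handled analogously with the simpler choice $\Omega_0=\Omega_{\rm out}$, which is $\sigma_H$-invariant by hypothesis; here $\varphi$ vanishes near $\partial\Omega_{\rm in}$, so $\widetilde\varphi|_{\Omega_0}$ is Lipschitz across $\partial\Omega_{\rm in}$. In both cases the Lipschitz regularity of $P_H\varphi$ on $P_H(\Omega)$ follows at once from Proposition~\ref{propo:pol_Lip}. The support condition is then obtained from Proposition~\ref{propo_support}: using the inclusion ${\rm supp}(P_H\varphi)\subseteq P_H({\rm supp}(\varphi))\subseteq {\rm supp}(\varphi)\cup\sigma_H({\rm supp}(\varphi))$, in part~(i) this polarized support has positive distance from $\partial\Omega_{\rm out}\cup\sigma_H(\partial\Omega_{\rm out})\supseteq\partial P_H(\Omega_{\rm out})$, while in part~(ii) it avoids $\overline{\Omega_{\rm in}}\cup \sigma_H(\overline{\Omega_{\rm in}})\supseteq P^H(\overline{\Omega_{\rm in}})$ and hence is separated from $\partial P^H(\Omega_{\rm in})$.

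The norm preservation~\eqref{eqn:pol_norm} then follows from Proposition~\ref{propo:pol_invariant} applied to $\widetilde\varphi\in W^{1,p}(\Omega_0)$: since $\widetilde\varphi\equiv 0$ outside $\Omega$ and, by the support computation above, $P_H\widetilde\varphi\equiv 0$ almost everywhere on $\Omega_0\setminus P_H(\Omega)$, the integrals of $|\widetilde\varphi|^p$ and $|\nabla\widetilde\varphi|^p$ on $\Omega_0$ collapse to those of $|\varphi|^p$ and $|\nabla\varphi|^p$ on $\Omega$, and likewise for $P_H\widetilde\varphi$ on $P_H(\Omega)$. I expect the main technical point to be the bookkeeping needed to confirm that the polarization of the Lipschitz extension on the auxiliary domain $\Omega_0$ is genuinely compatible with Definition~\ref{defn-Polarization} of $P_H\varphi$ via zero extension to $\mathbb{R}^d$; this reduces to the inclusions $P_H(\Omega),P^H(\Omega)\subseteq\Omega_0$ provided by Proposition~\ref{propo:pol-compliment1}, together with the $\sigma_H$-symmetry of $\overline{\Omega_{\rm in}}$ in case~(i) and of $\Omega_{\rm out}$ in case~(ii).
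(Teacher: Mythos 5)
Your reduction to Propositions~\ref{propo:pol_Lip} and \ref{propo:pol_invariant} via the auxiliary symmetric domain $\Omega_0 = \mathbb{R}^d\setminus\overline{\Omega_{\rm in}}$ (resp.\ $\Omega_0=\Omega_{\rm out}$) is exactly the paper's strategy, and your handling of the Lipschitz regularity and of the norm identity is sound. However, the step where you verify the support condition has a real gap. In part~(i) you argue that the polarized support has positive distance from $\partial\Omega_{\rm out}\cup\sigma_H(\partial\Omega_{\rm out})$; this claim is false in general. Since only $\sigma_H(\Omega_{\rm in})=\Omega_{\rm in}$ is assumed, the reflected boundary $\sigma_H(\partial\Omega_{\rm out})$ is free to pass through the interior of $\Omega_{\rm out}$, and nothing prevents $\mathrm{supp}(\varphi)$ from meeting it. Concretely, for any $x_0\in\partial\Omega_{\rm out}\cap H$ with $\sigma_H(x_0)\in\mathrm{supp}(\varphi)$ one has $x_0\in P_H(\mathrm{supp}(\varphi))$, so even the smaller set $P_H(\mathrm{supp}(\varphi))$ can meet $\partial\Omega_{\rm out}$; the superset $\mathrm{supp}(\varphi)\cup\sigma_H(\mathrm{supp}(\varphi))$ you pass through certainly can. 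The analogous claim in part~(ii), that $\mathrm{supp}(\varphi)\cup\sigma_H(\mathrm{supp}(\varphi))$ avoids $\overline{\Omega_{\rm in}}\cup\sigma_H(\overline{\Omega_{\rm in}})$, fails for the same reason, since only $\sigma_H(\Omega_{\rm out})=\Omega_{\rm out}$ is assumed and $\mathrm{supp}(\varphi)$ can well touch $\sigma_H(\overline{\Omega_{\rm in}})$.

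The correct argument does not pass through these unions at all. What is needed is that $\mathrm{supp}(P_H\varphi)$ be disjoint from $\partial P_H(\Omega_{\rm out})$ (resp.\ $\partial P^H(\Omega_{\rm in})$), and for this it suffices to use the monotonicity of $P_H$ and its preservation of openness and closedness (Proposition~\ref{Propo-Obs}). In (i): $M:=\mathrm{supp}(\varphi)\subset\Omega_{\rm out}$ gives $P_H(M)\subset P_H(\Omega_{\rm out})$, which is open, so $P_H(M)\cap\partial P_H(\Omega_{\rm out})=\emptyset$, and by Proposition~\ref{propo_support} the same holds for $\mathrm{supp}(P_H\varphi)$. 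In (ii): $M\subset\overline{\Omega_{\rm in}}^{\mathsf c}$ gives $P_H(M)\subset P_H(\overline{\Omega_{\rm in}}^{\mathsf c})=\bigl(P^H(\overline{\Omega_{\rm in}})\bigr)^{\mathsf c}$, which is disjoint from $\overline{P^H(\Omega_{\rm in})}\supseteq\partial P^H(\Omega_{\rm in})$. Replacing your positive-distance claim by this monotonicity argument closes the gap; the rest of your proof then goes through as in the paper.
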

	\begin{proof}
	(i) Let $\Omega _0=\mathbb{R}^d \setminus \overline{\Omega _{\rm in}}$. Then $\Omega \subset \Omega _0$, and $\sigma _H(\Omega _0)=\Omega _0$. Let $\varphi \in \mathcal{C}^{0,1}_{\scriptscriptstyle \Gamma _{\scriptscriptstyle D}} (\Omega )$ be a non-negative function, and let $\widetilde{\varphi }$ be its zero extension to $\Omega _0$. Then $\widetilde{\varphi }\in \mathcal{C}^{0,1}(\Omega _0)$, and hence by  Proposition~\ref{propo:pol_Lip}, $P_H (\widetilde{\varphi })\in \mathcal{C}^{0,1}(\Omega _0)$. Therefore,  $P_H(\varphi ) = P_H(\widetilde{\varphi }) \mathbb{1}_{P_H(\Omega )} \in \mathcal{C}^{0,1}(P_H \Omega )$. Next, we show that $P_H(\varphi )=0$ on $\partial P_H(\Omega _{\rm out})$. Let $M=\mathrm{supp}\left(\varphi \right) \subsetneq \Omega _{\rm out}.$ Since ${\rm supp} \left(P_H(\varphi ) \right) \subseteq P_H(M)$ is closed, $P_H(\Omega _{\rm out})$ is open and $P_H(M)\subset P_H(\Omega _{\rm out})$, we obtain ${\rm supp} \left(P_H(\varphi )\right)\cap \partial P_H(\Omega _{\rm out})= \emptyset $ as required.
	
	\noindent (ii) In this case, let $\Omega_0=\Omega_{\rm out}.$ For a non-negative function $\varphi \in W^{1,p}_{\scriptscriptstyle \Gamma _{\scriptscriptstyle D}}(\Omega )$, as before we get $P_H(\varphi)= P_H(\widetilde{\varphi }) \mathbb{1}_{P_H(\Omega )} \in \mathcal{C}^{0,1}(P_H(\Omega ))$. Let $M={\rm supp}\left(\varphi\right).$ Then $M\cap \overline{\Omega_{\rm in}} = \emptyset $ and $M\subset \overline{\Omega_{\rm in}}^\mathsf{c}.$ Now, using Proposition~\ref{Propo-Obs} we obtain
	\begin{align*}
	    P_H(M)\subset P_H(\overline{\Omega_{\rm in}}^\mathsf{c})\subseteq P_H(\Omega_{\rm in}^\mathsf{c})=\left(P^H(\Omega _{\rm in})\right)^\mathsf{c}.
	\end{align*}
	Since ${\rm supp}\left(P_H(\varphi ) \right) \subseteq P_H(M)$ is closed, and $P^H(\Omega _{\rm in})$ is open, we get ${\rm supp} \left(P_H(\varphi )\right)\cap \partial P^H(\Omega _{\rm in})= \emptyset .$ Therefore, $P_H(\varphi )=0$ on $\partial P^H(\Omega _{\rm in})$.
	\end{proof}
	Using the standard approximation techniques and Proposition~\ref{propo:pol_nonexp} (the non-expansivity of polarizations), we can prove the following analogous result of Proposition~\ref{propo:pol_invariant}, for the functions in $W^{1,p}_{\scriptscriptstyle \Gamma _{\scriptscriptstyle D}}(\Omega )$.
	\begin{proposition}\label{propo-pol-invariant}
	Let $\Omega ,$ $\Gamma _{\scriptscriptstyle D}\subseteq \partial \Omega $ and $H$ be as given in Proposition~\ref{propo:pol_Lip1}. Let $u\in W^{1,p}_{\scriptscriptstyle \Gamma _{\scriptscriptstyle D}}(\Omega )$ be any non-negative function.
	\begin{enumerate}[label={\rm (\roman*)}]
		\item If $\Gamma _{\scriptscriptstyle D}=\partial \Omega_{\rm out}$ and $\sigma _H(\Omega _{\rm in}) = \Omega _{\rm in}$, then $P_H(u)\in W^{1,p}_{\scriptscriptstyle \partial P_H (\Omega_{\rm out})}(P_H(\Omega ))$. 
		\item If $\Gamma _{\scriptscriptstyle D}=\partial \Omega_{\rm in}$ and $\sigma _H (\Omega_{\rm out}) = \Omega_{\rm out}$, then $P_H(u)\in W^{1,p}_{\scriptscriptstyle \partial P^H (\Omega_{\rm in})}(P_H(\Omega ))$. 
	\end{enumerate}
	In both of the cases~\eqref{eqn:pol_norm} holds.
	\end{proposition}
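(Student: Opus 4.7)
The plan is to reduce the $W^{1,p}$ case to the Lipschitz case handled in Proposition~\ref{propo:pol_Lip1} via approximation. Given a non-negative $u\in W^{1,p}_{\scriptscriptstyle \Gamma_{\scriptscriptstyle D}}(\Omega)$, I would first choose non-negative $\varphi_n\in \mathcal{C}^{0,1}_{\scriptscriptstyle \Gamma_{\scriptscriptstyle D}}(\Omega)$ with $\varphi_n\to u$ in $W^{1,p}(\Omega)$; this is possible because truncating a general approximating sequence by its positive part preserves both the Lipschitz regularity and the support-vs-$\Gamma_{\scriptscriptstyle D}$ condition, while $\varphi_n^+\to u^+=u$ in $W^{1,p}$ by continuity of the positive part.

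Next, pass to a symmetric enlargement of $\Omega$. Set $\Omega_0=\mathbb{R}^d\setminus \overline{\Omega_{\rm in}}$ in case (i) and $\Omega_0=\Omega_{\rm out}$ in case (ii); in either case $\sigma_H(\Omega_0)=\Omega_0$ by the symmetry hypothesis. Because each $\varphi_n$ vanishes on a neighborhood of $\Gamma_{\scriptscriptstyle D}$, its zero extension $\widetilde{\varphi_n}$ to $\Omega_0$ lies in $\mathcal{C}^{0,1}(\Omega_0)\cap W^{1,p}(\Omega_0)$, with weak gradient equal to $\nabla \varphi_n$ on $\Omega$ and to $0$ elsewhere. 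Hence $\{\widetilde{\varphi_n}\}$ is Cauchy in $W^{1,p}(\Omega_0)$, so $\widetilde{u}\in W^{1,p}(\Omega_0)$. Applying Proposition~\ref{propo:pol_invariant} on $\Omega_0$ gives $P_H(\widetilde{u})\in W^{1,p}(\Omega_0)$ together with the norm identities $\|P_H\widetilde{u}\|_p=\|\widetilde{u}\|_p$ and $\|\nabla P_H\widetilde{u}\|_p=\|\nabla \widetilde{u}\|_p$, and the analogous identities for each $\widetilde{\varphi_n}$.

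The main obstacle is upgrading the $L^p$-non-expansivity to strong $W^{1,p}$-convergence of $P_H\widetilde{\varphi_n}\to P_H\widetilde{u}$. Proposition~\ref{propo:pol_nonexp} gives $\|P_H\widetilde{\varphi_n}-P_H\widetilde{u}\|_{p,\Omega_0}\leq \|\widetilde{\varphi_n}-\widetilde{u}\|_{p,\Omega_0}\to 0$, hence $L^p$-convergence. Combined with the identity $\|\nabla P_H\widetilde{\varphi_n}\|_p=\|\nabla \widetilde{\varphi_n}\|_p\to \|\nabla \widetilde{u}\|_p=\|\nabla P_H\widetilde{u}\|_p$ from the previous paragraph, the sequence $\{P_H\widetilde{\varphi_n}\}$ is bounded in $W^{1,p}(\Omega_0)$ with full $W^{1,p}$-norm converging to that of $P_H\widetilde{u}$. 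Weak compactness identifies the weak limit as $P_H\widetilde{u}$, and the uniform convexity of $W^{1,p}(\Omega_0)$ for $p\in (1,\infty)$ (Radon--Riesz property) promotes weak convergence plus norm convergence to strong convergence in $W^{1,p}(\Omega_0)$.

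Finally, restrict to $P_H(\Omega)$: since $P_H(u)$ is by definition the restriction of $P_H(\widetilde{u})$ to $P_H(\Omega)$ and each $P_H(\varphi_n)$ is the corresponding restriction of $P_H(\widetilde{\varphi_n})$, the strong $W^{1,p}(\Omega_0)$-convergence descends to strong $W^{1,p}(P_H(\Omega))$-convergence $P_H(\varphi_n)\to P_H(u)$. By Proposition~\ref{propo:pol_Lip1}, each $P_H(\varphi_n)$ lies in $\mathcal{C}^{0,1}_{\scriptscriptstyle \partial P_H(\Omega_{\rm out})}(P_H(\Omega))$ in case (i) and in $\mathcal{C}^{0,1}_{\scriptscriptstyle \partial P^H(\Omega_{\rm in})}(P_H(\Omega))$ in case (ii), so the $W^{1,p}$-limit $P_H(u)$ belongs to the corresponding $W^{1,p}_{\scriptscriptstyle \Gamma_{\scriptscriptstyle D}}$-space. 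The identities in~\eqref{eqn:pol_norm} then follow from the norm-preserving identities on $\Omega_0$, since the zero-extension and restriction steps alter neither the $L^p$ norm nor the $L^p$ norm of the gradient.
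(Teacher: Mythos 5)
Your proof is correct and follows exactly the route the paper sketches in its one-line remark preceding this proposition: approximation by non-negative Lipschitz functions with the required support condition, zero extension to the symmetric set $\Omega_0$, non-expansivity (Proposition~\ref{propo:pol_nonexp}) for $L^p$-convergence of the polarized sequence, and a weak-compactness plus Radon--Riesz argument to upgrade to strong $W^{1,p}(\Omega_0)$-convergence, which then descends to $P_H(\Omega)$ via Proposition~\ref{propo:pol_Lip1}. The paper gives no further detail, so your write-up is a faithful and complete elaboration of the intended ``standard approximation'' argument.
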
 
	\subsection{Regularity results and Strong comparison principles}
	Next, we recall a few regularity results for the eigenfunctions. Using Moser type iteration arguments~\cite[Proposition~1.2]{Barles88} and the arguments from~\cite[Remark~2.8]{Bensoussan02}, we get that the eigenfunctions are in $L^q$ for any $q\in [1,\infty ]$. Now, the local $\mathcal{C}^{1,\alpha }$-regularity results of~\cite[Theorem~1 and 2]{DiBenedetto83} give the following boundary regularity of the eigenfunctions.
	\begin{proposition}
	Let $\Omega \subseteq \mathbb{R}^d$ be a Lipschitz domain and $1<p<\infty .$ Let $u\in W^{1,p}_{\rm loc}(\Omega )\cap L^\infty _{\rm loc}(\Omega )$ be a weak solution of $-\Delta _p u = \lambda |u|^{p-2}u$ for some $\lambda \in \mathbb{R}$. Then there exists $\alpha \in (0,1)$ such that $u\in \mathcal{C}^{1,\alpha }_{\rm loc}(\Omega ) \cap \mathcal{C}^{0,\alpha } (\overline{\Omega })$.
	\end{proposition}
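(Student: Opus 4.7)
My plan is to split the regularity claim into its interior and boundary components and reduce each to a known theorem, after first flattening the nonlinearity via the hypothesis that $u\in L^\infty_{\rm loc}(\Omega)$.

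The first observation is that the right-hand side $f := \lambda |u|^{p-2}u$ lies in $L^\infty_{\rm loc}(\Omega)$ whenever $u\in L^\infty_{\rm loc}(\Omega)$, so $u$ can be regarded as a local weak solution of the inhomogeneous equation $-\Delta_p u = f$ with a locally bounded source. This puts the problem exactly in the framework of DiBenedetto's interior regularity theorem (Theorem~1 of \cite{DiBenedetto83}), which yields the existence of some $\alpha_1\in(0,1)$ and the bound $u\in \mathcal{C}^{1,\alpha_1}_{\rm loc}(\Omega)$. This takes care of the first half of the conclusion.

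For the global Hölder continuity $u\in \mathcal{C}^{0,\alpha}(\overline{\Omega})$, I would proceed in two steps. First, upgrade $L^\infty_{\rm loc}$ to a global $L^\infty$ bound on $\Omega$: since $\Omega$ is a bounded Lipschitz domain and the relevant boundary data (Dirichlet/mixed, as arising for the eigenvalue problem in the paper) guarantees $u\in W^{1,p}_{\Gamma_D}(\Omega)$, the Moser-type iteration argument referenced in the excerpt (Proposition~1.2 of~\cite{Barles88} together with Remark~2.8 of~\cite{Bensoussan02}) gives $u\in L^\infty(\Omega)$. Hence $f\in L^\infty(\Omega)$. Second, feed this global bound into the boundary Hölder regularity theorem for quasilinear equations of $p$-Laplace type on Lipschitz domains (Theorem~2 of~\cite{DiBenedetto83}, in the formulation extended by Lieberman), which delivers the existence of some $\alpha_2\in(0,1)$ with $u\in \mathcal{C}^{0,\alpha_2}(\overline{\Omega})$. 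Taking $\alpha := \min\{\alpha_1,\alpha_2\}$ yields the claimed exponent.

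The main technical obstacle is the boundary regularity step, because it must be carried out uniformly along a merely Lipschitz boundary and must accommodate both the Dirichlet portion $\Gamma_D$ (on which $u=0$) and the Neumann portion $\Gamma_N$ (on which $\partial_n u=0$); this is precisely what Lieberman's barrier construction handles, but it is the only part of the argument that is not a direct citation. The interior step and the passage from $L^\infty_{\rm loc}$ to $f\in L^\infty_{\rm loc}$ are routine consequences of the $L^\infty$ bounds already in place.
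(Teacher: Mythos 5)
Your proof is correct and follows essentially the same route as the paper: the paper's (informal, two-sentence) justification is exactly Moser iteration via \cite[Proposition~1.2]{Barles88} and \cite[Remark~2.8]{Bensoussan02} to get $L^\infty$, then \cite[Theorems~1 and 2]{DiBenedetto83} for $\mathcal{C}^{1,\alpha}_{\rm loc}(\Omega)$ and $\mathcal{C}^{0,\alpha}(\overline{\Omega})$. Your added remark that DiBenedetto's Theorem~2, stated for smooth boundaries, must be supplemented by Lieberman-type barrier estimates to cover a merely Lipschitz $\partial\Omega$ with mixed Dirichlet/Neumann data is a fair and slightly more careful reading than the paper's bare citation, but the underlying argument is the same.
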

	The following strong comparison principle for the distributional solutions of the $p$-Laplace operator is given in~\cite[Theorem~1.4]{Sciunzi2014}.
	\begin{proposition}\label{SCP-p}
	Let $\Omega \subset \mathbb{R}^d$ be a bounded smooth domain and  $\frac{2d+2}{d+2}<p<\infty $. Let $u,v\in C^1(\overline{\Omega })$ be positive distributional solutions of $-\Delta _p u-g(u)=0$ in $\Omega $, for a non-negative Lipschitz function $g$ on $[0,\infty )$ with $g(s)>0$ for $s>0$. If $u\leq v$ in $\Omega $, then either $u<v$ in $\Omega $ or $u \equiv v$ in $\Omega $.
	\end{proposition}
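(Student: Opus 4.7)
My plan is to linearize the difference of the two equations and apply a strong maximum principle to the resulting linear elliptic equation for $w := v - u \geq 0$. Setting $u_t := u + tw$ for $t \in [0,1]$ and applying the fundamental theorem of calculus to the map $t \mapsto |\nabla u_t|^{p-2} \nabla u_t$, I obtain
\begin{equation*}
|\nabla v|^{p-2}\nabla v - |\nabla u|^{p-2}\nabla u \;=\; A(x)\,\nabla w, \qquad A(x)=\int_0^1 |\nabla u_t|^{p-2}\bigl(I+(p-2)\,\hat{\xi}_t\otimes \hat{\xi}_t\bigr)\,dt,
\end{equation*}
with $\hat{\xi}_t = \nabla u_t / |\nabla u_t|$ (defined wherever $\nabla u_t \neq 0$). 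Similarly, $g(v)-g(u) = c(x)\,w$ for some $c \in L^\infty(\Omega)$ by the Lipschitz bound on $g$. Subtracting the two PDEs, $w$ then satisfies the linear divergence-form equation $-\mathrm{div}(A(x)\nabla w) = c(x)\,w$ weakly in $\Omega$, with $w \ge 0$.

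On the open set $\Omega \setminus Z$, where $Z := \{\nabla u = 0\}\cup\{\nabla v=0\}$ is the joint critical set, the matrix $A(x)$ is uniformly positive definite on compact subsets (its eigenvalues are bounded below by $\min\{1,p-1\}\int_0^1 |\nabla u_t|^{p-2}\,dt$), so the linear equation is locally uniformly elliptic with a bounded zeroth-order coefficient. The classical strong maximum principle for linear equations then yields, in each connected component $U$ of $\Omega \setminus Z$, the dichotomy: either $w > 0$ throughout $U$, or $w \equiv 0$ throughout $U$.

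The main obstacle, and precisely where the restriction $p > \frac{2d+2}{d+2}$ enters, is propagating this local dichotomy across the critical set $Z$, which for degenerate-elliptic solutions can be thick enough to disconnect $\Omega$. My plan here is to invoke the higher-summability theory of Damascelli--Sciunzi for solutions of the $p$-Laplacian valid in this range of $p$: one has $u, v \in W^{2,2}_{\mathrm{loc}}(\Omega)$ together with the fact that the weight $|\nabla u_t|^{p-2}$ lies in a local Muckenhoupt $A_2$ class, uniformly in $t \in [0,1]$. Combined with the standard Fabes--Kenig--Serapioni theory, these ingredients yield a weighted Poincar\'e inequality and a weak Harnack inequality for $w$ that remain valid across $Z$. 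Consequently, if $w$ vanishes at any interior point, the weak Harnack forces $w \equiv 0$ in a full neighborhood, and by the connectedness of $\Omega$ we conclude $w \equiv 0$ throughout $\Omega$; otherwise, $w > 0$ in $\Omega$ and hence $u < v$. The delicate interplay between the potentially nontrivial critical set and the degenerate weighted estimates is the technical heart of the argument, and this is exactly the content encoded in the cited work.
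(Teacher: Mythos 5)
The paper does not prove this proposition itself; it is stated as a direct quotation of Theorem~1.4 of the cited reference (Sciunzi~2014), and the threshold $p>\tfrac{2d+2}{d+2}$ is inherited from that theorem. Your sketch is therefore best read as a reconstruction of the Damascelli--Sciunzi strategy underlying the cited result: linearize along the segment $u_t=u+tw$, obtain a degenerate-weight linear divergence-form equation for $w=v-u$, then use $W^{2,2}_{\mathrm{loc}}$ regularity together with the local Muckenhoupt-$A_2$ property of the weight to push a weak Harnack inequality across the joint critical set. This is the same route as the cited proof, not a genuinely different one, so the proposal and the paper are in agreement modulo the fact that the paper outsources the argument.

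One step in your outline deserves a small correction. On $\Omega\setminus Z$ the integrand $|\nabla u_t|^{p-2}\bigl(I+(p-2)\,\hat{\xi}_t\otimes\hat{\xi}_t\bigr)$ is \emph{not} pointwise uniformly elliptic for each $t$: the convex combination $\nabla u_t=(1-t)\nabla u+t\nabla v$ can vanish for an interior value of $t$ even when both $\nabla u$ and $\nabla v$ are nonzero (for instance when they are antiparallel), and for $p>2$ the integrand then degenerates. What actually delivers local uniform ellipticity of $A$ away from $Z$ is the standard two-sided comparison
\begin{equation*}
\int_0^1 |\nabla u_t|^{p-2}\,dt \;\asymp\; \bigl(|\nabla u|+|\nabla v|\bigr)^{p-2},
\end{equation*}
valid for every $p>1$ with constants depending only on $p$. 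This gives the pointwise lower bound for the $t$-averaged weight on compacts of $\Omega\setminus Z$, and globally it identifies the relevant weight as $(|\nabla u|+|\nabla v|)^{p-2}$, whose local $A_2$-property is exactly what the Damascelli--Sciunzi summability estimates (which require the right-hand side $g(u)$ to be strictly positive, here guaranteed by $u>0$ and $g>0$ on $(0,\infty)$) are used to control. You should invoke this comparison explicitly rather than the pointwise lower bound on the integrand.
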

    \section{Strict {F}aber-{K}rahn type inequality under polarization}\label{FK type}
    In this section, we give A proof for Theorem~\ref{thm~1.2}. Recall the following two subsets of $P_H(\Omega ) \cap H$: 
    \begin{align*}
	A_H =\Omega ^\mathsf{c} \cap \sigma _H(\Omega ) \cap H \mbox{ and } B_H = \Omega \cap \sigma _H (\Omega ^\mathsf{c}) \cap H.
	\end{align*} 
	We need the following lemma.
	\begin{lemma}\label{lemma~3.1}
	Let $\Omega=\Omega _{\rm out}\setminus \overline{\Omega _{\rm in}}  \subset \mathbb{R}^d$ be a domain as given in~\ref{hypothesis}, and $H\in \mathscr{H}_{\rm ad}$. Then $\overline{\Omega \cap H} \cap \overline{A_H}\subseteq \partial \Omega $. Furthermore, 
	\begin{enumerate}[label={\rm (\roman*)}]
		\item if $\sigma_H(\Omega _{\rm in})=\Omega _{\rm in}$ then $\overline{\Omega \cap H} \cap \overline{A_H}\subseteq \partial \Omega _{\rm out}$;
		\item if $\sigma_H(\Omega _{\rm out})=\Omega _{\rm out}$ then $\overline{\Omega \cap H} \cap \overline{A_H}\subseteq \partial \Omega _{\rm in}$.
	\end{enumerate}
	\end{lemma}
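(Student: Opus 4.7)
The first assertion $\overline{\Omega \cap H}\cap \overline{A_H}\subseteq \partial \Omega$ should follow from a very soft argument: since $\Omega$ is open, its complement $\Omega^\mathsf{c}$ is closed, and $A_H\subseteq \Omega^\mathsf{c}$ directly from its definition, so $\overline{A_H}\subseteq \Omega^\mathsf{c}$. On the other hand $\overline{\Omega \cap H}\subseteq \overline{\Omega}$. Intersecting gives $\overline{\Omega}\cap \Omega^\mathsf{c}=\partial \Omega$. Since by hypothesis~\ref{hypothesis} we have $\Omega_{\rm in}\subset\subset \Omega_{\rm out}$, the boundary splits as the disjoint union $\partial \Omega=\partial \Omega_{\rm out}\sqcup \partial \Omega_{\rm in}$, so for the refinements (i) and (ii) it will suffice to show that $\overline{A_H}$ avoids the unwanted piece of $\partial \Omega$.

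For (i), I would argue that $\overline{A_H}\subseteq \Omega_{\rm out}^\mathsf{c}$ so that $\overline{A_H}\cap \partial \Omega_{\rm in}=\emptyset$, since $\partial \Omega_{\rm in}\subset \Omega_{\rm out}$ by $\Omega_{\rm in}\subset\subset \Omega_{\rm out}$. To see the inclusion, take $y\in A_H$: by definition $y\notin \Omega$, $\sigma_H(y)\in \Omega$, and $y\in H$. The condition $\sigma_H(y)\in \Omega_{\rm out}\setminus \overline{\Omega_{\rm in}}$ together with the symmetry $\sigma_H(\Omega_{\rm in})=\Omega_{\rm in}$ (which immediately gives $\sigma_H(\overline{\Omega_{\rm in}})=\overline{\Omega_{\rm in}}$) forces $y\notin \overline{\Omega_{\rm in}}$. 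Combined with $y\notin \Omega$, this gives $y\notin \Omega_{\rm out}$. Hence $A_H\subseteq \Omega_{\rm out}^\mathsf{c}$, and the conclusion follows by taking closure.

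For (ii), I would run the mirror-image argument to show $\overline{A_H}\subseteq \overline{\Omega_{\rm in}}$, so $\overline{A_H}\cap \partial \Omega_{\rm out}=\emptyset$ (using $\overline{\Omega_{\rm in}}\subset \Omega_{\rm out}$, which is disjoint from $\partial \Omega_{\rm out}$). Here one uses $\sigma_H(\Omega_{\rm out})=\Omega_{\rm out}$: for $y\in A_H$, $\sigma_H(y)\in \Omega\subseteq \Omega_{\rm out}$ forces $y\in \Omega_{\rm out}$, and then $y\notin \Omega=\Omega_{\rm out}\setminus \overline{\Omega_{\rm in}}$ forces $y\in \overline{\Omega_{\rm in}}$. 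Closing up and intersecting with $\partial \Omega$ yields the desired inclusion in $\partial \Omega_{\rm in}$.

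I do not expect any serious obstacle: the proof is essentially a bookkeeping exercise using the splitting $\partial \Omega=\partial \Omega_{\rm out}\sqcup \partial \Omega_{\rm in}$ (which itself needs the compact containment $\Omega_{\rm in}\subset\subset \Omega_{\rm out}$ to guarantee disjointness) and the closedness of $\Omega_{\rm out}^\mathsf{c}$ and $\overline{\Omega_{\rm in}}$. The only subtle point is making sure the membership $y\in A_H\Rightarrow \sigma_H(y)\in \Omega$ is used with the correct symmetry hypothesis in each case so that one can transfer information between $y$ and $\sigma_H(y)$; keeping the admissibility $H\in \mathscr{H}_{\rm ad}$ in mind (which ensures $\sigma_H(\Omega_{\rm in})\subset\subset \Omega_{\rm out}$ so all manipulations stay within the ambient open set) is the one thing worth checking explicitly.
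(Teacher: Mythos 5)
Your proof is correct, and for the first assertion it is actually more elementary than the paper's. You observe directly that $A_H\subseteq \Omega^{\mathsf c}$ (which is closed, so $\overline{A_H}\subseteq \Omega^{\mathsf c}$) and $\overline{\Omega\cap H}\subseteq \overline{\Omega}$, and intersect to land in $\overline{\Omega}\cap \Omega^{\mathsf c}=\partial\Omega$. The paper instead invokes Proposition~\ref{propo:pol-equiv1} and Proposition~\ref{propo-PHdomain} to decompose the connected open set $P_H(\Omega)\cap H = (\Omega\cap H)\sqcup A_H$ and argues via connectedness that $\overline{\Omega\cap H}\cap\overline{A_H}=\partial(\Omega\cap H)\cap \partial A_H$ is \emph{nonempty} whenever $A_H\ne\emptyset$, only then deducing the containment in $\partial\Omega$. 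The connectedness machinery is not needed for the stated containment; what the paper's longer argument buys is the non-emptiness of $\overline{\Omega\cap H}\cap\overline{A_H}$, which is not part of the lemma's statement but is implicitly used later in the proof of Lemma~\ref{lem:3.3} (where the authors write ``by Lemma~\ref{lemma~3.1} we get $\emptyset\ne\overline{\Omega\cap H}\cap\overline{A_H}\subseteq\Gamma_D$''). So your argument proves the lemma as written, but if you were to use it to replace the paper's proof you would want to add the non-emptiness claim (e.g.\ via the same connectedness argument) so that Lemma~\ref{lem:3.3} can still cite it. For parts (i) and (ii) your pointwise derivations of $A_H\subseteq\Omega_{\rm out}^{\mathsf c}$ and $A_H\subseteq\overline{\Omega_{\rm in}}$ respectively are the same content as the paper's set-algebra computations, just written element-wise, and correctly exploit the relevant reflection symmetry together with $\overline{\Omega_{\rm in}}\subset\Omega_{\rm out}$ to exclude the unwanted boundary component.
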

	\begin{proof}
	If $A_H =\emptyset $, then trivially $\emptyset = \overline{\Omega \cap H} \cap \overline{A_H} \subset \partial \Omega $. Let $A_H \neq \emptyset ,$ then from Proposition~\ref{propo:pol-equiv1}, we obtain $P_H(\Omega ) \neq \Omega $, and $A_H$ has non-empty interior. Since $\Omega \cap H$ and $A_H$ are disjoint and $P_H(\Omega )\cap H = \left(\Omega \cap H\right)\cup A_H$, using the connectedness of $P_H(\Omega )\cap H$ we conclude that $\overline{\Omega \cap H} \cap \overline{A_H} = \partial (\Omega \cap H) \cap \partial A_H \neq \emptyset $. Clearly $\Omega \cap \partial (\Omega \cap H) \cap \partial A_H = \emptyset $ and hence $\partial (\Omega \cap H) \cap \partial A_H \subseteq \partial \Omega .$
	
	\noindent
	(i) If $\sigma _H (\Omega _{\rm in}) = \Omega _{\rm in}$, then we can write 
	\begin{align*}
		A_H= \Omega ^\mathsf{c} \cap \sigma _H(\Omega )\cap H &= \left( \Omega _{\rm out}^\mathsf{c} \cup \Omega _{\rm in} \right)\cap \sigma _H \left(\Omega _{\rm out} \right) \cap \Omega _{\rm in}^\mathsf{c} \cap H \\
		& = \Omega _{\rm out}^\mathsf{c} \cap \sigma _H \left(\Omega _{\rm out} \right) \cap H.
	\end{align*}
	Since $\Omega _{\rm in} \subset \subset \Omega _{\rm out}$, we have $\partial \Omega = \partial \Omega_{\rm out} \sqcup \partial \Omega_{\rm in}$ and $\partial \Omega _{\rm in}\cap \partial A_H =\emptyset $. Therefore, $\overline{\Omega \cap H} \cap \overline{A_H} \subset \partial \Omega _{\rm out}.$
	
	\noindent
	(ii) Similarly, for $\sigma _H (\Omega _{\rm out})=\Omega _{\rm out}$ we have $A_H=\Omega _{\rm in} \cap \sigma _H (\Omega _{\rm in})\cap H$. Therefore, we obtain $\overline{\Omega \cap H} \cap \overline{A_H} \subset \partial \Omega _{\rm in}.$
	\end{proof}
	For any non-negative function $u\in \mathcal{C}(\overline{\Omega }),$ let $\widetilde{u}$ be its zero extension to $\mathbb{R}^d$ and let 
	\begin{align*}
	M_u=\Big\{x\in P_H(\Omega )\cap H : P_H(\widetilde{u})(x)>\widetilde{u}(x) \Big\}.
	\end{align*}
	Next, we prove a lemma that plays a significant role in our results.
	\begin{lemma}\label{lem:3.3}
	Let $\Omega =\Omega _{\rm out}\setminus \overline{\Omega _{\rm in}} \subset \mathbb{R}^d$ be a domain as given in~\ref{hypothesis}, $\Gamma _{\scriptscriptstyle D}\subseteq \partial \Omega $, and $H\in \mathscr{H}_{\rm ad}$. Let $u\in \mathcal{C}(\overline{\Omega})$ be a non-negative function with $u=0$ on $\Gamma _{\scriptscriptstyle D}$. If $\Gamma _{\scriptscriptstyle D}$ satisfies one of the following assumptions:
	\begin{align*}
		(a)\;\Gamma _{\scriptscriptstyle D}=\partial \Omega,\quad (b)\; 
		\Gamma _{\scriptscriptstyle D}=\partial \Omega _{\rm out}\mbox{ and } \sigma_H(\Omega _{\rm in})=\Omega _{\rm in}, \quad (c)\; \Gamma _{\scriptscriptstyle D}=\partial \Omega _{\rm in}\mbox{ and } \sigma_H(\Omega _{\rm out})=\Omega _{\rm out},
	\end{align*}
	then $\widetilde{u}$ is continuous on $P_H(\Omega ) \cap H$. Moreover, if $\Omega \neq P_H(\Omega ) \neq \sigma_H(\Omega)$ then there exists a ball $B_0 \subset \Omega \cap H$ such that
	\begin{align*}
		P_H(u)>u \mbox{ in } B_0 \cap M_u \mbox{ and } P_H(u)\equiv u \mbox{ in } B_0 \cap M_u^\mathsf{c}.
	\end{align*}
	\end{lemma}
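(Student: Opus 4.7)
The lemma has two parts: continuity of the zero extension $\tilde u$ on $P_H(\Omega)\cap H$, and the existence of a ball $B_0\subset\Omega\cap H$ on which the dichotomy between $M_u$ and $M_u^\mathsf{c}$ is realized nontrivially.

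For the continuity, I would decompose $P_H(\Omega)\cap H=(\Omega\cap H)\cup A_H$. On $\Omega\cap H$, $\tilde u$ agrees with the continuous function $u$, and on $\mathrm{int}(A_H)$ it is identically zero. The only places where continuity could fail are the interface points in $\overline{\Omega\cap H}\cap\overline{A_H}$. By Lemma~\ref{lemma~3.1}, this interface is contained in $\partial\Omega$ in case~(a), in $\partial\Omega_{\rm out}$ in case~(b), and in $\partial\Omega_{\rm in}$ in case~(c); in each case the interface lies inside $\Gamma_{\scriptscriptstyle D}$, so the boundary condition $u=0$ on $\Gamma_{\scriptscriptstyle D}$ forces $u$ to vanish along the interface and the zero value on $A_H$ then matches continuously across.

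For the ball, the hypothesis $\Omega\neq P_H(\Omega)\neq\sigma_H(\Omega)$ combined with Proposition~\ref{propo:pol-equiv1} makes both $A_H$ and $B_H$ open and nonempty. I would pick a boundary point $x_0\in\partial B_H\cap(\Omega\cap H)$; its reflection $\sigma_H(x_0)$ then lies on $\partial\Omega$. Take $B_0$ to be a small ball around $x_0$ contained in $\Omega\cap H$. By the choice of $x_0$, the ball $B_0$ meets $B_H$ in an open set, and on that set $\tilde u\circ\sigma_H\equiv 0$ forces $P_H(u)\equiv u$, giving the $M_u^\mathsf{c}$ piece of the conclusion. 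On the remainder of $B_0$, lying in $\Omega\cap\sigma_H(\Omega)\cap H$, I would use the continuity of $\tilde u$ just established together with the strict positivity of $u$ in the interior of $\Omega$ (and its vanishing on $\Gamma_{\scriptscriptstyle D}$) to produce a point at which $u\circ\sigma_H>u$, that is, a point of $M_u$.

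The delicate step is this last sign comparison. The natural obstruction is that $u(\sigma_H(x))$ is small whenever $\sigma_H(x)$ is close to $\partial\Omega$, which is precisely the regime at the interface under consideration. Overcoming this likely requires a careful choice of $x_0$---for instance on the common boundary of $B_H$ and $\sigma_H(A_H)$---or a perturbation argument exploiting the nonempty interior of $\sigma_H(A_H)\subset\Omega\cap\overline{H}^{\mathsf{c}}$, where $u>0$ but $\tilde u\circ\sigma_H\equiv 0$, and transferring this strict inequality to a neighborhood inside $\Omega\cap H$ via continuity across $\partial H$. Handling this geometric/analytic step is where the real content of the lemma lies.
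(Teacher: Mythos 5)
Your continuity argument is correct and matches the paper's: decompose $P_H(\Omega)\cap H=(\Omega\cap H)\cup A_H$, use Lemma~\ref{lemma~3.1} to place the interface $\overline{\Omega\cap H}\cap\overline{A_H}$ inside $\Gamma_{\scriptscriptstyle D}$, and observe that $u$ vanishes there so the two continuous pieces glue.

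For the ball there is a genuine gap, which you flag yourself. Picking $x_0\in\partial B_H\cap(\Omega\cap H)$ guarantees that a small ball $B_0$ around $x_0$ meets $M_u^\mathsf{c}$ (it meets the interior of $B_H$, where $\widetilde{u}\circ\sigma_H=0$ and hence $P_H(\widetilde{u})=\widetilde{u}$), but nothing forces $B_0$ to meet $M_u$. Just outside $B_H$, in $\Omega\cap\sigma_H(\Omega)\cap H$, the comparison $P_H(u)>u$ reduces to $u\circ\sigma_H>u$, and this may simply fail on a whole neighbourhood of $\partial B_H\cap(\Omega\cap H)$; in that case your $B_0$ lies wholly in $M_u^\mathsf{c}$ and the strong comparison step downstream yields no contradiction. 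Your suggested remedy, transferring the strict inequality on $\sigma_H(A_H)\subset\Omega\cap\overline{H}^\mathsf{c}$ ``across $\partial H$,'' cannot work either: the reflection of $\sigma_H(A_H)$ is $A_H\subset\Omega^\mathsf{c}$, which a ball $B_0\subset\Omega\cap H$ cannot touch.

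The paper closes this gap with a topological argument that you never invoke. Set $N=(P_H(\Omega)\cap H)\setminus M_u$. Because $A_H\subseteq M_u$ and $B_H\subseteq N$, the set $M_u$ is relatively open and non-empty, $N$ is relatively closed and non-empty, and moreover $N\subseteq\Omega\cap H$. The crucial ingredient is Proposition~\ref{propo-PHdomain}: $P_H(\Omega)\cap H$ is \emph{connected}. Connectedness forbids $N$ from also being open, so $\partial M_u\cap N\neq\emptyset$. Any $x_0$ in this intersection lies in the open set $\Omega\cap H$, so one may take $B_0=B_r(x_0)\subset\Omega\cap H$, and then $B_0$ meets $M_u$ (since $x_0\in\partial M_u$) and $M_u^\mathsf{c}$ (since $x_0\in N$). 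That connectedness step, rather than a clever choice of $x_0$ on $\partial B_H$, is the idea your proposal is missing.
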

	\begin{proof}
	If $P_H(\Omega )=\Omega $ then $\widetilde{u}=u$ in $P_H(\Omega ) \cap H$, and hence it is continuous. If $P_H(\Omega ) \neq \Omega $, then from Proposition~\ref{propo:pol-equiv1} we have $A_H \neq \emptyset $ and $\Omega \cap H \subsetneq P_H(\Omega )\cap H$. Clearly $\widetilde{u}=u$ on $\Omega\cap H$ and $\widetilde{u}=0$ on $A_H$, and hence $\widetilde{u}$ is continuous on both $\Omega \cap H$ and $A_H$. If $\Gamma _{\scriptscriptstyle D}$ satisfies one of the assumptions $(a)$-$(c)$, then by Lemma~\ref{lemma~3.1} we get $\emptyset \neq \overline{\Omega \cap H} \cap \overline{A_H}\subseteq \Gamma _{\scriptscriptstyle D}$. Therefore, $\widetilde{u}=u=0$ on $\overline{\Omega \cap H} \cap \overline{A_H}$ and hence $\widetilde{u}$ is continuous on $\left(\Omega \cap H \right) \cup A_H = P_H (\Omega)\cap H$. 
	
	\noindent
	Now assume that $\Omega \neq P_H(\Omega )\neq \sigma_H(\Omega )$. Then from Proposition~\ref{propo:pol-equiv1}, both $A_H$ and $B_H$ have non-empty interiors. By the definition of $P_H (\widetilde{u})$, we get $P_H(\widetilde{u})\geq \widetilde{u} \text{ in  } P_H(\Omega)\cap H$, and 
	\begin{align}\label{inequality}
		\begin{aligned}
			\text{ in  } A_H &: \widetilde{u}=0, \widetilde{u}\circ \sigma _H = u \circ \sigma _H > 0 \mbox{ and hence } P_H(\widetilde{u}) = u \circ \sigma _H >\widetilde{u};\\
			\mbox{ in } B_H &:\widetilde{u}=u>0,\widetilde{u}\circ \sigma _H =0 \mbox{ and hence } P_H(\widetilde{u})=\widetilde{u}.
		\end{aligned}
	\end{align}
	Let $N=\big\{x\in P_H(\Omega )\cap H: P_H(\widetilde{u})(x)=\widetilde{u}(x)  \big\}$.  Since $P_H(\widetilde{u})$ is  also continuous  on $P_H(\Omega ) \cap H$ (Proposition~\ref{propo:pol_Lip}), from \eqref{inequality} we get  $N \subsetneq \Omega \cap H$ is a non-empty closed set and $M_u = (P_H(\Omega ) \cap H) \setminus N$ is a non-empty open set in $P_H(\Omega )\cap H$. Now, by the connectedness of $P_H(\Omega )\cap H$ we must have $\partial M_u \cap N \neq \emptyset$. For $x_0\in \partial M_u \cap N$, let  $B _0 = B_r(x_0) \subset \Omega \cap H$. Then  $B_0$ has all the the desired properties.    	
	\end{proof}
	Now, we prove Theorem~\ref{thm~1.2}.
	\begin{proof}[Proof of Theorem~\ref{thm~1.2}]
	Let $1<p<\infty $, $\Omega _{\rm out}\setminus \overline{\Omega _{\rm in}} \subset \mathbb{R}^d$ be as given in~\ref{hypothesis} and $H\in \mathscr{H}_{\rm ad}$. Denote $\Omega =\Omega _{\rm out}\setminus \overline{\Omega _{\rm in}}$.
	
	\noindent
	(i) Assume that $\sigma_H(\Omega _{\rm in})=\Omega _{\rm in}$. Let $0<u \in \mathcal{C}^{0,\alpha }_{\scriptscriptstyle \partial P_H(\Omega _{\rm out})} (\overline{\Omega })$ be an eigenfunction corresponding to $\nu _1(\Omega )$. Define $v=P_H(u)$ in $P_H(\Omega )$ then, from~Proposition~\ref{propo:pol_Lip1}, we get $v\in \mathcal{C}^{0,\alpha } _{\scriptscriptstyle \partial P_H(\Omega _{\rm out})}(P_H (\Omega ))$, and
	\begin{align*}
		\left\| u \right\| _{p,\Omega } = \left\| v \right\| _{p,P_H(\Omega )} \mbox{ and } \left\| \nabla u \right\| _{p,\Omega } = \left\| \nabla v \right\| _{p,P_H(\Omega )}.
	\end{align*}
	From the variational characterization of $\nu _1 (P_H(\Omega ))$, we obtain:
	\begin{align}\label{eqn:pol_eigen1}
		\nu _1(P_H(\Omega ))\leq \nu _1(\Omega).
	\end{align}
	\noindent
	(ii) Assume that $\sigma_H(\Omega _{\rm out})=\Omega _{\rm out}$. Let $0<u \in \mathcal{C}^{0,\alpha }_{\scriptscriptstyle \partial P^H(\Omega _{\rm in})} (\overline{\Omega })$ be an eigenfunction corresponding to $\tau _1(\Omega )$. Define $v=P_H(u)$ in $P_H(\Omega )$ then, from~Proposition~\ref{propo:pol_Lip1}, we obtain $v\in \mathcal{C}^{0,\alpha } _{\scriptscriptstyle \partial P^H(\Omega_{\rm in})}(P_H (\Omega ))$, and  
	\begin{align*}
		\left\| u \right\| _{p,\Omega } = \left\| v \right\| _{p,P_H(\Omega )} \mbox{ and } \left\| \nabla u \right\| _{p,\Omega } = \left\| \nabla v \right\| _{p,P_H(\Omega )}.
	\end{align*}
	From the variational characterization of $\tau _1 (P_H(\Omega ))$, we get
	\begin{align}\label{eqn:pol_eigen11}
		\tau _1(P_H(\Omega ))\leq \tau _1(\Omega).
	\end{align}
	
	\noindent 
	(iii) Let $\frac{2d+2}{d+2}<p<\infty.$ Assume that, the equality holds in~\eqref{eqn:pol_eigen1}. Let $0\leq u \in \mathcal{C}^{0,\alpha } _{\scriptscriptstyle \partial P^H(\Omega _{\rm out})} (\overline{\Omega })$ be an eigenfunction corresponding to the eigenvalue $\nu _1(\Omega )$. On the contrary, assume that $\Omega \neq P_H(\Omega )\neq \sigma _{\scriptscriptstyle H}(\Omega )$. Then by Lemma~\ref{lem:3.3}, there exists a ball $B_0 \subset \Omega \cap H$ such that 
	\begin{align}\label{ineq2}
		v>u \mbox{ in } B_0 \cap M_u \mbox{ and } v\equiv u \mbox{ in } B_0 \cap M_u^\mathsf{c},
	\end{align}
	where $M_u=\big\{x\in P_H(\Omega )\cap H : v(x)> u(x)\big\}$ is a non-empty open set. Then, both $u,v\in \mathcal{C}^1(\overline{B_0})$ are positive distributional solutions for the following problem in $B_0$:
	\begin{align*}
		-\Delta _p u - \lambda |u|^{p-2}u=0\mbox{ in } B_0.
	\end{align*}
	Now, for $\frac{2d+2}{d+2}<p<\infty $, the strong comparison principle (Proposition~\ref{SCP-p}) implies that 
	\begin{align*}
		\mbox{ either } u<v \mbox{ or } u\equiv v \mbox{ in } B_0.
	\end{align*}
	This is a contradiction to \eqref{ineq2}, and hence we must have
	$P_H(\Omega )=\Omega \mbox{ or } P_H(\Omega )=\sigma _{\scriptscriptstyle H}(\Omega ).$
	If the equality holds in \eqref{eqn:pol_eigen11}, the proof will follow using a similar set of arguments as given above.
	\end{proof}
    \section{Strict monotonicity of first eigenvalues via polarization}\label{MonoEigenvalue}
    In this section, we prove Theorem~\ref{thm-mono} and Theorem~\ref{thm-circ}. The main idea is to express the translations and the rotations of the obstacle $\mathscr{O}$ in terms of polarizations and apply the Faber-Krahn type inequality to get the desired monotonicity. 
    \subsection{Monotonicity along a straight line} 
    Now, we give a proof for Theorem~\ref{thm-mono}. First, we recall that:
    \begin{align*}
	\mbox{for given } h\in\mathbb{S}^{d-1}, \quad H_s \mathrel{\mathop:}= \big\{x\in \mathbb{R}^d : x\cdot h<s\}, \quad \Sigma _s \mathrel{\mathop:}= \left\{ x\in \Omega : x\cdot h\geq s \right\}, \mbox{for $s \in \mathbb{R}$,} 
	\end{align*}
	$P_{H_0}(\Omega )=\Omega $, the obstacle $\mathscr{O}$ is Steiner symmetric with respect to $\partial H_0$, and the translations of $\mathscr{O}$ in the $h$-direction are given by $\mathscr{O}_s = s h +\mathscr{O}$ for $s \in \mathbb{R},$ and \begin{align*}
	    \mathrm{L}_\mathscr{O}\mathrel{\mathop:}= \Big\{s \in \mathbb{R} : P_{\scriptscriptstyle H_s} (\Omega ) = \Omega \mbox{ and } \mathscr{O}_s \subset \Omega \Big\}.
	\end{align*}
	We observe the following facts:
	\begin{align}
	    \mbox{ for } x\in \mathbb{R}^d, \quad \sigma _{H_0}(x)=x-2(x\cdot h)h, \mbox{ and } \sigma _{H_s}(x)=2sh+\sigma _{H_0}(x) \mbox{ for } s\in \mathbb{R}.
	\end{align}
	\begin{lemma}\label{lemma-4.1}
	If $\mathscr{O}\subseteq \mathbb{R}^d$ satisfies $\sigma _{\scriptscriptstyle H_0}(\mathscr{O})=\mathscr{O}$, then  for any $s, t\in \mathbb{R}$, $th+ \mathscr{O}_s=\mathscr{O}_{s+t}$ and $\sigma _{H_t}(\mathscr{O}_s)=\mathscr{O}_{2t-s}$.
	\end{lemma}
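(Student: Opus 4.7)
The plan is to verify both identities by direct computation using the explicit formulas for $\sigma_{H_0}$ and $\sigma_{H_s}$ recorded in the paragraph preceding the lemma, together with the symmetry hypothesis $\sigma_{H_0}(\mathscr{O})=\mathscr{O}$.

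For the first identity, I would simply unfold the definition: since $\mathscr{O}_s = sh+\mathscr{O}$, we have $th+\mathscr{O}_s = th+sh+\mathscr{O} = (s+t)h+\mathscr{O} = \mathscr{O}_{s+t}$. This needs no hypothesis on $\mathscr{O}$.

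For the second identity, I would fix $y\in \mathscr{O}_s$, write $y=sh+z$ with $z\in \mathscr{O}$, and compute
\begin{align*}
\sigma_{H_t}(y) &= 2th+\sigma_{H_0}(sh+z) = 2th+(sh+z)-2\bigl((sh+z)\cdot h\bigr)h\\
&= 2th+sh+z-2sh-2(z\cdot h)h = (2t-s)h+\sigma_{H_0}(z),
\end{align*}
using $h\cdot h=1$. Now the symmetry assumption $\sigma_{H_0}(\mathscr{O})=\mathscr{O}$ enters: $\sigma_{H_0}(z)\in \mathscr{O}$, so $\sigma_{H_t}(y)\in (2t-s)h+\mathscr{O}=\mathscr{O}_{2t-s}$. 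Hence $\sigma_{H_t}(\mathscr{O}_s)\subseteq \mathscr{O}_{2t-s}$. The reverse inclusion follows by applying the same argument with $s$ replaced by $2t-s$ (since $\sigma_{H_t}$ is an involution and $2t-(2t-s)=s$), or equivalently by noting the computation above is an equality of maps between translated copies of $\mathscr{O}$.

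There is no real obstacle here; this is a bookkeeping lemma whose role is to package two affine identities so that later arguments about translating the obstacle can be rephrased as polarizations with respect to the family $\{H_s\}$. The only thing to be careful about is distinguishing the reflection $\sigma_{H_0}$, which fixes $\partial H_0$, from $\sigma_{H_t}$, which reflects about the parallel hyperplane $\partial H_t$; the formula $\sigma_{H_s}(x)=2sh+\sigma_{H_0}(x)$ recorded just above the lemma makes this transparent.
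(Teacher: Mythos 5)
Your proof is correct and takes essentially the same approach as the paper: both verify the identities directly from the explicit reflection formulas, using $\sigma_{H_0}(\mathscr{O})=\mathscr{O}$ at exactly the same point. The paper organizes the computation slightly differently (it first notes $\sigma_{H_s}(\mathscr{O}_s)=\mathscr{O}_s$ and then uses $\sigma_{H_t}=2(t-s)h+\sigma_{H_s}$ as a set-level identity, avoiding the separate two-inclusion step), but the content is the same.
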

	\begin{proof}
	It is easy to verify that $t h+ \mathscr{O}_s=(s+t)h+\mathscr{O}=\mathscr{O}_{s+t}$ and $\sigma _{H_s}(\mathscr{O}_s)=\mathscr{O}_s$. Since $\sigma _{H_t}(x)=2t h+ \sigma _{H_0}(x)=2(t-s)h+2sh+ \sigma _{H_0}(x) = 2(t-s)h + \sigma _{H_s}(x)$ any $x\in \mathbb{R}^d$, we get $\sigma _{H_t}(\mathscr{O}_s)=2(t-s) h + \sigma_{H_s}(\mathscr{O}_s)=2(t-s) h + \mathscr{O}_s = \mathscr{O}_{2t-s}$.
	\end{proof}
	\begin{proof}[Proof of Theorem~\ref{thm-mono}]
	Let the set $\Sigma _{s_0} \bigcup \sigma _{H_{s_0}} \left(\Sigma _{s_0} \right)$ is convex in the $h$-direction for some $s_0 \in \mathrm{L}_\mathscr{O}$. Let $\mathrm{R}_\mathscr{O}:=\sup \mathrm{L}_\mathscr{O}$.
	
	\medskip
	\noindent
	\underline{\textbf{The interval $[s_0, \mathrm{R}_\mathscr{O}) \subseteq \mathrm{L}_\mathscr{O}$:}} Let $s\in (s_0, \mathrm{R}_\mathscr{O})$. Clearly, $\Sigma _s \subset \Sigma _{s_0}$ and the convexity of the set $\Sigma _{s_0} \bigcup \sigma _{\scriptscriptstyle H_{s_0}}(\Sigma_{s_0})$ in the $h$-direction implies that $\sigma _{\scriptscriptstyle H_s} (\Sigma _s)\subset \Sigma _{s_0}\bigcup \sigma _{\scriptscriptstyle H_{s_0}} (\Sigma _{s_0}) \subset \Omega $ (see Proposition~\ref{propo:char}-(i)). Therefore, by  Proposition~\ref{propo:pol-equiv}, we get $P_{\scriptscriptstyle H_s}(\Omega ) = \Omega $. Next, we show $\mathscr{O}_s\subset \Omega $.  By the definition of $\mathrm{R}_\mathscr{O}$, there exists $s_1 \in \mathrm{L}_\mathscr{O}$ such that $s< s_1 <\mathrm{R}_\mathscr{O}$.  Observe that $\mathscr{O}_{s_0}, \mathscr{O}_{s_1}\subset \Sigma _{s_0} \bigcup \sigma _{\scriptscriptstyle H_{s_0}} (\Sigma _{s_0})$, and  $s=t s_0+(1-t) s_1$, for some $t\in (0,1).$ Thus $\mathscr{O}_s = t \mathscr{O}_{s_0}+(1-t)\mathscr{O}_{s_1}$, and hence the convexity of $\Sigma _{s_0} \bigcup \sigma _{\scriptscriptstyle H_{s_0}} (\Sigma _{s_0})$ in the $h$-direction implies that $\mathscr{O}_s \subset \Omega $. Therefore $s\in \mathrm{L}_\mathscr{O}$.
	
	\medskip
	\noindent
	\underline{\textbf{Monotonicity of $\lambda _1(\cdot)$ on $[s_0, \mathrm{R}_\mathscr{O})$:}}
	Let $s<t$ in $[s_0, \mathrm{R}_\mathscr{O})$. Then $\overline{s}= \frac{s+t}{2}\in\mathrm{L}_\mathscr{O}$ and hence $P_{\scriptscriptstyle H_{\overline{s}}}(\Omega )=\Omega $. Since $\mathscr{O}_{s}$ is Steiner symmetric with respect to $\partial H_{s}$ and $\overline{s}>s$, from Proposition~\ref{propo:char}-(i), we get $P^{\scriptscriptstyle H_{\overline{s}}} (\mathscr{O}_{s}) = \sigma _{\scriptscriptstyle H_{\overline{s}}} (\mathscr{O}_{s})$. From Lemma~\ref{lemma-4.1} (since $\sigma _{H_0}(\mathscr{O})=\mathscr{O}$) we also have $\sigma _{\scriptscriptstyle H_{\overline{s}}} (\mathscr{O}_{s}) = \mathscr{O}_{2\overline{s}-s} = \mathscr{O}_{t}$. Therefore, from Proposition~\ref{propo:pol-compliment1} we obtain  
	\begin{align*}
		P_{\scriptscriptstyle H_{\overline{s}}}(\Omega \setminus \mathscr{O}_{s}) = P_{\scriptscriptstyle H_{\overline{s}}}(\Omega ) \setminus P^{\scriptscriptstyle H_{\overline{s}}}(\mathscr{O}_{s}) = \Omega \setminus \mathscr{O}_{t}.
	\end{align*}
	For $\frac{2d+2}{d+2}<p<\infty $, the Faber-Krahn type inequality (Theorem~\ref{thm~1.2}) implies that $\lambda_{1}(t) < \lambda_{1}(s)$. Therefore, the first Dirichlet eigenvalue $\lambda _1(\cdot )$ is strictly decreasing on $[s_0, \mathrm{R}_\mathscr{O})$. 
	\end{proof}
	\begin{remark}
	If we drop the convexity assumption from Theorem~\ref{thm-mono}, then $L_0$ might not be an interval. However, for any $s,t \in L_0$ with $\frac{s+t}{2} \in L_0$ and $s<t$, the above proof still yields $\lambda _1(t)<\lambda _1(s).$
	\end{remark}
	\subsection{Monotonicity with respect to the rotations about a point}
	Now, we prove Theorem~\ref{thm-circ}. First recall that, for $\xi \in \mathbb{S}^{d-1}\setminus \{\eta \}$ the rotations of the obstacle $\mathscr{O}$ with the  plane of rotation is $X_{\xi }=\mbox{span}\left\{\eta ,\xi \right\}$ about the point $a\in \mathbb{R}^{d}$ are given by: for $s \in [-1,1],$
	\begin{align*}
	\mathscr{O}_{s,\xi }:= a+R_{s,\xi }(-a+\mathscr{O}),
	\end{align*}
	where $R_{s,\xi }$ is the simple rotation in $\mathbb{R}^d$ with  $X_\xi $ as the plane of rotation and $\theta _s =\arccos (s)\in [0,\pi ]$ as the angle of rotation from the ray $\mathbb{R}^+\eta $ in the counter-clockwise direction.	We prove the following lemmas. 
	\begin{lemma}\label{lemma:fssObs}
	For any distinct $\xi _1, \xi _2 \in \mathbb{S}^{d-1}\setminus \{\eta \},$
	there exists a simple rotation $R$  such that
	\begin{align*}
	   R(-a+\Omega\setminus \mathscr{O}_{s,\xi _1})&=-a+\Omega \setminus \mathscr{O}_{s,\xi_2}.
	\end{align*}
	\end{lemma}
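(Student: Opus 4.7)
The strategy is to construct $R$ as a simple rotation that fixes $\eta$. By Proposition~\ref{fssObs}(iii) applied to $\Omega$, any such $R$ automatically satisfies $R(-a+\Omega)=-a+\Omega$, so the only real task is to arrange that $R$ carries $-a+\mathscr{O}_{s,\xi_1}$ onto $-a+\mathscr{O}_{s,\xi_2}$.

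To build $R$, decompose $\xi_i=(\xi_i\cdot \eta)\eta+\beta_i\zeta_i$ with $\zeta_i\in\eta^\perp\cap\mathbb{S}^{d-1}$ and $\beta_i>0$ (the only case in which $X_{\xi_i}$ is genuinely $2$-dimensional), so that $X_{\xi_i}=\mathrm{span}\{\eta,\zeta_i\}$ and $R_{s,\xi_i}(\eta)=\cos\theta_s\,\eta+\sin\theta_s\,\zeta_i$. I would pick a $2$-plane $W\subset \eta^\perp$ containing $\zeta_1$ and $\zeta_2$ (or, if $\zeta_2=-\zeta_1$, a $2$-plane in $\eta^\perp$ spanned by $\zeta_1$ and any other unit vector of $\eta^\perp$ orthogonal to $\zeta_1$) and let $R$ be the rotation of $W$ sending $\zeta_1\mapsto\zeta_2$, extended by the identity on $W^\perp$. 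By construction $R$ is a simple rotation, fixes $\eta$, and satisfies $R(R_{s,\xi_1}(\eta))=R_{s,\xi_2}(\eta)$.

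With $R$ in hand, I would verify $R(-a+\mathscr{O}_{s,\xi_1})=-a+\mathscr{O}_{s,\xi_2}$ by working sphere by sphere. By Proposition~\ref{fssObs}(ii), each $-a+\mathscr{O}_{s,\xi_i}=R_{s,\xi_i}(-a+\mathscr{O})$ is foliated Schwarz symmetric with respect to $\mathbb{R}^+R_{s,\xi_i}(\eta)$, and since $R_{s,\xi_i}$ is an orthogonal transformation it preserves each sphere $\partial B_r(0)$; consequently, $(-a+\mathscr{O}_{s,\xi_i})\cap \partial B_r(0)$ is the spherical cap around $rR_{s,\xi_i}(\eta)$ of the same geodesic radius $\rho(r)$ that $(-a+\mathscr{O})\cap\partial B_r(0)$ has around $r\eta$. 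Because $R$ is also an isometry preserving each $\partial B_r(0)$ and maps $R_{s,\xi_1}(\eta)$ to $R_{s,\xi_2}(\eta)$, it sends the cap around $rR_{s,\xi_1}(\eta)$ to the cap of the same radius around $rR_{s,\xi_2}(\eta)$; taking the union over $r$ yields the desired identity, and combining with $R(-a+\Omega)=-a+\Omega$ completes the argument.

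The main delicacy is the antipodal case $\zeta_2=-\zeta_1$: carrying out a $\pi$-rotation inside $\eta^\perp$ requires a second independent direction there, which exists precisely when $d\geq 3$. In dimension $d=2$ the natural candidate is the reflection through $\mathbb{R}\eta$, which lies outside $SO(2)$ but still preserves $-a+\Omega$ and $-a+\mathscr{O}$ by axial symmetry, so the conclusion still goes through once one reads \emph{simple rotation} in the broader sense of an orthogonal transformation with a $(d-2)$-dimensional fixed subspace.
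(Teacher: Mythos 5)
Your proof is essentially the same as the paper's: you project $\xi_i$ to the unit vectors $\zeta_i\in\eta^\perp$ (the paper's $\widetilde{\xi_i}$), build a simple rotation $R$ fixing $\eta$ that carries $\zeta_1$ to $\zeta_2$, and then verify $R(R_{s,\xi_1}(\eta))=R_{s,\xi_2}(\eta)$ and transport the rotated obstacle cap-by-cap on spheres using the foliated Schwarz description, while $R(-a+\Omega)=-a+\Omega$ follows from Proposition~\ref{fssObs}(iii). You are, however, more careful than the paper at one point: the paper takes $X=\mathrm{span}\{\widetilde{\xi_1},\widetilde{\xi_2}\}$ as the rotation plane without noting that this span degenerates to a line when $\widetilde{\xi_2}=-\widetilde{\xi_1}$. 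Your fix of padding $\zeta_1$ with a second orthonormal direction in $\eta^\perp$ handles this for $d\geq 3$, and your observation that $d=2$ genuinely requires a reflection rather than a rotation in the antipodal case is correct and is a gap in the paper's statement as literally written (though the intended consequence, invariance of $\gamma_1$, still holds by the axial symmetry of $\Omega$ and $\mathscr{O}$). One small slip: a reflection across $\mathbb{R}\eta$ in $\mathbb{R}^2$ has a $1$-dimensional fixed subspace, i.e.\ of dimension $d-1$, not $d-2$, so the "broader reading" you propose in the last sentence does not actually cover it; the correct way to absorb $d=2$ is to allow orthogonal maps fixing $\eta$ that act as $\pm 1$ on a line in $\eta^\perp$.
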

	\begin{proof}
		Let $\xi _1, \xi _2 \in \mathbb{S}^{d-1}\setminus \{\eta \}$, define
		\begin{align}
	    \widetilde{\xi _i}=\frac{\xi _i -(\xi _i \cdot \eta ) \eta}{\left\|\xi _i-(\xi _i \cdot \eta )\eta \right\|} \mbox{ for } i=1,2.
	    \end{align}
	    Observe that, the rotation of $\eta $ under $R_{s, \xi _i}$ is given by $R_{s, \xi _i} (\eta) = s\eta+\sqrt{1-s^2}\,\widetilde{\xi _i}$ for $i=1,2$. Consider the plane $X= \mbox{span}\left\{\widetilde{\xi_1}, \widetilde{\xi_2}\right\}$, that is  orthogonal to $\eta $. Let $R$ be the simple rotation such that $R(\widetilde{\xi_1})=\widetilde{\xi_2}.$ Thus $R$ must fix $\eta$, and 
		\begin{align*}
		    R\circ R_{s,\xi _1}(\eta )= R(s\eta+\sqrt{1-s^2}\,\widetilde{\xi_1})=s\eta+\sqrt{1-s^2}\,\widetilde{\xi_2}=R_{s,\xi _2}(\eta ).
		\end{align*}
		Therefore, for $r>0$, $\rho >0$,
		\begin{align*}
		R\left(B_{\rho }(rR_{s,\xi _1}(\eta ))\cap \partial B_r(0)\right)=B_{\rho }(rR\circ R_{s,\xi _1} (\eta ))\cap \partial B_r(0)=B_{\rho }(rR_{s,\xi _2} (\eta ))\cap \partial B_r(0),
		\end{align*}
		and from~\eqref{eqn:fssSet} we obtain $R\left(R_{s,\xi _1}(-a+\mathscr{O})\right)=R_{s,\xi _2}(-a+\mathscr{O})$, and hence $R\left(-a+\mathscr{O}_{s,\xi _1}\right)=-a+\mathscr{O}_{s,\xi _2}.$ Since $R$ fixes $\eta$, and $\Omega $ is foliated Schwarz symmetric with respect to $a+\mathbb{R}^+\eta $, we get $R(-a+\Omega ) = -a+\Omega $. Thus  we obtain
		\begin{align*}
		R(-a+\Omega\setminus \mathscr{O}_{s,\xi _1})&= R(-a+\Omega ) \setminus R(-a+\mathscr{O}_{s,\xi_1})\\
		&=(-a+\Omega )\setminus (-a+\mathscr{O}_{s,\xi_2})=-a+\Omega \setminus \mathscr{O}_{s,\xi_2}.
		\end{align*}
	\end{proof}
	From Lemma~\ref{lemma:fssObs}, we only need to consider the rotations of the obstacle by $R_{s,\xi}$ about the point $a$ in a $X_\xi$-plane for a fixed $\xi  \in \mathbb{S}^{d-1}\setminus \{\eta \}.$ Thus for $s\in [-1,1],$ we set $\mathscr{O}_s=\mathscr{O}_{s,\xi}$. Recall that:
	\begin{align*}
	\mbox{for } a\in \mathbb{R}^d \mbox{ and }  \eta \in \mathbb{S}^{d-1}, \quad \mathscr{H}_{a,\eta }=\Big\{H\in \mathscr{H} : a\in \partial H \mbox{ and } a+\mathbb{R}^+ \eta \subset H \Big\}.
	\end{align*}
	\begin{lemma}\label{lemma-4.2}
	Let $a\in \mathbb{R}^d$, $\eta \in \mathbb{S}^{d-1}$, and $\mathscr{O}\subset\mathbb{R}^d$ is foliated Schwarz symmetric with respect to the ray $a+\mathbb{R}^+\eta $. Let $\xi \in \mathbb{S}^{d-1}\setminus \{\eta \}$ and the rotations of $\mathscr{O}$ be as given in~\eqref{obstacle}. Then, for any $s<t$ in $[-1,1]$ there exists $H\in \mathscr{H} _{a, \eta }$ such that 
	\begin{align*}
	    \mbox{\rm (a) } \sigma _H (\mathscr{O}_{t}) = \mathscr{O}_{s},\quad \mbox{\rm (b) } P^H(\mathscr{O}_{t}) = \mathscr{O}_{s} \quad \mbox{and } \mbox{\rm (c) } P_H(\mathscr{O}_s)=\mathscr{O}_t.
	\end{align*}
	\end{lemma}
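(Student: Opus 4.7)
The plan is to construct $H$ explicitly as a reflection in the plane $X_\xi$ that interchanges the two axes of foliated Schwarz symmetry, and then extract the three conclusions in order, with (a) being the heart of the matter and (b), (c) following formally from Proposition~\ref{propo:char}\ref{char:fss}.

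For the construction, I work in the plane $X_\xi = \mathrm{span}\{\eta,\widetilde{\xi}\}$ with $\widetilde{\xi} = (\xi-(\xi\cdot\eta)\eta)/\|\xi-(\xi\cdot\eta)\eta\|$, so that $R_{s,\xi}(\eta) = \cos(\theta_s)\eta + \sin(\theta_s)\widetilde{\xi}$ and similarly for $R_{t,\xi}(\eta)$. Set $\theta_m = (\theta_s+\theta_t)/2$ and
\[
v = \cos(\theta_m)\eta+\sin(\theta_m)\widetilde{\xi},\qquad w=-\sin(\theta_m)\eta+\cos(\theta_m)\widetilde{\xi},
\]
and define $H = \{x\in\mathbb{R}^d : (x-a)\cdot w <0\}$. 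Because $s<t$ forces $\theta_m\in(0,\pi)$, we have $\eta\cdot w = -\sin(\theta_m)<0$, so $a+\mathbb{R}^+\eta\subset H$, and $a\in\partial H$ is immediate; hence $H\in\mathscr{H}_{a,\eta}$.

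For (a), let $\sigma_S$ denote the linear part of $\sigma_H$ (so $\sigma_H(x)=a+\sigma_S(x-a)$). A direct angle computation in $X_\xi$ shows $\sigma_S(R_{t,\xi}(\eta))=R_{s,\xi}(\eta)$, and I then check that the composition $T := R_{s,\xi}^{-1}\circ\sigma_S\circ R_{t,\xi}$ fixes $\eta$ pointwise, fixes $X_\xi^\perp$ pointwise, and sends $\widetilde{\xi}\mapsto -\widetilde{\xi}$. Thus $T$ is the reflection across the hyperplane $\widetilde{\xi}^{\,\perp}$, which contains the line $\mathbb{R}\eta$. Since $-a+\mathscr{O}$ is foliated Schwarz symmetric with respect to $\mathbb{R}^+\eta$, it is axially symmetric with respect to $\mathbb{R}\eta$ (Proposition~\ref{fssObs}(i)), and so by Remark~\ref{Observations}(ii) it is invariant under every reflection across a hyperplane containing $\mathbb{R}\eta$, in particular under $T$. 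This yields $\sigma_S\circ R_{t,\xi}(-a+\mathscr{O})=R_{s,\xi}(-a+\mathscr{O})$, i.e.\ $\sigma_H(\mathscr{O}_t)=\mathscr{O}_s$.

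For (b) and (c), I use Proposition~\ref{propo:char}\ref{char:fss}. By Proposition~\ref{fssObs}(ii), $\mathscr{O}_t$ is foliated Schwarz symmetric with respect to $a+\mathbb{R}^+R_{t,\xi}(\eta)$, and a short computation gives $R_{t,\xi}(\eta)\cdot w=\sin(\theta_t-\theta_m)<0$, so this ray lies in $H$ and $H\in\mathscr{H}_{a,R_{t,\xi}(\eta)}$. Therefore $P^H(\mathscr{O}_t)=\sigma_H(\mathscr{O}_t)=\mathscr{O}_s$ by (a), proving (b). For (c), I set $H'=\sigma_H(H)=\overline{H}^{\,\mathsf{c}}$; since $\sigma_{H'}=\sigma_H$, a direct comparison of definitions (using the version $P_H(A)=[(A\cup\sigma_H(A))\cap\overline{H}]\cup[A\cap\sigma_H(A)]$ from the proof of Proposition~\ref{Propo-Obs}) shows $P_H(A)=P^{H'}(A)$ for every $A$. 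Analogously $R_{s,\xi}(\eta)\cdot w=\sin(\theta_s-\theta_m)>0$, so $a+\mathbb{R}^+R_{s,\xi}(\eta)\subset H'$ and $H'\in\mathscr{H}_{a,R_{s,\xi}(\eta)}$; applying Proposition~\ref{propo:char}\ref{char:fss} to $\mathscr{O}_s$ gives $P_H(\mathscr{O}_s)=P^{H'}(\mathscr{O}_s)=\sigma_{H'}(\mathscr{O}_s)=\sigma_H(\mathscr{O}_s)=\mathscr{O}_t$, which is (c).

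The main obstacle is the bookkeeping in step (a): verifying that the composition $T$ is exactly the reflection across a hyperplane containing $\mathbb{R}\eta$ requires carefully tracking angles in $X_\xi$ and making sure $T$ acts as identity on $X_\xi^\perp$. Once (a) is established, the identity $P_H=P^{\sigma_H(H)}$ makes (b) and (c) essentially symmetric consequences of Proposition~\ref{propo:char}\ref{char:fss}.
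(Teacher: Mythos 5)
Your proof is correct, and it is essentially the same as the paper's: your polarizer $H$ is literally the same halfspace (your unit normal $w$ is a positive multiple of the paper's $h=R_s(\eta)-R_t(\eta)$), and part (b) is derived from Proposition~\ref{propo:char}\ref{char:fss} in the identical way. The only differences are organizational: for (a) the paper computes $\sigma_H(a+rR_t(\eta))=a+rR_s(\eta)$ directly and then pushes this through the foliated Schwarz decomposition~\eqref{eqn:fssSet}, whereas you instead check that the conjugated map $T=R_{s,\xi}^{-1}\circ\sigma_S\circ R_{t,\xi}$ is a reflection across a hyperplane containing $\mathbb{R}\eta$ and invoke axial symmetry; and for (c) the paper simply applies $P_H\circ\sigma_H=P_H$ (Proposition~\ref{Propo-Obs}(iv)) together with (a) and (b), whereas you pass through the dual polarizer $H'=\sigma_H(H)$ and apply Proposition~\ref{propo:char}\ref{char:fss} a second time. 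Both variants are valid and yield the same conclusions with the same underlying construction.
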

	\begin{proof}
	Let $h=R_s(\eta )-R_t(\eta )$ and consider the polarizer $H:=\bigl\{z\in \mathbb{R}^d: (z-a)\cdot h <0 \bigr\}$. Observe that $a\in \partial H$, and for $r>0$,
	\begin{align*}
	    (a+r\eta -a)\cdot h&=r\eta \cdot [R_s(\eta )-R_t(\eta )] =r(s-t)<0,\\
	    (a+rR_{t}(\eta )-a)\cdot h &= r R_t(\eta )\cdot [R_s(\eta ) -R_t(\eta )] =r[R_s(\eta )\cdot R_t(\eta )-1]<0.
	\end{align*}
	Therefore $H\in \mathscr{H}_{a,\eta }\cap \mathscr{H}_{a,R_t(\eta )}$.
	
	\noindent
	(a) Notice that, $\|h\|=2[1-R_s(\eta )\cdot R_t(\eta )]$.  Now, for $x=a+rR_t(\eta )$, $r>0$ we get
	\begin{align*}
	    \sigma _H(x)&=x-\frac{2(x-a)\cdot h}{\|h\|^2}h=
	    a+rR_t(\eta )-\frac{2r[R_s(\eta )\cdot R_t(\eta )-1]}{2[1-R_s(\eta )\cdot R_t(\eta )]}(R_s(\eta )-R_t(\eta ))= a+rR_s(\eta).
	\end{align*}
	Therefore, $\sigma _H(a+\mathbb{R}^+ R_t(\eta ))=a+\mathbb{R}^+ R_s(\eta )$, and hence from~\eqref{eqn:fssSet} we obtain
	\begin{align*}
	        \sigma _H(\mathscr{O}_t)&=\bigcup_{r\in I_\mathscr{O}}  B_{\rho(r)}(\sigma _H(a+rR_t(\eta )))\cap \partial B_r(\sigma _H(a))
	        =\bigcup_{r\in I_\mathscr{O}}  B_{\rho(r)}(a+rR_s(\eta ))\cap \partial B_r(a)=\mathscr{O}_s.
	\end{align*}
	(b) Since $H\in \mathscr{H}_{a,R_t(\eta )}$ and $\mathscr{O}_t$ is foliated Schwarz symmetric with respect to $a+\mathbb{R}^+R_{t}(\eta )$, Proposition~\ref{propo:char}-\ref{char:fss} implies that $P^H(\mathscr{O}_{t})=\sigma _H (\mathscr{O}_{t}) = \mathscr{O}_{s}$. 
	
	\noindent
	(c) Since $P_H(\sigma _H (\mathscr{O}_{t}))=P_H(\mathscr{O}_{t})$ (Proposition~\ref{Propo-Obs}-(iv)), we get $P_H(\mathscr{O}_s)=P_H(\sigma _H (\mathscr{O}_{t}))=P_H(\mathscr{O}_{t})=\mathscr{O}_{t}$.
	\end{proof}
	\begin{proof}[Proof of Theorem~\ref{thm-circ}] 
	Given $\frac{2d+2}{d+2}<p<\infty $, $\Omega $ and $\mathscr{O}$ are foliated Schwarz symmetric with respect to $a+\mathbb{R}^+\eta $.
	
	\medskip
	\noindent
	\underline{\textbf{The set $\mathrm{C}_\mathscr{O}$ is interval:}} We show that, for  any $s\in C_\mathscr{O}$, the interval $[s,1]\subseteq C_\mathscr{O}$. Let $t\in (s,1]$. From Lemma~\ref{lemma-4.2}-(c) there exists $H\in \mathscr{H}_{a,\eta }$ such that $P_H(\mathscr{O}_{s})=\mathscr{O}_{t}$. Since $H\in \mathscr{H}_{a,\eta }$ and $\Omega $ is foliated Schwarz symmetric with respect to $a+\mathbb{R}^+ \eta $, from Proposition~\ref{propo:char}-\ref{char:fss}, we get $P_H(\Omega )=\Omega $. Now, $\mathscr{O}_s\subset \Omega$ implies that $\mathscr{O}_t= P_H(\mathscr{O}_s)\subset P_H(\Omega )=\Omega $. Therefore, $t\in C_\mathscr{O}$ and hence $[s,1]\subseteq C_\mathscr{O}$.
	
	\medskip
	\noindent 
	\underline{\textbf{Monotonicity of $\gamma _1(\cdot )$}:}
	Let $s<t$ in $\mathrm{C}_\mathscr{O}$. From Lemma~\ref{lemma-4.2}, there exists $H\in \mathscr{H}_{a, \eta }$ such that $P^H(\mathscr{O}_{t}) = \mathscr{O}_{s}$. Since $\Omega $ is foliated Schwarz symmetric with respect to $a+\mathbb{R}^+ \eta $, from Proposition~\ref{propo:char}-\ref{char:fss}, we have $P_H(\Omega )=\Omega $, and from Proposition~\ref{propo:pol-compliment1} we get
	\begin{align*}
		P_H(\Omega \setminus \mathscr{O}_{t})=P_H(\Omega ) \setminus P^H(\mathscr{O}_{t}) = \Omega \setminus \mathscr{O}_{s}.
	\end{align*}
	If $\Omega $ satisfies~\ref{choice-1} then $\Omega =\Omega _0 \setminus \overline{B}_{\rho }(a)$ and $\Gamma _N=\partial B_\rho (a)$. In this case, we set $\Omega _{\rm out}=\Omega _0 \setminus \mathscr{O}_{t}$ and $\Omega _{\rm in}=B_{\rho _0}(a)$ (in Theorem~\ref{thm~1.2}) so that $\Omega _{\rm out}\setminus \overline{\Omega _{\rm in}}=\Omega \setminus \mathscr{O}_t$, $\Gamma _N=\partial \Omega _{\rm in}$ and $\Gamma _D=\partial \Omega _{\rm out}$. Therefore, we have $\nu _1(\Omega _{\rm out}\setminus \overline{\Omega _{\rm in}})=\gamma _1(\Omega \setminus \mathscr{O}_t)$ and $\nu _1(P_H(\Omega _{\rm out}\setminus \overline{\Omega _{\rm in}}))=\gamma _1(\Omega \setminus \mathscr{O}_s)$. Since $\sigma _H(\Omega _{\rm in})=\Omega _{\rm in}$, from Theorem~\ref{thm~1.2}-(i) we get 
	\begin{align*}
	    \gamma _1(\Omega \setminus \mathscr{O}_s)\leq \gamma _1(\Omega \setminus \mathscr{O}_t).
	\end{align*}
	
	\noindent
	Similarly, if $\Omega $ satisfies~\ref{choice-2} then $\Omega = B_R(a)\setminus \overline{\Omega _1}$ and $\Gamma _N=\partial B_R(a)$. In this case, we set $\Omega _{\rm out} = B_R(a)$ and $\Omega _{\rm in} = \Omega _1 \cup \mathscr{O}_{t}$ (in Theorem~\ref{thm~1.2}) so that $\Omega _{\rm out}\setminus \overline{\Omega _{\rm in}}=\Omega \setminus \mathscr{O}_t$, $\Gamma _N=\partial \Omega _{\rm out}$ and $\Gamma _D=\partial \Omega _{\rm in}$. Therefore, we have $\tau _1(\Omega _{\rm out}\setminus \overline{\Omega _{\rm in}})=\gamma _1(\Omega \setminus \mathscr{O}_t)$ and $\tau _1(P_H(\Omega _{\rm out}\setminus \overline{\Omega _{\rm in}}))=\gamma _1(\Omega \setminus \mathscr{O}_s)$. Since $\sigma _H({\Omega _{\rm out}})=\Omega _{\rm out}$, from Theorem~\ref{thm~1.2}-(ii) we get
	\begin{align*}
	    \gamma _1(\Omega \setminus \mathscr{O}_s)\leq \gamma _1(\Omega \setminus \mathscr{O}_t).
	\end{align*}
	
	\noindent
	Since $\Omega $ is not radially symmetric with the center $a$, in both cases, we have $\Omega _{\rm out} \setminus \overline{\Omega_{\rm in}} \neq P_H \left(\Omega _{\rm out} \setminus \overline{\Omega_{\rm in}}\right)\neq \sigma _H \left(\Omega _{\rm out} \setminus \overline{\Omega_{\rm in}} \right)$. Thus, the strict Faber-Krahn type inequality (Theorem~\ref{thm~1.2}-(iii)) implies 
	\begin{align*}
		\gamma _{1}(s)<\gamma _{1}(t).
	\end{align*}
	Therefore, $\gamma _1 (\cdot )$ is strictly increasing on $\mathrm{C}_\mathscr{O}$.
	\end{proof}
	\section{Some remarks and examples}\label{Remarks}

	\begin{example}\label{ex1}
	Let $\Omega \subset \mathbb{R}^2$ is given by $\Omega = \bigl\{(x,y):x^2+y^2<R^2, x\leq 0\bigr\}\cup \big\{(x,y): |x|+|y|<2R, x\geq 0\bigr\}$ for $R>0$, and $\mathscr{O}$ is the rhombus given by $|x|+|y|\leq 2\ell $ with $\ell <R$ (see Figure~\ref{fig:ex1}).	Since $\mathscr{O}$ is Steiner symmetric with respect to the hyperplanes $S_1:=\{(x,y)\in \mathbb{R}^2:x=0\}$ and $S_2:=\{(x,y)\in \mathbb{R}^2: x+y=0\}$, we can consider the translations $\mathscr{O}$ along the $x$-axis, as well as along the straight line $y=x$.
	\begin{figure}[tbh!]
	    \centering
	    \begin{tikzpicture}[scale=0.4]
	        \draw [<->] (-6,0) -- (6,0);
			\draw (6,0) node[anchor=north] {$\scriptstyle e_1$};
	        \node [diamond, fill=gray, draw, minimum width = 4cm, minimum height=4cm] (d) at (0,0){};
	        \begin{scope}
	            \clip (0,0) ellipse (5cm and 5cm);
	            \clip [rotate=90] (-5,0) rectangle (5,5);
	            \fill [rotate=90,color=gray] (-5,0) rectangle (5,5);
	        \end{scope}
			\draw [rotate=45,fill=white] (-0.75,-0.75) rectangle (0.75,0.75);
			\draw [densely dashed,fill=white,xshift=3.9cm,rotate around ={45:(0,0)}] (-0.75,-0.75) rectangle (0.75,0.75);
			\draw [densely dashed,fill=white,xshift=-3.9cm,rotate around ={45:(0,0)}] (-0.75,-0.75) rectangle (0.75,0.75);
			\draw [densely dashed,fill=white,rotate=45,xshift=2.8cm] (-0.75,-0.75) rectangle (0.75,0.75);
			\fill (3.9,0) circle[radius=3pt];
			\draw [densely dashed,-latex] (0,0) -- (3.9,0);
			\draw [densely dashed,-latex] (-3.9,0) -- (-1.5,0);
			\fill [rotate=45] (2.8,0) circle[radius=3pt];
			\draw [rotate=45,densely dashed,-latex] (0,0) -- (2.8,0);
			\fill (-3.9,0) circle[radius=3pt];
			\fill (-1.5,0) circle[radius=3pt];
			\draw (-1.5,0) node[anchor=north] {$s_*$};
			\draw [<->] (0,-6) -- (0,6);
			\fill (0,0) circle[radius=3pt];
			\draw (0,0) node[anchor =east] {$\scriptstyle 0$};
	    \end{tikzpicture}
	    \caption{Example~\ref{ex1}}
	    \label{fig:ex1}
	\end{figure}
	
	\noindent
	Along the $x$-axis, the translations of $\mathscr{O}$ are $\mathscr{O}_s = (s,0)+\mathscr{O} \subset \Omega $ for $|s|<R-\ell $. For $|s|<R-\ell $, let $\lambda _1(s)=\lambda _1(\Omega \setminus \mathscr{O}_s)$, the first Dirichlet eigenvalue of the $p$-Laplacian, for $\frac{2d+2}{d+2}<p<\infty $. Let
	\begin{align*}
	    s_*=\sup \,\Bigl\{s\in (-R+\ell , 0): \sigma _{H_s}(\{(x,y)\in \Omega : x<s\})\subset \Omega \Bigr\}.
	\end{align*}
	Now, applying Theorem~\ref{thm-mono}
	\begin{enumerate}[label=(\roman*)]
	    \item with $h=(1,0)$, we get $\lambda _1(s)$ is strictly decreasing for $s\in [0,R-\ell )$,
	    \item with $h=(-1,0)$, we get $\lambda _1(s)$ is strictly increasing for $s\in (-R+\ell , s_*)$.
	\end{enumerate}
	
	\noindent
	We get similar monotonicity results for the translations of $\mathscr{O}$ along the straight line $y=x$.
	\end{example}
	\begin{remark}\label{ex2}
	In Theorem~\ref{thm-circ}, we can consider the obstacle $\mathscr{O}$ of the form $\mathscr{O}=\bigcup \limits_{j=1}^{k} \overline{B}_{\rho _j} (z_j) \subset \Omega $, a finite union of closed balls, such that the centers $z_j$'s lie on the ray $a+\mathbb{R}^+ \eta $. Now, the rotations of the obstacle $\mathscr{O}$ about the point $a$ are given by 
	\begin{align*}
		\mathscr{O}_s \mathrel{\mathop:}= \bigcup \limits_{j=1}^k \overline{B}_{\rho _j} (a+R_{s}(-a+z_j)) \mbox{ for } s \in [-1,1], \mbox{ and } \mathrm{C}_\mathscr{O} \mathrel{\mathop:}= \Big\{s\in [-1 ,1] : \mathscr{O}_s \subset \Omega \Big\}.
	\end{align*}
	In this case, also, we have the same conclusions as Theorem~\ref{thm-circ}. 
	\end{remark}
	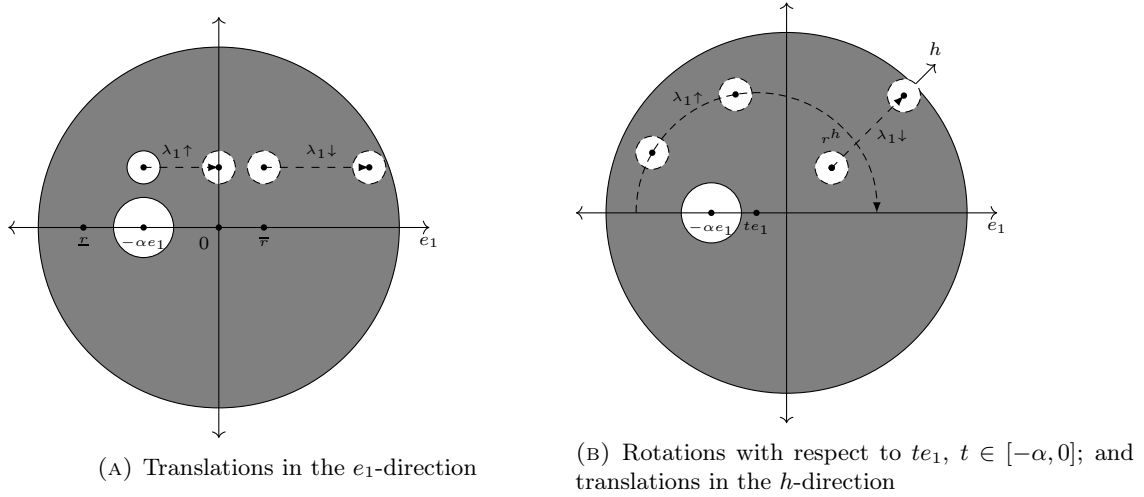
\begin{figure}[tbh!]
	    \centering
	    \begin{subfigure}{0.45\textwidth}
	    \begin{tikzpicture}[scale=0.4]
		\draw [fill=gray] (0,0) circle (6cm);
		\draw [fill=white] (-2.5,0) circle (1cm);
		\draw [fill=white] (-2.5,2) circle (0.55cm);
		\fill (-2.5,2) circle[radius=3pt];
		\draw [densely dashed,fill=white] (0,2) circle (0.55cm);
		\fill (0,2) circle[radius=3pt];
		\draw [densely dashed,fill=white] (1.5,2) circle (0.55cm);
		\fill (1.5,2) circle[radius=3pt];
		\draw [dashed,-latex] (-2.5,2) -- (0,2);
		\draw (-1.4,2) node[anchor=south] {$\scriptscriptstyle \lambda _1 \uparrow $};
		\draw [densely dashed,fill=white] (5,2) circle (0.55cm);
		\fill (5,2) circle[radius=3pt];
		\draw [dashed,-latex] (1.5,2) -- (5,2);
		\draw (3.4,2) node[anchor =south] {$\scriptscriptstyle \lambda _1 \downarrow $};
		\fill (-2.5,0) circle[radius=3pt];
		\draw (-2.5,0) node[anchor =north] {$\scriptscriptstyle -\alpha e_1$};
		\draw [<->] (0,-7) -- (0,7);
		\fill (0,0) circle[radius=3pt];
		\draw (0,0) node[anchor=north east] {$\scriptstyle 0$};
		\fill (1.5,0) circle[radius=3pt];
		\draw (1.5,0) node[anchor =north] {$\scriptscriptstyle \overline{r} $};
		\fill (-4.5,0) circle[radius=3pt];
		\draw (-4.5,0) node[anchor =north] {$\scriptscriptstyle \underline{r}$};
		\draw [<->] (-7,0) -- (7,0);
		\draw (7,0) node[anchor =north] {$\scriptstyle e_1$};
		\end{tikzpicture}
	    \caption{Translations in the $e_1$-direction}
	    \label{fig:subim1}
	    \end{subfigure}
	    \begin{subfigure}{0.45\textwidth}
	    \begin{tikzpicture}[scale=0.4]
		\draw [rotate=45,<->] (0,0) -- (7,0);
		\draw (45:7) node[anchor =south] {$\scriptstyle h$};
		\draw [fill=gray] (0,0) circle (6cm);
		\draw [fill=white] (-2.5,0) circle (1cm);
		\draw [fill=white,xshift=-1cm,densely dashed] (150:4) circle (0.55cm);
		\fill [xshift=-1cm] (150:4) circle[radius=3pt];
		\draw [xshift=-1cm,densely dashed, fill=white] (100:4) circle (0.55cm);
		\fill [xshift=-1cm] (100:4) circle[radius=3pt];
		\draw [-latex, densely dashed,xshift=-1cm] (180:4) arc (180:0:4);
		\draw [densely dashed,fill=white] (1.5,1.5) circle (0.55cm);
		\fill (1.5,1.5) circle[radius=3pt];
		\draw [xshift=-1cm] (125:4) node[anchor=south] {$\scriptscriptstyle \lambda _1 \uparrow $};
		\draw [densely dashed,fill=white] (3.9,3.9) circle (0.55cm);
		\fill (3.9,3.9) circle[radius=3pt];
		\draw [dashed,-latex] (1.5,1.5) -- (3.9,3.9);
		\draw (3.5,2) node[anchor =south] {$\scriptscriptstyle \lambda _1 \downarrow $};
		\fill (-2.5,0) circle[radius=3pt];
		\draw (-2.5,0) node[anchor =north] {$\scriptscriptstyle -\alpha e_1$};
		\draw [<->] (0,-7) -- (0,7);
		\fill (-1,0) circle[radius=3pt];
		\draw (-1,0) node[anchor =north] {$\scriptscriptstyle t e_1$};
		\fill (1.5,1.5) circle[radius=3pt];
		\draw (1.5,2) node[anchor =south] {$\scriptscriptstyle r^h$};
		\draw [<->] (-7,0) -- (7,0);
		\draw (7,0) node[anchor =north] {$\scriptstyle e_1$};
	\end{tikzpicture}
	    \caption{Rotations with respect to $t e_1$, $t\in [-\alpha ,0]$; and translations in the $h$-direction}
	    \label{fig:subim2}
	    \end{subfigure}
	    \caption{An eccentric annular domain with a hole.}
	    \label{fig-ecc}
	\end{figure}
	\noindent
	\subsection{An eccentric annular domain with a spherical hole}\label{ecc_ann}
	Let $\frac{2d+2}{d+2}<p<\infty $. For given $0<r<R$ and $0\leq \alpha <R-r,$	we consider an eccentric annular domain $\Omega = B_R(0)\setminus \overline{B_r}(-\alpha e_1)\subset \mathbb{R}^d$ (see Figure~\ref{fig-ecc}). Let $\rho >0$ be such that $\overline{B}_{\rho }(y)\subset \Omega$ for some $y\in \mathbb{R}^d .$ Now define
	\begin{align*}
	    \Omega _\rho :=\Bigl\{y\in \Omega : \overline{B}_{\rho }(y)\subset \Omega \Bigr\}, \mbox{ and } 
	    \lambda _1(y):=\lambda _1\left(\Omega \setminus \overline{B}_{\rho }(y)\right) \mbox{ for } y\in \Omega _\rho .
	\end{align*}
	We want to study the behaviour of $\lambda _1 (\cdot )$ on $\Omega _\rho $. It is easy to observe that
	\begin{enumerate}[label=(\alph*)]
	    \setcounter{enumi}{0}
	    \item $\overline{B}_{\rho }(y)$ is Steiner symmetric with respect to any affine-hyperplane through $x$; 
	    \item $\overline{B}_\rho (y)$ is foliated Schwarz symmetric with respect to $a+\mathbb{R}^+ (y-a)$ for any $a\in \mathbb{R}^d$;
	    \item $\Omega $ is foliated Schwarz symmetric with respect to $t e_1+\mathbb{R}^+ e_1$ for $t\in [-\alpha ,0]$;
	    \item the sets  $\{x\in \Omega : x_1<\underline{r} \}\bigcup \sigma _{\scriptscriptstyle H_{\underline{r}}}(\{x\in \Omega : x_1<\underline{r} \})$ and  $\{x\in \Omega : x_1>\overline{r}\}\bigcup \sigma _{\scriptscriptstyle H_{\overline{r}}}(\{x\in \Omega : x_1>\overline{r}\})$ are convex in the $e_1$-direction, where $\overline{r}=\frac{R+r-\alpha }{2}$, $\underline{r}=-\frac{R+r+\alpha }{2}$.
	\end{enumerate}
	For $y\in \mathbb{R}^d$, we write $y=(s,z)\in \mathbb{R}\times   \mathbb{R}^{d-1}.$ Now for a given $ z\in \mathbb{R}^{d-1} , \beta >0,$ we consider the sets
	\begin{align*}
	    &\mathrm{L}_{z}:=\Bigl\{s\in (-R,R): (s,z)\in \Omega _\rho \Bigr\};\\
        &\mathrm{S}_{\beta }(t e_1):=\Omega _\rho \cap \partial B_\beta (t e_1),\quad t\in [-\alpha ,0].
    \end{align*}
    \begin{remark} \label{remark1}
    Let $(s,z_1) \in \Omega_\rho $. Then we have the following:
	\begin{enumerate}[label=(\roman*)]
	   	     \item Using  the axial symmetry of $\Omega$, we obtain $\lambda _1(s,z)=\lambda _1(s,z_1),$ for $z\in \mathbb{R}^{d-1}$ such that $(s,z)\in \Omega_\rho$ and $|z|=|z_1|.$
	        \item From (a), (d) and  Theorem~\ref{thm-mono} with $h=-e_1$ (and $h=e_1$) , we get
	        \begin{align*}
	            \lambda _1(\cdot ,z_1) \mbox{ is strictly increasing on } \mathrm{L}_{z_1} \bigcap \left.\left(-\sqrt{(R-\rho )^2-|z_1| ^2}, \underline{r}\right.\right],
	        \end{align*}
	        and 
	        \begin{align*}
	            \lambda _1(\cdot , z_1) \mbox{ is strictly decreasing on } \mathrm{L}_{z_1}\bigcap \left[\left.\overline{r}, \sqrt{(R-\rho )^2-|z_1| ^2} \right)\right. .
	        \end{align*}
	        \item If $(s_1,z_1)$ and $(s_2,z_2)\in \Omega_\rho$  such that $s_1< s_2$ and 
	        $(s_1-t)^2+|z_1|^2=(s_2-t)^2+|z_2|^2$ for some $t\in [-\alpha,0]$, then by Theorem~\ref{thm-circ}, we get
	        \begin{align*}
	          \lambda_1(s_1,z_1)<\lambda_1(s_2,z_2).
	        \end{align*}
	        In particular,  for $s_1,s_2\in [-\alpha , 0]$ with $s_1<s_2$, by taking $t=\frac{s_1+s_2}{2}$ we obtain
	        \begin{align*}
	            \lambda _1(\cdot ,z) \mbox{ is strictly increasing on } \mathrm{L}_{z }\cap [-\alpha , 0].
	        \end{align*}
	\end{enumerate}
	\end{remark}
    \begin{remark}
	   In general, for any $h=(h_1,h')\in \mathbb{S}^{d-1},$ let 
	   \begin{align*}
	       r^h:=\frac{\sqrt{R^2-\alpha ^2 |h'|^2}+r-\alpha h_1}{2}.
	   \end{align*}
	   Then, the set $\{x\in \Omega : x\cdot h>r^h\}\bigcup \sigma _{\scriptscriptstyle H_{r_h}}(\{x\in \Omega : x\cdot h>r^h\})$ is convex in the $h$-direction. For  $y\in \Omega _\rho $, define $\mathrm{L}_{y}=\Bigl\{s\in [0,R) : y+s h \in \Omega _\rho \Bigr\}$. Now, from (a) and Theorem~\ref{thm-mono} we get
	   \begin{align*}
	       \lambda _1(y+s h) \mbox{ is strictly decreasing for } s\in \mathrm{L}_y \cap [r^h, R).
	   \end{align*}
	   \end{remark}
	   \begin{remark}
	   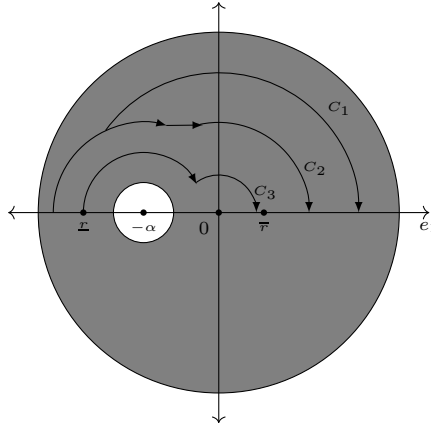
\begin{figure}[hbt!]
	    \centering 
	    \begin{tikzpicture}[scale=0.4]
		\draw [fill=gray] (0,0) circle (6cm);
		\draw [fill=white] (-2.5,0) circle (1cm);
		\draw [xshift=-2.5cm,-latex] (180:2) arc (180:28:2);
		\draw [-latex] (127:1.25) arc (127:0:1.25);
		\draw (3.2,2) node[anchor =north] {$\scriptscriptstyle C_2$};
		\draw [xshift=-2.5cm,-latex] (180:3) arc (180:75:3);
		\draw [-latex, xshift=-2.52cm] (75:3)--(2,2.912);
		\draw [-latex] (102:3) arc (102:0:3);
		\draw [-latex](144:4.65) arc (144:0:4.65);
		\draw (4,4) node[anchor =north] {$\scriptscriptstyle C_1$};
		\draw (1.55,1.2) node[anchor =north] {$\scriptscriptstyle C_3$};
		\fill (-2.5,0) circle[radius=3pt];
		\draw (-2.5,-0) node[anchor =north] {$\scriptscriptstyle -\alpha $};
		\draw [<->] (0,-7) -- (0,7);
		\fill (0,0) circle[radius=3pt];
		\draw (0,0) node[anchor=north east] {$\scriptstyle 0$};
		\fill (1.5,0) circle[radius=3pt];
		\draw (1.5,0) node[anchor =north] {$\scriptscriptstyle \overline{r} $};
		\fill (-4.5,0) circle[radius=3pt];
		\draw (-4.5,0) node[anchor =north] {$\scriptscriptstyle \underline{r}$};
		\draw [<->] (-7,0) -- (7,0);
		\draw (7,0) node[anchor =north] {$\scriptstyle e_1$};
	\end{tikzpicture}
	\caption{Monotonicity along certain paths in $\Omega $.}
	\label{fig-ecc1}
	\end{figure}
	   Let $C:(-R,R)\longrightarrow \Omega $ be a continuous path in $\Omega $ such that (see Figure~\ref{fig-ecc1}):
	   \begin{itemize}
	       \item on $(-R, -\alpha )$, $C$ is a circular arc centered at $t_1 e_1$ with $t_1\in [-\alpha ,0]$;
	       \item on $[-\alpha ,0]$, $C$ is either a circular arc centered at $t_2 e_1$ with $t_2\in [-\alpha ,0]$ or a line segment parallel to the $e_1$-axis;
	       \item on $(0, R)$, $C$ is a circular arc centered at $t_3 e_1$ with $t_3\in [-\alpha ,0]$.
	   \end{itemize}
	   Now, from Remark \ref{remark1}, we see that $\lambda _1(\cdot )$ is strictly increasing along the path $C$; i.e., for any $(s_1,z_1), (s_2,z_2)\in C$ with $s_1<s_2$ we have $\lambda _1(s_1,z_1) < \lambda _1(s_2, z_2).$
	\end{remark}
    \begin{remark}[\textbf{Optimal placement of the obstacle}]
    For $y\in \Omega _\rho $, we have $y, |y|e_1 \in \mathrm{S}_{|y|}(0)$. If $y_1 < |y|$,  then  using  Theorem~\ref{thm-circ} we obtain
    $\lambda _1(y)<\lambda (|y|e_1)$. Thus
    \begin{align*}
        \sup \left\{\lambda _1(y): y\in \Omega _\rho \right\}= \sup \left\{\lambda _1(s,0): s\in \mathrm{L}_{0}\cap [0,\overline{r}] \right\}.
    \end{align*}
    If $0<\rho <\alpha -r$, then $0, \overline{r} \in \mathrm{L}_0$, and hence by (iii) of Remark \ref{remark1} we get
    \begin{align*}
        \sup \left\{\lambda _1(y): y\in \Omega _\rho \right\} &=\max \left\{\lambda _1(s,0): s\in [0,\overline{r}] \right\}.
    \end{align*}
    On the other hand, if  $\alpha \leq r$ or $\rho >\alpha -r$, then $0\notin \mathrm{L}_0$. Thus, the above arguments fail to conclude that the supremum is attained in $\Omega_\rho.$ However, from a \emph{Mathematica~12} plot of $\lambda _1(\cdot )$ on $[0,R)\cap L_0$ (for the various values of $\alpha, r$, and $\rho$), we observed that the maximum is attained at a unique point in $(0,\overline{r})\cap L_0$. Giving an analytic explanation of this behaviour of $\lambda_1 (\cdot,0)$ in  $[0,\overline{r}]\cap \mathrm{L}_{0}$ seems to be  an interesting  problem to explore. 
    \end{remark}

    \subsection{\bf The symmetries of the first eigenfunctions:}
    In this subsection, we take $p\in(1,\infty)$ and $\Omega _{\rm out}\setminus \overline{\Omega _{\rm in}} \subset \mathbb{R}^d$ is a domain as given in~\ref{hypothesis}. We establish that the  first eigenfunctions of \eqref{eigen1} and \eqref{eigen2-1} inherit some of the symmetries of the underlying domains. 
    
    \begin{remark}\label{rmk-1.5}
        Let $H\in \mathscr{H}_{\rm ad}$ be such that $P_H(\Omega _{\rm out}\setminus \overline{\Omega _{\rm in}})=\Omega _{\rm out}\setminus \overline{\Omega _{\rm in}}$. 
	    \begin{enumerate}[label=(\roman*)]
	        \item Let $u$ be an eigenfunction corresponding to the first eigenvalue $\nu _1(\Omega _{\rm out}\setminus \overline{\Omega _{\rm in}})$ of~\eqref{eigen1}. Assume that  $\sigma _H(\Omega _{\rm in})=\Omega _{\rm in}$. If $u$ is positive, then from Proposition~\ref{propo:pol_Lip1} and \eqref{eqn:pol_norm}, we see that $P_H(u)$ is also an eigenfunction corresponding to $\nu _1(\Omega _{\rm out}\setminus \overline{\Omega _{\rm in}})$. Since the norms of $u$ and $P_H(u)$ are same, by the simplicity of $\nu _1$, we get $P_H(u)=u.$  If $u$ is negative then, we get $P^H(u)=-P_H(-u)=u$.
	        \item Similarly, if $\sigma _H(\Omega _{\rm out})=\Omega _{\rm out}$ and $v$ is a positive eigenfunction corresponding to the first eigenvalue $\tau _1(\Omega _{\rm out}\setminus \overline{\Omega _{\rm in}})$ of~\eqref{eigen2}, then $P_H(v)=v$  and $P^H(-v)=-v$.
	    \end{enumerate}
	\end{remark}
	\begin{definition}
  Let $\Omega$ be foliated  Schwarz symmetric with respect to the ray $a+\mathbb{R}^+\eta $.	Then a function $u:\Omega\longrightarrow \mathbb{R}$ is said to  be foliated Schwarz symmetric with respect to the same ray, if $P_H(u)=u$ for every $H\in \mathscr{H}_{a,\eta }$ (see~\cite[Lemma~6.3]{BrockSolynin2000} and~\cite[Section~3]{VanJeanWillem08}).
	\end{definition}
    \begin{remark}
        Let $\Omega _{\rm out}\setminus \overline{\Omega _{\rm in}}$ be foliated Schwarz symmetric with respect to the ray $a+\mathbb{R}^+\eta $ for some $a\in \mathbb{R}^d$ and $\eta \in \mathbb{S}^{d-1}$. Then, we have $P_H(\Omega _{\rm out}\setminus \overline{\Omega _{\rm in}})=\Omega _{\rm out}\setminus \overline{\Omega _{\rm in}}$ for every $H\in \mathscr{H}_{a,\eta }$. 
        \begin{enumerate}[label=(\roman*)]
            \item  Assume that $\Omega _{\rm in}=B_r(a)$ for some $r\geq 0$. Then $\sigma _H(\Omega _{\rm in})=\Omega _{\rm in}$ for every $H\in \mathcal{H}_{a,\eta }$. Hence, for any positive eigenfunction $u$ corresponding to $\nu _1(\Omega _{\rm out}\setminus \overline{\Omega _{\rm in}})$, from Remark~\ref{rmk-1.5}, we obtain $P_H(u)=u$ for every $H\in \mathcal{H}_{a,\eta }.$ Thus, $u$ is foliated Schwarz symmetric with respect to the ray $a+\mathbb{R}^+ \eta .$
            \medskip
            \item Similarly, Assume that $\Omega _{\rm out}=B_R(a)$ for some $R>0$. Then, any positive eigenfunction $u$ corresponding to the first eigenvalue $\tau_1(\Omega _{\rm out}\setminus \overline{\Omega _{\rm in}})$ of~\eqref{eigen2} is foliated Schwarz symmetric with respect to  $a+\mathbb{R}^+\eta $.  
        \end{enumerate}
    \end{remark}
    \begin{remark}
        Let $\Omega = B_R(0)\setminus \overline{B}_r(-\alpha e_1)\subset \mathbb{R}^d$ be the eccentric annular domain as given in Subsection \ref{ecc_ann}. Then $\Omega$ is foliated Schwarz symmetric with respect to $t e_1+\mathbb{R}^+ e_1$ for $t\in [-\alpha ,0]$. 
        \begin{enumerate}[label=(\roman*)]
            \item If, we take $\Omega _{\rm out}=\Omega $ and $\Omega _{\rm in}=\emptyset $, then any positive eigenfunction corresponding to the first Dirichlet eigenvalue $\lambda _1(\Omega )$ is foliated Schwarz symmetric with respect to $t e_1+\mathbb{R}^+ e_1$ for every $t\in [-\alpha ,0]$.
            \item If, we take  $\Omega _{\rm out}=B_R(0)$ and $\Omega _{\rm in}=B_r(-\alpha e_1)$, then 
            \begin{itemize}
                \item any positive eigenfunction corresponding to the first eigenvalue $\nu _1(\Omega )$ of~\eqref{eigen1} is foliated Schwarz symmetric with respect to the ray $-\alpha e_1 +\mathbb{R}^+ e_1$;
                \item any positive eigenfunction corresponding to the first eigenvalue $\tau _1(\Omega )$ of~\eqref{eigen2} is foliated Schwarz symmetric with respect to the ray $\mathbb{R}^+ e_1$.
            \end{itemize}
        \end{enumerate}
    \end{remark}
   
    \bibliography{refs.bib}
    \bibliographystyle{abbrvnat}
\end{document}